\algrenewcommand\algorithmicrequire{\makebox[32pt][l]{\textrm{input}}}
\algrenewcommand\algorithmicensure{\makebox[32pt][l]{\textrm{output}}}
\algrenewcommand\algorithmicfunction{\textrm{function}}
\algrenewcommand\algorithmicwhile{\textrm{while}}
\algrenewcommand\algorithmicdo{}
\algrenewcommand\algorithmicend{\textrm{end}}
\algrenewcommand\algorithmicforall{\textrm{for all}}
\algrenewcommand\algorithmicfor{\textrm{for}}
\algrenewcommand\algorithmicrepeat{\textrm{repeat}}
\algrenewcommand\algorithmicuntil{\textrm{until}}
\DeclareFontFamily{U}{mathx}{\hyphenchar\font45}
\DeclareFontShape{U}{mathx}{m}{n}{
      <5> <6> <7> <8> <9> <10>
      <10.95> <12> <14.4> <17.28> <20.74> <24.88>
      mathx10
      }{}
\DeclareSymbolFont{mathx}{U}{mathx}{m}{n}
\DeclareMathSymbol{\bigtimes}{1}{mathx}{"91}
\theoremstyle{plain}
\newtheorem{theorem}{Theorem}[section]
\newtheorem{lemma}[theorem]{Lemma}
\newtheorem{proposition}[theorem]{Proposition}
\newtheorem{remark}[theorem]{Remark}
\theoremstyle{definition}
\numberwithin{equation}{section}
\newcommand{\N}{\mathds{N}}
\newcommand{\R}{\mathds{R}}
\newcommand{\Z}{\mathds{Z}}
\DeclareMathOperator{\supp}{supp}
\DeclareMathOperator*{\argmin}{arg\,min}
\DeclareMathOperator{\ops}{ops}
\newcommand{\id}{{\rm I}}
\newcommand{\spl}[1]{\ell_{#1}}
\newcommand{\be}{\begin{equation}}
\newcommand{\ee}{\end{equation}}
\newcommand{\Tcal}{{\mathcal{T}}}
\newcommand{\Hcal}{H} 
\newcommand{\Rcal}{{\mathcal{R}}}
\newcommand{\Scal}{{\mathcal{S}}}
\newcommand{\Ical}{{\mathcal{I}}}
\newcommand{\Acal}{{\mathcal{A}}}
\newcommand{\Fcal}{{\mathcal{F}}}
\DeclareMathOperator{\rank}{rank}
\DeclareMathOperator{\apply}{\textsc{apply}}
\DeclareMathOperator{\coarsen}{\textsc{coarsen}}
\DeclareMathOperator{\rhs}{\textsc{rhs}}
\DeclareMathOperator{\recompress}{\textsc{recompress}}
\DeclareMathOperator{\solve}{\textsc{solve}}
\providecommand{\abs}[1]{\lvert#1\rvert}
\providecommand{\bigabs}[1]{\bigl\lvert#1\bigr\rvert}
\providecommand{\norm}[1]{\lVert#1\rVert}
\providecommand{\Bignorm}[1]{\Bigl\lVert#1\Bigr\rVert}
\def\bu{{\bf u}}
\def\bU{{\bf U}}
\newcommand{\bv}{{\bf v}}
\newcommand{\bw}{{\bf w}}
\def\e2{\spl{2}(\nabla^d)}
\newcommand{\cA}{\mathcal{A}}
\def\ga{\gamma}
\newcommand{\As}{{\Acal^s}}
\newcommand{\AH}[1]{{\Acal_{\mathcal{H}}({#1})}}
\newcommand{\rr}[1]{{\mathsf{#1}}}
\newcommand{\Restr}[1]{\operatorname{R}_{#1}}
\newcommand{\beqn}{\begin{equation}}
\newcommand{\eeqn}{\end{equation}}
\newcommand{\bA}{\mathbf{A}}
\newcommand{\bM}{\mathbf{M}}
\newcommand{\bT}{\mathbf{T}}
\newcommand{\bbf}{\mathbf{f}}
\newcommand{\bB}{\mathbf{B}}
\newcommand{\bS}{\mathbf{S}}
\newcommand{\tbS}{{\tilde{\mathbf{S}}}}
\newcommand{\tbT}{{\tilde{\mathbf{T}}}}
\renewcommand{\e}{\varepsilon}
\newcommand{\cU}{{U}}
\newcommand{\br}{\mathbf{r}}
\newcommand{\T}{\mathbb{T}}
\newcommand{\tripnorm}[1]{|\!|\!| #1|\!|\!|}
\newcommand{\HH}{{\mathrm H}}
\newcommand{\bgamma}{\mbox{\textbf{$\gamma$}}}
\newcommand{\uc}{\mathbf{u}}
\newcommand{\mfH}{\mathfrak{H}}
\newcommand{\mfS}{\mathfrak{S}}
\newcommand{\mfR}{\mathfrak{R}}
\newcommand{\pdom}{Y}
\newcommand{\sidx}{\mathcal{S}}
\newcommand{\pidx}{\mathcal{F}}
\newcommand{\rs}{{\rm x}}
\newcommand{\rp}{{\rm y}}
\newcommand{\cI}{\mathcal{I}}
\newcommand{\cH}{H}
\newcommand{\cF}{\mathcal{F}}
\def\Chi{\raise .3ex
\hbox{\large $\chi$}}
\title[Adaptive Low-Rank Approximations for Operator Equations]{Adaptive Low-Rank Approximations for Operator Equations: Accuracy Control and  Computational Complexity}
\author{Markus Bachmayr}
\address{Johannes Gutenberg-Universit\"at Mainz, Institut f\"ur Mathematik, Staudingerweg 9
55128 Mainz, Germany}
\curraddr{}
\email{bachmayr@uni-mainz.de}
\thanks{}
\author{Wolfgang Dahmen}
\address{University of South Carolina, Mathematics Department, LeConte College, 1523 Greene Street, Columbia, SC 29208, USA}
\curraddr{}
\email{wolfgang.anton.dahmen@googlemail.com}
\thanks{%
  This research was supported in part by
  the  NSF Grant    DMS 1720297,
  and by the SmartState and Williams-Hedberg Foundation.
  }
  \subjclass[2010]{Primary  41A46, 41A63, 42C10, 65F08, 65L70, 65J10, 65N12, 65N15}
\date{\today}
\begin{document}

\maketitle

\begin{abstract}
The challenge of mastering computational tasks of enormous size tends to frequently override
questioning the quality of the numerical outcome in terms of {\em accuracy}.
By this we do not mean the accuracy within the discrete setting, which itself may  also be far from evident for ill-conditioned problems
or when iterative solvers are involved. By {\em accuracy-controlled computation} we mean the deviation of the 
numerical approximation from the exact solution of an underlying continuous problem in a relevant metric,
which has been the initiating interest in the first place. Can the accuracy of a numerical result be rigorously certified -- a question 
that is particularly important in the context of uncertainty quantification, when many possible sources of uncertainties interact.
This is the guiding question throughout this article, which reviews recent developments of low-rank approximation methods
for problems in high spatial dimensions. In particular, we highlight the role of {\em adaptivity} when dealing with such strongly
nonlinear methods that integrate in a natural way issues of  discrete and continuous accuracy.
\end{abstract}

\medskip
\noindent \textbf{Keywords:} Nonlinear approximation, Tensor formats, low-rank approximation, hard and soft thresholding, high-dimensional 
diffusion equations, parametric PDEs, approximation classes, a posteriori error bounds, convergence and complexity.

\section{Introduction}

\subsection{Background}\label{ssec:1.1}
Quantifiable approximation, recovery, estimation of functions of a very large  and even infinite number of variables pose enormous challenges
in numerous application contexts of high current interest. 
 The discussion
in  this article is guided  by two sources of high (spatial) dimensionality, namely 

(I) partial differential equations (PDEs) in high dimensional phase space, and

(II) families of PDEs depending on a large number of {\em parameters} which could arise as design parameters or stem from parametrizing
random coefficient fields.

The electronic Schr\"odinger  equation for $N$ particles or
Fokker-Planck equations are typical representatives for (I). Both contain a second order diffusion operator as highest order term, which explains
the interest in considering {\em high dimensional} diffusion equations on  a product domain as a first model class to be studied. 
While from an analytical point of view this is, in principle, a very well understood problem, the focus here is on the spatial dimension  $d$ being large, even several hundreds or thousands.

Regarding (II), an intensely studied problem class concerns {\em parameter dependent} families of  operator equations %
\be
\label{modelpar}
 {R}(u;p)= 0
\quad \mbox{in}\quad\Omega,\quad u|_{\partial \Omega}=0,\quad p\in \mathcal{P},
\ee
where this time $\Omega\subset \R^m$ is a ``low-dimensional'' domain, typically with $m\in \{1,2,3\}$, but the operator depends 
on a parameter $p$ that may range over a high-dimensional (or even infinite-dimensional) parameter domain $\mathcal{P}$.
In an optimal control context, $\mathcal{P}$ can represent a set of design parameters. Another important instance of this type of problems arises when $p$ is actually a {\em random field} over some probability space modelling 
highly complex or micro-structured fields, such as permeability in a porous media flow. Expanding such a random field,
e.g., as a {\em Karhunen-Lo\`eve expansion}, one arrives at a representation of $p$ in terms of {\em parameter sequences}
$y= (y_1,y_2,\ldots) \in \pdom:= [-1,1]^\cI$ where in general $\cI=\N$.  Evaluating  $u(y)=u(p(y))$ for many parameter queries, computing
quantities of interest of the states $u(y)$, recovering such states from given observations, or estimating the underlying parameters are
typical tasks in the context of {\em uncertainty quantification}.

The common challenge in both problem scenarios (I) and (II) lies in recovering or approximating functions of a large or even infinite number of variables.
Classical numerical concepts based on (local) mesh refinements are of very limited use since they typically suffer from the {\em curse of dimensionality}, which roughly means
that numerical costs grow exponentially with the spatial dimension. The perhaps most promising remedy is to exploit some intrinsic
{\em sparsity} of solutions with respect to {\em a priori unknown} dictionaries or expansion systems. Technically this amounts 
to dealing with approximants that are parametrized in a typically rather nonlinear fashion, see Section \ref{ssec:nonlin}.
Such a strategy is indirectly taken up by the following quite common approach to such spatially high-dimensional problems that has been lately attracting significant attention.
One  starts from a (usually fixed) standard finite difference or finite 
element discretization, which initially gives rise to a discrete system of equations of enormous size. 
Numerical tractability is then achieved by solving the (fixed) discrete problem {\em approximately} restricting approximants
to a low-rank tensor format, see e.g. \cite{BNZ:14,KO:10,KressnerTobler:11} and the comments in Section \ref{ssec:nonlin}. The choice of the initial spatial resolution and the tensor ranks
is usually based on an educated guess with little or no provision in the solver to be updated according to some target 
tolerances.
So to speak as a tribute to the problem complexity, one is  tacitly content with controlling the accuracy 
of the {\em discrete approximate} solution with respect to the {\em  exact} solution of the {\em discrete problem} -- {\em discrete accuracy} -- but {\em not} with respect to the actual solution of 
the underlying continuous problem in a problem relevant metric. In stark contrast, in the sequel {\em accuracy} or {\em error} control will   always be understood 
in this latter sense with reference to the solution of the original {\em continuous} problem.    
In fact,  the predictive power of models like (I) or (II) 
depends among other things on the ability to quantify  this notion of accuracy which is the central theme in this article.
 
 Corresponding 
 concepts for low-rank or tensor approximations discussed in this article  target two
 central aims: (a) developing methods with a rigorous  quantification of  accuracy and (b) understanding
 how the entailed numerical complexity scales with increasing accuracy (with respect to the continuous solution). In fact, it is (b) that allows one
 to determine in the end whether a certain solver methodology is actually appropriate or whether numerical efficiency has  been
 compromised at the expense of any meaningful    accuracy.
 
It is important to note that a favorable answer to (b) is tantamount to saying that solutions to the continuous problem are ``well
 approximated by low-rank or tensor expressions''.  Such basic approximability properties are  
 discussed in Section \ref{sec:approximability} for both scenarios (I) and (II) to formulate benchmarks for the performance 
 of solvers.
 Devising numerical schemes that are able to best exploit these approximability properties  
 requires a proper balancing of both
 error sources, namely keeping ranks finite and discretizing corresponding tensor factors. Ideally, ranks and low-dimensional discretizations should evolve in a completely intertwined fashion,
 which is a highly nonlinear process. Insisting on  {\em error controlled}  realization of such processes is the overarching objective of subsequent discussions. A central message is that
 this is only possible by respecting and exploiting characteristics of the continuous problem, such as intrinsic metrics, which strongly link
 the discrete and continuous setting.  The standard paradigm ``first discretize, then analyze''  is thus turned around
 in that computation is ``pretended'' to take place in the {\em infinite} dimensional context. In fact, a convergent iteration formulated in the infinite dimensional setting is shown to remain convergent when executed only approximately within suitable accuracy tolerances.
 Discretizations are therefore never fixed beforehand, but adapt at any given stage of the ``outer iteration''  to such dynamic tolerances.

Therefore, it is important to note that {\em a priori estimates} alone are not quite sufficient 
  since they often involve unknown quantities or are valid under assumptions that may not hold or are
 impossible to check. Instead, computable rigorous  {\em a posteriori} error control is a key element of the proposed approach which must
 exploit the structure of the continuous problem.
This can be carried out most conveniently for those instances of (I) and (II) where, for a proper choice of a Hilbert space $V$,
 corresponding {\em weak formulations}
\be
\label{weak}
a(u,v) =f(v),\quad v\in V,
\ee
are well-posed. This means
  that the bilinear form $a(\cdot,\cdot):V\times V\to \R$ is  symmetric, bounded and coercive, i.e., \eqref{weak} is
{\em $V$-elliptic}. $a(\cdot,\cdot)$   therefore defines an equivalent scalar product
on $V$, which means   that the operator $A:V\to V'$, defined by $(A w)(v)= a(w,v)$, $w,v\in V$, is boundedly invertible. 

One reason for discussing scenario (I) in comparison to (II) is that the respective {\em energy spaces} $V$ differ 
in an essential way, which will be seen to affect the numerical methods in an equally essential way. Nevertheless, the common ground
for both (I) and (II) is the ellipticity of $a(\cdot,\cdot)$ in \eqref{weak}, or equivalently, that the induced mapping $A:V\to V'$ is an isomorphism.
This means
 there exist constants $0<c_a\le C_a<\infty$ such that 
\be
\label{ellipA}
c_a\|w\|_V \le \|A w\|_{V'}\le C_a\|w\|_V,\quad w\in V.
\ee
Since $A(u-w)= f- Aw$ this implies in particular, that
\be
\label{res}
c_a\|u-w\|_V \le \|f- Aw\|_{V'} \le C_a\|u-w\|_V,\quad w\in V,
\ee
i.e., errors in the trial norm are equivalent to the dual norm of corresponding residuals. It is to be stressed that ``residual'' stands here
for the defect in the {\em infinite-dimensional} setting not within a fixed finite-dimensional discretization.

Exploiting ellipticity through such error-residual relations is a corner stone of the proposed approach.
 Of course, this is in principle also the starting point of adaptive finite element techniques in low dimensions. One then uses duality arguments
to derive sharp computatble approximations to $ \|f- Aw\|_{V'}$. Since these techniques rely crucially on {\em localization principles}
which are natural in a finite element framework, they are,  being fully subject
to the curse of dimensionality, in this form {\em infeasible} in high-dimensional regimes. In particular, any type of mesh would be meaningless in scenario (II) where the number of 
relevant ``activated'' parameters may depend on the target accuracy and not be known beforehand.

Therefore, we will exploit \eqref{res} in a different way  inspired by
adaptive wavelet methods \cite{Cohen:01,Cohen:02}. It hinges on first identifying a {\em Riesz basis} for the energy space $V$.
This allows one to transform \eqref{weak} into an {\em equivalent infinite-dimensional} system of linear equations where
the matrix representation $\bA$ of the operator $A$ is now an {\em isomorphism} from the space of square-summable sequences onto \emph{itself},
rather than mapping $V$ onto a {\em different,} less smooth space $V'$.   Thus, errors are measured now in the same (Euclidean) norm
as residuals.  This transformation 
``preconditions'' the problem already on the infinite-dimensional level.  An error-controlled approximation of residuals then reduces primarily 
to an adaptive error-controlled approximate  application of the matrix $\bA$ within a given low-rank or tensor format, fully intertwining
low-rank approximability and spatial sparsity of low-dimensional tensor factors.
 
 Corresponding computational realizations build essentially on recent important developments of tensor calculus, especially,
 for hierarchical tensor formats \cite{BSU:18,Ballani:13,Falco:10,Kolda:09,Grasedyck:13,Grasedyck:10,Hackbusch:09-1,Hackbusch:12,Oseledets:09,Oseledets:11}.
  As previously indicated, a price for rendering high-dimensional problems practically tractable is to employ non-standard parametrization
 formats for approximants which naturally complicates numerical processing. The following section attempts to put this into
 a perspective which is relevant for the remainder of the discussion.

\subsection{Nonlinear Approximation and Parametrization Formats}\label{ssec:nonlin}

A numerical approximation of a function $u \colon \Omega \to \R$, with $\Omega \subset \R^d$, can be regarded as an algorithmic template for a computable substitute that can be parametrized to resemble $u$. In the most classical scenario $u$ is supposed to be an element of a Banach space $X$ endowed with a norm $\|\cdot\|_X$, and one looks for increasingly better approximations to $u$ from a {\em preselected} sequence
$X_N\subset X$ of $N$-dimensional linear subspaces spanned by computationally accessible basis functions $\{\varphi_1,\varphi_2,\ldots,
\varphi_N\}$
such as polynomials, splines, or finite elements. Thus, 
 for each $N\in\N$, one seeks a computational prescription 
\[ 
\Phi_{\rm lin}(x;N, \uc_1, \ldots,\uc_N) := \sum_{i=1}^N \uc_i \varphi_i(x)
\]
 parametrized by the coefficient vector $\mathbf{u}=(\uc_1,\ldots,\uc_N)^T$   which needs to be adjusted to yield small  $\|\Phi_{\rm lin}(\cdot;N, \bu) - u\|_X$. This is also termed \emph{linear} approximation, since the approximant is sought in the linear subspace spanned by 
 a {\em preselected} set of   $N$ basis functions.

 When the spaces $X_n$ are nested $X_n\subset X_{n+1}$ it is possible to construct an {\em infinite collection} $\{\varphi_1,\varphi_2,\ldots\}$
 such that each $X_N$ is spanned by the first $N$ elements of this collection  and the whole collection forms a {\em basis} for $X$ in an appropriate
 sense, depending on the nature of the space $X$. For instance, {\em hierarchical} or {\em wavelet bases} fall into this category covering, in particular, the finite element setting.
 
 This latter point of view is useful as it offers a convenient unifying framework for {\em adapting} an approximation procedure to each specific instance of approximants.
With additional parameters $\lambda_1,\ldots,\lambda_N \in \N$, one may now consider algorithmic templates of the form
\[ 
\Phi_{\rm nonlin}(x; N, \uc_1, \ldots,\uc_N, \lambda_1,\ldots,\lambda_N):=\sum_{i=1}^N \uc_i \varphi_{\lambda_i}(x),
\] 
that allow one to pick  those basis functions that are best suited to approximate the target $u$ within a given budget $N$. Now approximations
are generated from the {\em nonlinear sets}
\begin{equation}\label{eq:sigmadef}
\Sigma_N := \bigcup_{\Lambda \subset \N, \#\Lambda = N} \Bigl\{\sum_{\lambda\in\Lambda} \bu_\lambda\varphi_\lambda: \bu_\Lambda\in\R^N \Bigr\}
\subset X,\qquad \bu_\Lambda :=(\bu_\lambda)_{\lambda\in\Lambda}.
\end{equation}
Since this requires choosing or {\em activating} $N$ among the infinitely many basis functions, such a process is called {\em nonlinear approximation}.
In practical realizations one typically generates an increasing sequence of activated index sets $\Lambda_n$, where the choice of $\Lambda_{n+1}$
exploits information gained from the preceding stage represented by $\Lambda_n$. This form of nonlinear approximation is referred to as {\em adaptive approximation}.  Its rigorous foundation very much relies on being able to quantify the accuracy obtained at a previous stage $n$. It is tantamount to asking
for {\em certified a posteriori error bounds}, which is a recurrent theme throughout this article. This issue is precisely what requires
intertwining the discrete and continuous setting and exploiting, in particular, intrinsic problem metrics.

These concepts have been quite successful and are by now fairly well understood for low spatial dimensions, typically $d\le 3$. In particular, the performance
of such schemes is essentially governed by {\em Besov regularity}, that is, smoothness in $L_p$ spaces where $p$ is allowed to be less than one,
see \cite{DeV:acta}.  In correspondence to the isotropic nature of such classical regularity notions, improved accuracy is achieved by
increasing {\em spatial localization}. Isotropic localization, however, is precisely what causes the curse of dimensionality.
  
In cases with moderately large $d$ (depending on the problem, for instance, the low two-digit regime), a successful remedy is based on employing 
{\em product-type} basis functions $\varphi_\lambda(x) = \varphi_{\lambda_1}^{(1)}(x_1)\cdots \varphi^{(d)}_{\lambda_d} (x_d)$, where
$x= (x_1,\ldots,x_d)$, each $x_i$ belongs to $\R^m$ for small $m$, and the $\varphi^{(i)}_j$ form a low-dimensional basis of the type 
discussed above. Associating with each index $\lambda_i$ a {\em scale} denoted by $|\lambda_i|$, typically indicating that 
$\operatorname{diam}(\operatorname{supp}\varphi^{(i)}_{\lambda_i})\sim 2^{-|\lambda_i|}$, one considers, for instance, expansions of the form
$$
\sum_{|\lambda_1|+\cdots+|\lambda_d|\le L} \uc_\lambda \varphi_\lambda(x).
$$
In other words, the a priori activated summands never involve simultaneously many fine scale basis functions, and the complexity of such expressions
scales like $2^L L^{d-1}$. This is the concept of {\em sparse grid} or {\em hyperbolic cross approximation}. One thus gives up
on isotropic localization. For such approximations to provide high accuracy one has to demand, however, correspondingly high
regularity, which in this context means controlled {\em mixed partial derivatives} of appropriate order. While in this form such schemes are still linear, they lend themselves to nonlinear versions in natural ways, see \cite{SSprod:08}. 

However, the principle of adaptively activating basis functions from any {\em preselected} basis or dictionary may no longer suffice for large $d$
to warrant affordable computational cost in an accuracy controlled approximation. Instead one has to resort to yet stronger notions of nonlinearity that may allow one to
better capture a hidden sparsity related to the problem at hand, inadvertedly asking for a deeper understanding of the underlying
continuous model. In other words, the actual  ``building blocks'' for representing the target function in the spirit of classical harmonic analysis, 
should {\em no longer be pre-determined} but rather depend on the problem at hand.

A particularly flexible parametrization format that has been lately attracting considerable attention are {\em deep neural networks} (DNN)
of the form 
\be
\label{NN}
\Phi_{\rm nn}\bigl( x ; \;(\bv_{\ell, i})  \bigr) := \sigma_1 \biggl( \sum_{i_1} \bv_{1,i_1} \sigma_{2, i_1} \Bigl(\cdots \sigma_{L, i_{L-1} }\Bigl(  \sum_{i_L} \bv_{L, i_L} x_{i_L}   \Bigr) \cdots\Bigr) \biggr).
\ee
That is, the mapping taking possibly high dimensional inputs $x$ into an approximation to $u(x)$ is a concatenation of affine maps followed by 
a componentwise nonlinear map, called ``activation function''. This approach is based on the presumption that in many application scenarios target objects are 
close to what can be covered by such parametrizations.
Although existence of efficient approximations of this type has been shown for various cases of interest \cite{Poggio:17,Yarotzki:17}, guaranteeing convergence and error control in the actual computation of such approximations currently remains a wide open problem. 

The type of approximation that we focus on here is somewhat ``less nonlinear'' than \eqref{NN} but of sufficiently strong nonlinearity to make it suitable for a range of problems with very large $d$. The principle is easiest to explain for $d=2$: assume that we have product basis functions $\varphi_{i,j}(x) = \varphi^{(1)}_{i}(x_1)\,\varphi^{(2)}_{j}(x_2)$, $i,j \in \N$. We then consider adaptive low-rank approximations of the form
\begin{equation}
\label{abstract lowrank d=2}
\begin{aligned}
&  \Phi_{\rm lr}\Bigl(x; r, N_1, N_2, \substack{ \uc^1_{1,1}, \ldots, \uc^1_{r,N_1},\\  \lambda_1,\ldots,\lambda_{N_1},} \;\substack{ \uc^2_{1,1}, \ldots, \uc^2_{r,N_2}, \\ \mu_1,\ldots,\mu_{N_2}}   \Bigr) 
   := \sum_{k=1}^r \sum_{i = 1}^{N_1} \sum_{j = 1}^{N_2} \uc^1_{k, i} \uc^2_{k, j} \varphi^{(1)}_{\lambda_{i}}(x_1) \varphi^{(2)}_{\mu_{j}}(x_2) \\
   &\qquad\qquad\qquad = \sum_{k=1}^r \biggl(\sum_{i= 1}^{N_1} \uc^1_{k, i} \varphi^{(1)}_{\lambda_{i}}(x_1)  \biggr) \biggl( \sum_{j = 1}^{N_2} 
  \uc^2_{k, j} \varphi^{(2)}_{\mu_{j}}(x_2) \biggr)
  =: \sum_{k=1}^r \uc^1_k(x_1)\, \uc^2_k(x_2).
  \end{aligned}
\end{equation}
with the additional \emph{rank parameter} $r$. 

To be specific, suppose we seek to approximate elements $u$ in a tensor product 
Hilbert space $\cH=\cH_1\otimes\cH_2$ where $\cH_i$ are separable Hilbert spaces spanned by the orthonormal bases
$\{\varphi^{(q)}_i\}_{i\in\N}$, $q=1,2$, respectively. Ideally, given a target tolerance $\e>0$, one would like to adapt
the rank $r$ as well as the spatial resolution of ``best-suited'' modes $u_k^1, u_k^2$ as functions of $x_1, x_2$, respectively,
which is obviously
a highly nonlinear problem. ``Best-suited'' means that a given target accuracy can be met with rank $r$ as small as possible.
  In the present two-dimensional case, this amounts to approximating the  {\em infinite} matrix 
  $\bU$ of basis coefficients with respect to the full tensor product basis $\{\varphi^{(1)}_i \otimes \varphi^{(2)}_j\}_{(i,j)\in \N^2}$
   with respect to the Frobenius  (or Hilbert-Schmidt) norm within the same
  target tolerance,
  by a finite {\em low-rank approximation}  $\bU^1 (\bU^2)^\top$, 
  where $\bU^1 = (u^1_{i,k})_{i,k=1}^{N_1,r}$, $\bU^2 = (u^2_{i,k})_{i,k=1}^{N_2,r}$.  

  Denoting by  $\bU_{N_1,N_2}$  the finite section of $\bU$ corresponding to the truncated basis 
 $\{\varphi^{(1)}_i \otimes \varphi^{(2)}_j\}_{i\le N_1,j\le N_2}$, 
  the error $\|\bU^1 (\bU^2)^\top -\bU\|_{\rm HS}$
  can be split into two portions $E_1:=\|\bU^1 (\bU^2)^\top -\bU_{N_1,N_2}\|_{\rm HS}$ and $E_2:=\| \bU_{N_1,N_2}-\bU  \|_{\rm HS}$.
  The vast majority of studies  focuses on
 controlling only  $E_1$, essentially ignoring how the overall error depends on the spatial discretizations represented by 
 $N_1, N_2$, determining $E_2$. In the current situation $d=2$, given a target accuracy $\e_1$ for $E_1$, the minimal rank $r$ can then be determined  by means of the   {\em Singular Value Decomposition} (SVD) of $\bU_{N_1,N_2}$.
  Observing that in many cases one can achieve $E_1\le \e_1$ with $r \ll N_1, N_2$ indeed  signals
  a substantial complexity reduction in the number of required coefficients $(N_1 + N_2) r$ compared to a full array representation involving $N_1 N_2$
  terms.  However, this by itself does not say much about the computational cost of approximately solving the original
  problem within some target tolerance $\e$, unless the error portions $E_1$ and $E_2$ are essentially {\em balanced} which
  actually depends  on the (full) {\em Hilbert-Schmidt decomposition} (HSD) of $\bU$. 
  Such an  assessment can, however, not be reached
  from a linear algebra perspective alone. The central objective of this article is to highlight concepts that allow one to certifiably control the total error $ \|\bU^1 (\bU^2)^\top -\bU\|_{\rm HS}$ for large $d$, which must involve the underlying continuous model.

 \subsection{Layout}
 Unfortunately, a straightforward extension of the format \eqref{abstract lowrank d=2} to $d>2$ lacks in general stability. Section \ref{sec:tensors}
 is therefore devoted to a brief discussion of alternate stable tensor formats and their main properties.
  In Section \ref{sec:solstrat} we outline a  solution strategy for a general class of high-dimensional {\em elliptic} operator equations covering the 
 two scenarios (I) and (II), addressed above, as special cases. 
 Section \ref{sec:approximability} addresses the 
 approximability of solutions to high-dimensional elliptic problems by tensor methods. 
 Again, the findings for the two   scenarios (I) and (II) turn out to be quite different.
 Finally, in Section \ref{sec:complexity} we present techniques for controlling the computational complexity of approximations as well as of numerical schemes based on the general strategy from Section \ref{sec:solstrat}, and discuss the conclusions for scenarios (I) and (II).

\section{Subspace-Based Tensor Formats and Nonlinear Approximation}\label{sec:tensors}
\subsection{Tensor Formats}

This
section offers a brief review of   approximating   elements $u$ in a tensor product 
\be
\label{cH}
\cH:=\cH_1\otimes\cdots\otimes\cH_d,
\ee
 of separable Hilbert spaces $\cH_i$, endowed with the unique {\em cross-norm} $\|\cdot\|_\cH$ 
associated with an inner product $\langle \cdot ,\cdot \rangle_\cH$ satisfying %
$\langle \bigotimes_{i=1}^d v_i,\bigotimes_{i=1}^d w_i \rangle_\cH = \prod_{i=1}^d\langle v_i,w_i \rangle_{\cH_i}$. 

 A natural extension of \eqref{abstract lowrank d=2} to spatial dimensions $d\ge 3$ would be  to seek approximations in the form
\be
\label{CP1}
u_r(x) = \sum_{k=1}^r u^1_{k}(x_1) \, u^2_{k}(x_2)\cdots u^d_{k}(x_d),
\ee
which is often referred to as the {\em canonical} or CP-{\em format}, \cite{Beylkin:02,Hackbusch:12}. Again, the issue is to find suitable modes $u_k^i$  that 
warrant high accuracy at the expense of a low rank $r$. However,  first, the class of rank-$r$ tensors is not closed, and second, best approximations from such classes do in general not exist \cite{Silva:08}. Therefore, representations in this format   need to be treated with care and
the observed  inherent instability   suggests looking for more stable tensor fomats \cite{Hackbusch:09-1,Hackbusch:12,Kolda:09,Lathauwer:00,Oseledets:09,Tucker:64}.

The canonical format is indeed only one special instance of possible tensor representation formats.
A particular type of format with certain restrictions on the components are {\em tensor networks};
for a comprehensive discussion we refer, for instance, to \cite{BSU:18}. In what follows we confine the discussion
to the important subclass of {\em tree networks} that heavily draws on   the favorable features of the case $d=2$.

\subsection{Tree-Based Hierarchical Formats}\label{ssec:tree}

To benefit from SVD concepts also when 
 $d>2$, the key idea is to view a (discrete) tensor $\bu$ of order $d$, say, as a {\em matrix} by viewing all multi-indices with respect to some subset
 $\alpha\subset \alpha^*:=\{1,\ldots,d\}$ as row-indices while the multi-indices with respect to the remaining variables in $\alpha^c := \alpha^*\setminus \alpha$ form the column indices. An SVD of this matrix gives rise to singular left and right vectors which are now tensors of 
 order $\#\alpha, \#\alpha^c$, respectively. By successively further decomposing the left singular vectors then leads to approximations
 in terms of tensors of lower and lower order. An underlying successive splitting of groups of variables into smaller ones can be 
 conveniently organized by a so called (binary) {\em dimension tree}
 $\mathbb{T}_d$ whose nodes $\alpha$ are subsets of the {\em root} $ \alpha^*=
 \{1,\ldots,d\}$. $\mathbb{T}_d$ always contains the root $\alpha^*$. Each node $\alpha$ of cardinality $\#\alpha > 1$ has  a unique pair of children $\alpha_1,\alpha_2\in \mathbb{T}_d$ such that $\alpha=\alpha_1\cup\alpha_2$, $\alpha_1\cap\alpha_2=\emptyset$, $\#\alpha_i <\#\alpha$.
 The set $L(\mathbb{T}_d)$ of leaves of $\mathbb{T}_d$ is comprised of the nodes  $\alpha$ of cardinality one.   
   The corresponding so-called {\em hierarchical tensor format} was developed by Hackbusch and K\"uhn \cite{Hackbusch:09-1}, see also 
 \cite{Oseledets:09}. 
 
 It will be important for what follows to formalize these concepts in the context of {\em infinite-dimensional} tensor product Hilbert spaces
 $\cH$ of the form \eqref{cH}.
  To that end, consider  again for any $\alpha\in \T_d$ 
the grouping $\cH = \cH_\alpha\otimes \cH_{\alpha^c}$. 
The continuous and linear extension of $M_\alpha(u_\alpha\otimes u_{\alpha^c})v := u_\alpha\langle u_{\alpha^c},v\rangle_{H_{\alpha^c}}$, $v\in H_{\alpha^c}$, to an operator $M_\alpha(u)$ for $u\in H$, 
\be
\label{HS}
  M_\alpha(u) : \cH_{\alpha^c}\to \cH_\alpha %
\eeqn
is a Hilbert-Schmidt operator from $\cH_{\alpha^c}$ to $\cH_\alpha$, i.e.,
$$
\|M_\alpha(u)\|_{\rm HS} = \|u\|_\cH,
$$
 and hence is compact. In a slight abuse of terminology we refer to $M_\alpha(u)$ as a {\em matricization} of $u$.
 By the spectral theorem for compact operators, there exist orthonormal systems $\{u^\alpha_k\}_{k\in \N}$, $\{u^{\alpha^c}_k\}_{k\in \N}$ (depending on $u$)
 of $\cH_\alpha,\cH_{\alpha^c}$, respectively, as well as a sequence of nonnegative numbers $\sigma^\alpha_k$ tending to zero such that
 \be
 \label{HSrep}
 u = \sum_{k=1}^\infty \sigma^\alpha_k u^\alpha_k\otimes u^{\alpha^c}_k.
 \ee
 The $\sigma^\alpha_k$ are the singular values associated with the matricization $M_\alpha(u)$.
 If $\sigma^\alpha_k=0$ for $k>r_\alpha$ we say that $u$ has $\alpha$-rank (at most) $r_\alpha=r_\alpha(u)\in \N_0\cup\{\infty\}$ which is the dimension of the range 
 \be
 \label{Ualpha}
 \cU_\alpha = \cU_\alpha(u) := \overline{{\rm range}\,(M_\alpha(u))}
 \ee
 of $M_\alpha(u)$ and hence equals the $\alpha^c$-rank of $u$. 
 In general,
  the subspaces $\cU_{\alpha,r} := {\rm span}\,\{u^\alpha_k: k=1,\ldots,r\}$ are {\em optimal} in the sense that
 \be
 \label{best}
\Big \|u- \sum_{k=1}^r \langle u,u^\alpha_k\otimes u^{\alpha^c}_k\rangle_\cH u^\alpha_k\otimes u^{\alpha^c}_k\Big\|_\cH =
 \min\,\big\{ \|u - w\|_\cH: w \,\text{ has $\alpha$-rank }\le r\} . %
 \ee
If one further decomposes $\alpha = \alpha_1\cup\alpha_2$ into its children $\alpha_1,\alpha_2$, according to $\T_d$, one has
 the {\em nestedness property}
 \be
  \label{nesting}
 \cU_\alpha \subseteq  \cU_{\alpha_1}\otimes \cU_{\alpha_2},
 \ee 
 see \cite{Hackbusch:12} (cf.~Corollary 6.18 and
Theorem 6.31 there).
  Whenever \eqref{nesting} holds one can recursively decompose $u\in \cU_\alpha$ as follows. For any orthonormal bases $\{u^\beta_k\}_{k=1}^{r_\beta}$ of $\cU_\beta$, $\beta\in \T_d$, one can write
   \be
\label{recursion}
u^\alpha_k = \sum_{k_1=1}^{r_{\alpha_1}}\sum_{k_2=1}^{r_{\alpha_2}}\bB^\alpha(k_1,k_2,k)u^{\alpha_1}_{k_1}\otimes u^{\alpha_2}_{k_2}.
\quad \alpha \in \mathbb{T}_d \setminus L(\mathbb{T}_d).
\ee
where $\bB^\alpha(k_1,k_2,k) = \big\langle u^\alpha_k, u^{\alpha_1}_{k_1}\otimes u^{\alpha_2}_{k_2}\big\rangle$.
The tensors $\bB^\alpha$ are referred to as {\em component}  or {\em transfer tensors}. Hence  any $u\in \cH$ can be recovered exactly in terms of its expansion tensor $u$ as
\be
\label{recoveru}
u =  \sum_{k_1=1}^{r_{\alpha^*_1}}\sum_{k_2=1}^{r_{\alpha^*_2}}\bB^{\alpha^*}(k_1,k_2)u^{\alpha^*_1}_{k_1}\otimes u^{\alpha^*_2}_{k_2},
\ee
with $r_{\alpha^*_1}=r_{\alpha^*_2}\in \N\cup\{\infty\}$.
A recursive substitution of \eqref{recursion} therefore parametrizes a $u\in \cU_{\alpha^*_1}\otimes
\cU_{\alpha^*_2}$ in terms of the transfer tensors $\bB^\alpha$, $\alpha\in \T\setminus L(\T)$ and the {\em  mode frames} 
$u^\mu\in \cH_\mu$, $\mu\in \{1,\ldots,d\}$. Using the SVD to successively generate the orthonormal systems $(u^\alpha_k)_{k=1}^{r_\alpha}$,
is referred to as {\em hierarchical singular value decomposition} $\cH$SVD.

 Before proceeding let us emphasize that such hierarchical decompositions are actually identified by the set of {\em pairs} 
 \be
 \label{E}
 \mathbb{E} := \big\{e=\{\alpha,\alpha^c\}:  \alpha
 \in \T_d\setminus \{\alpha^*\}\big\},
 \ee
 called the set of {\em effective edges}, see \cite{BSU:18}. Different dimension trees can give rise to the same $\mathbb{E}$ and hence to the same matricizations.
 Such trees are in that sense equivalent. 

 In other words, for $\{\alpha,\alpha^c\}\in \mathbb{E}$ and any $\T_d$ in the equivalence class determined by
 $\mathbb{E}$, either $\alpha$ or $\alpha^c$  belongs to $\T_d$, and this element is the representer of $e=\{\alpha,\alpha^c\}$
 denoted by $[e]\in \T_d$. 
 It is easy to see that
  \be
 \label{card}
  \#\mathbb{E} = 2d-3 =: E.
 \ee
We fix in what follows an enumeration $\{e_i\}_{i=1}^E$ of the effective edges in $\mathbb{E}$.
 
 As a consequence, for $u\in \cH$ and a given $\mathbb{E}$ (and hence all dimension trees in the corresponding equivalence class) we can write for the  corresponding matricizations and  subspaces $M_i(u):= M_{[e_i]}(u)$, $\cU_i=\cU_i(u)=\cU_{[e_i]}(u)$, respectively.
 One can then associate with $u$ its $\mathbb{E}$-{\em rank}
 \be
 \label{Erank}
 \rr{r}_{\mathbb{E}}(u) \in (\N_0\cup \{\infty\})^E := \big( r_i\big)_{i=1}^E,
 \ee
where $r_i = r_i(u) := {\rm dim}\,\cU_i(u)= \rank(M_i(u))$,  $ i=1,\ldots, E$.
 
In summary, any $u\in \cH$ can be parametrized by 
\be
\label{param}
\mathfrak{p}(u):= \big(\rr{r}_{\mathbb{E}}(u), (\bB^{\alpha})_{\alpha \in \mathbb{T}_d\setminus L(\mathbb{T}_d)}, (u^\mu)_{\mu=1}^d\big).
\ee
Such a 
parametrization is obviously {\em finite} if all transfer tensors have finite ranks, in particular, when all entries in $\rr{r}_{\mathbb{E}}$
are finite. Moreover, the leaf elements $u^\mu$ must admit a finite parametrization, e.g. in terms of a truncated orthonormal basis 
for $\cH_\mu$. Thus $\#\mathfrak{p}(u)\in \N\cup\{\infty\}$ is the total number of parameters needed to represent $u$ in the above 
hierarchical format. 

The appeal of such hierarchical tensor decompositions for high-dimensional approximation  lies in the following facts, see also 
\cite{Grasedyck:10,Grasedyck:13}.
\begin{remark}
\label{rem:compl}
Suppose for a moment that the maximal number of parameters to determine each leaf modes $u^\mu$ is $n$
and that all entries of $\rr{r}_{\mathbb{E}}(u)$ are bounded by a fixed $r\in \N$. In view of \eqref{card}, 
we see that the number of parameters to be stored is of the order
$O(dr^3 +ndr)$. The total   numerical complexity of computing a $\Hcal$SVD under these premises is of the
order of $O(dr^4 +dr^2n)$. In such a situation the {\em representation and computational complexities} of $u$ depend only {\em linearly}
 on the spatial dimension $d$.
\end{remark}
 Thus, approximating a given $u\in \cH$ by properly truncated versions $u_\e$ for which 
$\#\mathfrak{p}(u_\e)$ is of moderate size may open, at least for certain classes of target functions $u\in \cH$, a promising avenue to
mitigate the curse of dimensionality. 

To make this somewhat more precise, note first that the entries of the rank vectors $\rr{r}_{\mathbb{E}}(u)$ must satisfy 
in the finite case some
{\em compatibility conditions} in order to comply with \eqref{nesting}. 
 In fact, it   follows from the results just stated
that for $i=1,\ldots,E$   one has $r_i(u)\leq r_{[e_i]_1}(u)  r_{[e_i]_2}(u)$.
  For necessary and sufficient conditions on a rank vector $\rr{r}_{\mathbb{E}}$
 we refer to \cite[Section 11.2.3]{Hackbusch:12}. In what follows we denote by 
\begin{equation}
\label{def:R-H}
\mfR=\mfR_{\mathbb{E}}\subset (\N_0 \cup \{\infty\})^{ \mathbb{E}}
\end{equation}
 the set of all hierarchical rank vectors satisfying the compatibility conditions for nestedness \eqref{nesting}.
 For any $\rr{r}\in \mfR_{\mathbb{E}}$ we define then
\begin{equation}
\label{eq:hier_tensorset}
  \Hcal(\rr{r}) := \bigl\{ {u} \in
 \cH \colon  r_i(u) \leq {r}_i \text{
  for all $i=1,\ldots,E$} \bigr\} \,.
\end{equation}
In what follows we will be concerned with employing elements from such hierarchical tensor classes to approximate solutions
to problems of type (I) and (II).

In the previous discussion the dimension tree $\T_d$, and hence $\mathbb{E}$, were kept fixed.
It is certainly an interesting question of how to adapt 
$\T_d$ to a given approximand $u\in \cH$, so as to warrant good approximations at the expense of possibly small ranks $\rr{r}\in \mfR_{\mathbb{E}}$.  We will not address this issue in any depth but pause to
 briefly mention
the following cases of interest. Balanced trees arise when successively splitting the nodes $\alpha$ into two children
of roughly the same cardinality.   The opposite case of a linear tree amounts to always choosing the left child $\alpha_1$ have cardinality one (i.e., as a leaf node), while collecting the other entries of $\alpha$ in the right child $\alpha_2$.
The resulting format is termed {\em Tensor Train} (TT) format \cite{Oseledets:09}. 
Combining pairs of mode frames and transfer tensors, this gives rise to an explicit  (entry-wise) multilinear representation
\be
\label{TT}
u(x_1,\ldots,x_d) = \sum_{k_1=1}^{r_1}\cdots\sum_{k_{d-1}=1}^{r_{d-1}}u^1(x_1,k_1)\,u^2(k_1,x_2,k_2)\,u^3(k_2,x_3,k_3)\cdots u^d_{k_{d-1}}(x_d)
\ee
with remaining rank parameters $r_1,r_2,\ldots,r_{d-1}$.
This format is also known as {\em matrix product states} in physics. The representation complexity is
now easily seen to be $O(dnr^2)$ which again nourishes hope to defeat the curse of dimensionality for suitably nearly-sparse $u\in \cH$,
see e.g. \cite{Oseledets:09,BSU:18} for more details.

\subsection{Complexity Reduction}\label{ssec:properties}
In contrast to the canonical format the above subspace based tensor format warrants now the {\em existence} of best approximations in $\cH(\rr{r})$.
Following \cite{Falco:10,Hackbusch:12}, we can  can state this as follows.
\begin{theorem}
\label{thm:htucker_bestapprox}
For $u\in \cH$ and $\T_d$   a dimension tree from the equivalence class of $\mathbb{E}$,
 let $\rr{r} \in \mfR_{\mathbb{E}}$ with $0 \leq r_i \leq
r_i( {u})$ for $i=1,\ldots,E$. Then there exists $ {v}
\in \Hcal(\rr{r})$ such that
\begin{equation*}
  \norm{ {u} -  {v}}_{\cH} =
  \min
  \bigl\{ \norm{ {u} -  {w}}_{\cH} \colon
   r_i( {w})\leq r_i, \, i=1,\ldots,E \bigr\} \,.
\end{equation*}
\end{theorem}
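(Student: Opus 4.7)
The plan is to use a standard weak compactness argument, with the crucial ingredient being that each hierarchical rank constraint is preserved under weak limits in $\cH$.

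First, I would choose a minimizing sequence $\{v^{(n)}\}_{n\in\N}\subset \Hcal(\rr{r})$ such that $\|u-v^{(n)}\|_\cH \to \inf\{\|u-w\|_\cH : w\in \Hcal(\rr{r})\}$. Since $0 \in \Hcal(\rr{r})$ the infimum is finite and bounded by $\|u\|_\cH$, so the triangle inequality yields a uniform bound $\|v^{(n)}\|_\cH \leq 2\|u\|_\cH$. By the Banach--Alaoglu theorem applied in the separable Hilbert space $\cH$, I may pass to a subsequence (not relabeled) with $v^{(n)}\rightharpoonup v^*$ weakly in $\cH$. Weak lower semicontinuity of the norm gives $\|u-v^*\|_\cH \leq \liminf_{n\to\infty}\|u-v^{(n)}\|_\cH$, so if I can establish $v^*\in \Hcal(\rr{r})$, then $v^*$ is the desired minimizer.

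The core of the argument is therefore to show that each rank constraint $r_i(v^*)\leq r_i$ is inherited from the sequence. Fix an effective edge $e_i$ with representer $[e_i]\in\T_d$, and consider the matricization $M_i\colon \cH \to \mathrm{HS}(\cH_{[e_i]^c},\cH_{[e_i]})$ from \eqref{HS}. Since $\|M_i(w)\|_{\rm HS} = \|w\|_\cH$, the map $M_i$ is a linear isometry into the Hilbert space of Hilbert--Schmidt operators, hence weakly continuous: $M_i(v^{(n)}) \rightharpoonup M_i(v^*)$ in the HS inner product. The key observation is that the set $\{T\in \mathrm{HS} : \rank(T)\leq r_i\}$ is weakly closed. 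Indeed, for any choice of test vectors $\phi_1,\ldots,\phi_{r_i+1}\in \cH_{[e_i]^c}$ and $\psi_1,\ldots,\psi_{r_i+1}\in \cH_{[e_i]}$, one has $\langle M_i(w)\phi_k,\psi_\ell\rangle = \langle M_i(w), \psi_\ell\otimes\phi_k\rangle_{\rm HS}$, and the $(r_i+1)\times(r_i+1)$ determinant $\det\bigl(\langle M_i(v^{(n)})\phi_k,\psi_\ell\rangle\bigr)_{k,\ell}$ vanishes for every $n$ because $\rank(M_i(v^{(n)}))\leq r_i$. Passing to the weak limit in each entry (and using continuity of the determinant as a polynomial in its entries), the determinant vanishes for $M_i(v^*)$ as well, and since the $\phi_k,\psi_\ell$ were arbitrary, it follows that $\rank(M_i(v^*))\leq r_i$. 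Doing this for each $i\in\{1,\ldots,E\}$ places $v^*$ in $\Hcal(\rr{r})$; the nestedness compatibility \eqref{nesting} is automatic once every individual matricization rank is bounded accordingly.

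The main obstacle is precisely this weak closedness of the rank-$r$ stratum under matricization. It fails for the canonical format (where the relevant set is not of this form and lacks closedness, which is exactly the instability alluded to after \eqref{CP1}), so it is essential to work with the subspace-based matricization ranks $r_i(\cdot)$, for which the determinantal characterization above makes weak closedness routine. A minor technical point to dispatch is the separability needed for sequential weak compactness; this is guaranteed since each $\cH_i$ is separable and $\cH$ is their tensor product. Everything else, including the passage to the infimum and the identification of $v^*$ as a minimizer, then follows by the standard direct-method template.
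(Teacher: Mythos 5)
Your proof is correct, and it follows essentially the same route as the argument behind the paper's statement: the paper gives no proof of its own but cites \cite{Falco:10,Hackbusch:12}, where existence is likewise obtained by the direct method, using weak compactness of bounded sets in $\cH$ together with weak (sequential) closedness of the sets $\{w : r_i(w)\le r_i\}$, i.e.\ lower semicontinuity of the matricization ranks under weak convergence — which is exactly what your determinant criterion establishes. The only cosmetic points are that the bound $\|v^{(n)}\|_\cH\le 2\|u\|_\cH$ should be ``for $n$ large, up to an $\varepsilon$'' and that membership in $\Hcal(\rr{r})$ already follows from the individual rank bounds by definition \eqref{eq:hier_tensorset}, so no appeal to \eqref{nesting} is needed.
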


Note that as a consequence of \eqref{nesting}, an element providing a best approximation to $u$ as in Theorem \ref{thm:htucker_bestapprox} can be written as the result of a projection applied to $u$; we make use of this fact in Section \ref{sec:complexity}.
While the computation of best approximations is usually
not computationally feasible, {\em near-best approximations} can be obtained at affordable cost: The $\cH$SVD produces orthonormal bases $(u^i_k)_{k=1}^{r_i(u)}$ for each $\cU_i$, associated to the decreasing sequences $(\sigma_k^{(i)})_{k=1}^{r_i}$ of singular values from \eqref{HSrep}.  
Truncation to lower ranks $\tilde r_i \leq r_i(u)$ for each $i$ gives rise to a projection $\mfH_{\tilde{\rr{r}}}$ into $\cH(\tilde{\rr{r}})$. It amounts to truncating the corresponding transfer tensors and mode frames in \eqref{param}, leading to approximations with errors bounded in terms of the quantities
  \begin{equation}
\label{eq:htucker_err}
 \tau_{i,r}(u):= \Big(\sum_{k>r}|\sigma^{(i)}_k(u)|^2\Big)^{1/2},\qquad
t_\rr{\tilde r}(u) %
   := \Bigl( \sum_{i=1}^E  \tau_{i, \tilde r_i}(u)^2
  \Bigr)^{\frac12} \,,
\end{equation}
with the following quasi-optimality property shown in \cite{Grasedyck:10}.

\begin{theorem}
\label{thm:hier_tensor_recompression_est}
Let $ {u} \in \cH$.
{Then for hierarchical ranks $\rr{\tilde r} = (\tilde
r_i)_{i=1}^E\in \mfR_\Hcal$,} we have{
\begin{equation*}
 \norm{ {u} - \mfH_{\tilde{\rr{r}}}(u)}_{\cH} \leq t_\rr{\tilde r}(u)
  \leq \sqrt{2 d - 3} \, \inf
 \bigl\{\norm{ {u} -  {v}}_{\cH} \colon  {v} \in
 \Hcal(\rr{\tilde r}) \bigr\} \,.
\end{equation*}}
\end{theorem}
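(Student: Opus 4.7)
The statement splits naturally into two inequalities, and I would treat them separately. The left inequality describes how close the $\cH$SVD truncation $\mfH_{\tilde{\rr{r}}}(u)$ actually is to $u$, and the right inequality compares the computable quantity $t_{\rr{\tilde r}}(u)$ to the unknown best-approximation error in $\Hcal(\rr{\tilde r})$. For the first part the strategy is a telescoping-plus-Pythagoras estimate over the $E=2d-3$ effective edges; for the second part the workhorse is the classical Eckart--Young property of the SVD at each individual edge combined with the fact that matricization is an isometry $\cH\to\mathrm{HS}(\cH_{[e_i]^c},\cH_{[e_i]})$.

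For the first inequality, I would write $\mfH_{\tilde{\rr{r}}}=P_E\cdots P_1$, where $P_i$ is the orthogonal projection (on $\cH$ realised through $M_i$) onto the top-$\tilde r_i$ left singular subspace of the current matricization at edge $e_i$. If one processes the edges in an order compatible with $\T_d$, then nestedness \eqref{nesting} guarantees that the composed operator really maps into $\Hcal(\rr{\tilde r})$, and a key step is to verify that the singular values $\sigma_k^{(j)}$ at edges $j$ not yet truncated are not increased by the previously applied $P_i$'s, so that at step $j$ the truncation error is bounded by $\tau_{j,\tilde r_j}(u)$ rather than by the (a priori larger) tail of the truncated tensor. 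Writing the error telescopically, $u-\mfH_{\tilde{\rr{r}}}(u)=\sum_{j=1}^E(I-P_j)P_{j-1}\cdots P_1 u$, and arguing that the successive summands are pairwise orthogonal (because each $(I-P_j)P_{j-1}\cdots P_1 u$ lies in the kernel of later projections, by the hierarchical compatibility of the subspaces), Pythagoras yields
\begin{equation*}
\|u-\mfH_{\tilde{\rr{r}}}(u)\|_\cH^2=\sum_{j=1}^E\|(I-P_j)P_{j-1}\cdots P_1 u\|_\cH^2\le\sum_{j=1}^E\tau_{j,\tilde r_j}(u)^2=t_{\rr{\tilde r}}(u)^2.
\end{equation*}

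For the right-hand inequality, let $v\in\Hcal(\rr{\tilde r})$ be arbitrary. By definition $r_i(v)\le\tilde r_i$, so $M_i(v)$ has rank at most $\tilde r_i$. The Eckart--Young theorem applied to the matricization at edge $e_i$ then gives
\begin{equation*}
\tau_{i,\tilde r_i}(u)^2=\sum_{k>\tilde r_i}\bigl|\sigma_k^{(i)}(u)\bigr|^2\le\|M_i(u)-M_i(v)\|_{\mathrm{HS}}^2=\|u-v\|_\cH^2,
\end{equation*}
the last identity being the isometry of $M_i$ between the cross-norm on $\cH$ and the Hilbert--Schmidt norm. Summing over the $E=2d-3$ effective edges and taking the square root,
\begin{equation*}
t_{\rr{\tilde r}}(u)\le\sqrt{2d-3}\,\|u-v\|_\cH,
\end{equation*}
and passing to the infimum over $v\in\Hcal(\rr{\tilde r})$ gives the claimed bound.

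The main technical obstacle is the orthogonality of the error pieces used in the first inequality: without it one only obtains the weaker bound $\|u-\mfH_{\tilde{\rr{r}}}(u)\|_\cH\le\sum_j\tau_{j,\tilde r_j}(u)$, which via Cauchy--Schwarz loses an extra $\sqrt{2d-3}$ and yields the wrong constant. Establishing orthogonality requires a careful choice of the processing order together with the observation that, thanks to \eqref{nesting}, the range of each $P_j$ is preserved by the later projections, so the kernel pieces $(I-P_j)P_{j-1}\cdots P_1u$ sit in mutually orthogonal subspaces. Once this structural fact is in place, both inequalities follow from standard SVD arguments.
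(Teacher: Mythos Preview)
Your argument for the right-hand inequality is exactly the standard one and is correct: Eckart--Young at each effective edge, the isometry $\|M_i(\cdot)\|_{\mathrm{HS}}=\|\cdot\|_\cH$, and summation over the $E=2d-3$ edges.

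The left-hand inequality, however, has a genuine gap. With $P_j$ the orthogonal projection onto $U_{[e_j],\tilde r_j}\otimes\cH_{[e_j]^c}$ determined by the singular subspace of the \emph{original} $u$ (which is how $\mfH_{\tilde{\rr r}}$ is defined here, following \cite{Grasedyck:10}), the pieces $(I-P_j)P_{j-1}\cdots P_1u$ of your forward telescoping are \emph{not} pairwise orthogonal in the hierarchical case. Orthogonality would require that $\operatorname{range}(P_j)$ be invariant under every later $P_k$; when $[e_j]\subset[e_k]$ this amounts to the truncated subspace $U_{[e_k],\tilde r_k}$ being contained in $U_{[e_j],\tilde r_j}\otimes\cH_{[e_k]\setminus[e_j]}$, which nestedness \eqref{nesting} guarantees only for the \emph{full} spaces $\cU_\alpha(u)$, not for their truncations. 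So the Pythagorean identity you write down fails in general (it does hold in the Tucker case, where all $P_j$ act on disjoint mode groups and commute, which may be what misled you). The ``singular values do not increase'' observation does not rescue the bound either, because $P_j$ is not the best rank-$\tilde r_j$ projection of the partially truncated tensor.

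The fix, used in \cite{Grasedyck:10}, is to telescope in the opposite direction,
\[
u-P_E\cdots P_1u=(I-P_E)u+P_E\bigl(u-P_{E-1}\cdots P_1u\bigr),
\]
observe that the two summands are orthogonal (one lies in $\ker P_E$, the other in $\operatorname{range}P_E$), and use $\|P_E\|\le1$ to obtain
\[
\|u-P_E\cdots P_1u\|_\cH^2\le\|(I-P_E)u\|_\cH^2+\|u-P_{E-1}\cdots P_1u\|_\cH^2.
\]
Induction then gives $\|u-\mfH_{\tilde{\rr r}}(u)\|_\cH^2\le\sum_{j=1}^E\|(I-P_j)u\|_\cH^2=\sum_{j=1}^E\tau_{j,\tilde r_j}(u)^2=t_{\tilde{\rr r}}(u)^2$, with the last equality immediate since each $P_j$ is the best rank-$\tilde r_j$ projection of $u$ itself. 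Note that this argument requires \emph{no} structural relation among the $P_j$ whatsoever; it holds for any finite family of orthogonal projections.
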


The operator $\mfH_{\tilde{\rr{r}}}$ in essence  truncates the $i$-ranks or equivalently sets corresponding $i$-singular values to zero
and is therefore sometimes  referred to
as {\em hard thresholding}.
While $\mfH_{\rr{r}}$  does not provide truly best rank $\rr{r}$-approximations, the inflating constant depends only
mildly on the dimension $d$ and it is computationally feasible at affordable cost. For detailed discussions of how to realize $\mfH_{\tilde{\rr{r}}}$ efficiently we refer to \cite{Grasedyck:10}.

 Since $\Hcal(\rr{r})$ is %
 not a linear space, arithmetic calculations with hierarchical tensors will inevitably increase ranks. To control complexity it is important to approximate
 an element in $\cH(\rr{r})$ by one of smaller ranks  in $\cH(\tilde{\rr{r}})$, $\tilde{\rr{r}}\le \rr{r}$, as well as possible.
 This is often referred to as {\em recompression}. We defer the discussion of principles of how to properly balance
 a rank reduction by recompression with the entailed loss of accuracy to a later section. 

An alternative type of rank reduction is based on the concept of {\em soft thresholding} of singular values. It has the advantage of preserving the contraction properties of iterative schemes, as considered in more detail in Section \ref{sssec:st}. Soft thresholding as a scalar operation is defined for a given thresholding parameter $\eta>0$ as
\be
\label{softthres}
s_\eta(x) := {\rm sgn}(x)\max\{|x|-\eta,0\},\quad x\in \R.
\ee
The crucial property of this operation is its {\em non-expansiveness}, that is,
\be
\label{nonexp}
|s_\eta(x)- s_\eta(y)|\le |x-y|,\quad x,y\in \R.
\ee
Note that $s_\eta$ can be characterized variationally by
\be
\label{var}
s_\eta(x) := \argmin_{y\in\R}\Big\{\frac 12 |x-y|^2 +\eta|y|\Big\},
\ee
which can be used to extend this notion to Hilbert-Schmidt operators.
To that end, we make use of the well-known fact that for any two Hilbert spaces $\cH,\tilde\cH$,
and $v,w\in \cH\otimes \tilde\cH$ with sequences $\sigma^\cH(v),  \sigma^{\cH}(w)$ of singular values
associated with the matricizations $M_\cH(v), M_{\cH}(w)$, one has
\be
\label{singdiff}
\|\sigma^\cH(v) -\sigma^\cH(w)\|_{\ell_2}\le \|v-w\|_{\cH\otimes \tilde\cH} = \|M_\cH(v)-M_\cH(w)\|_{\rm HS}.
\ee
We can then define the {\em nuclear norm} of the matricization $M_\cH(v)$ of $v$ by

\be
\label{nuclear}
\|M_\cH(v)\|_* := \|\sigma^\cH(v)\|_{\ell_1},
\ee
and let 
\be
\label{stH}
S_\eta\big(M_\cH(v)\big) := \argmin_{w\in \cH\otimes\tilde\cH} \Big\{\eta\|M_\cH(w)\|_* + \frac 12\|v-w\|_{\cH\otimes\tilde\cH}^2\Big\}.
\ee

Returning now to the setting \eqref{cH}, with matricizations $M_i(u) = M_{\cH_{[e_i]}}(u)$, $i=1,\ldots,E$,
we denote by $M_i$ and $M_i^{-1}$ the mappings taking a $u\in \cH$ into its matricization $M_i(u)$ 
relative to $[e_i]\in\T_d$ and its inverse, respectively, to set
\be
\label{ith}
\mfS_{i,\eta}(u):= \big(M_i^{-1}\circ S_\eta \circ M_i\big)(u),\quad i=1,\ldots,E,
\ee
and finally
\be
\label{Seta}
\mfS_\eta(u) := \big(\mfS_{E,\eta}\circ\cdots\circ \mfS_{1,\eta}\big)(u).
\ee
It has been shown in \cite{BS:17} that this
inherits the non-expansiveness of the scalar thresholding operator \eqref{nonexp},
\be
\label{Lip1}
\|\mfS_\eta(u)- \mfS_\eta(v)\|_\cH \le \|u-v\|_\cH,
\ee
see \cite[Prop. 3.2]{BS:17}.

\section{Solution Strategies for Operator Equations}\label{sec:solstrat}

In this section, we introduce the precise formulations of our main model problems and their sequence space representations that render them amenable to low-rank tensor approximations. We then describe a common basic construction principle of numerical solvers 
and present the basic principles of deriving   rigorous complexity bounds.

\subsection{Problem Classes and Representative Model Scenarios}\label{sec:probclass}

Let us now consider in more detail the two concrete model problems mentioned in the introduction. \medskip

\noindent\textbf{Diffusion problems (I):} The first concerns diffusion problems in weak formulations on $V:=H^1_0(\Omega)$ where $\Omega = \Omega_1\times \cdots\times \Omega_d$ is a product domain. We wish to find $u\in V$ for given $f \in V'$ such that
\begin{equation}\label{diffusion}
	a(u,v) := \int_\Omega M \nabla u \cdot \nabla v \,dx = f(v), \quad v \in V.
\end{equation}
Here we assume for simplicity that $\Omega_i = [0,1]$ and that $M \in \R^{d\times d}$ is constant and symmetric (with the Poisson problem $M= I$ as a special case). This stationary boundary value problem also constitutes a first step in treating more general evolution or eigenvalue problems on high-dimensional domains.
The bilinear form $a(\cdot,\cdot)$ is ensured to be $V$-elliptic by the assumption that there exist $\gamma, \Gamma>0$ such that $\gamma \leq \langle M \xi, \xi\rangle \leq \Gamma$ for all $\xi \in \R^d$, $\abs{\xi} = 1$. Defining the operator $A$ by $\langle A u, v\rangle = a(u,v)$, $u,v\in V$,
Lax-Milgram's Theorem then says that $A:V\to V'$ is an {\em isomorphism}. 
 
 It will later be important to note that the energy space $V$ is in this case
the {\em intersection} of tensor products of Hilbert spaces
\be
\label{intersect}
V= \bigcap_{j=1}^d L_2(\Omega_1)\otimes\cdots\otimes L_2(\Omega_{j-1}) \otimes H^1_0(\Omega_j)\otimes L_2(\Omega_{j+1})
\otimes\cdots\otimes L_2(\Omega_{d}).
\ee

\noindent\textbf{Parametric Problems (II):} Our second model problem concerns second-order elliptic PDEs with diffusion coefficients depending in an affine manner on scalar parameters $y \in Y := (-1,1)^d$.  Now let $V:= H^1_0(\Omega) \otimes L_2(Y,\mu)$, with $\Omega \subset \R^m$ any domain with, e.g., $m \in \{1,2,3\}$, and let $\mu$ denote  the uniform measure on $Y$. Assuming for simplicity right hand sides $f\in H^{-1}(\Omega)$ independent of $y\in Y$, 
 we consider problems of the form: find $u \in V$
 such that for all $v\in V$
\begin{equation}\label{parametric}
 a(u,v) := \int_{Y}\int_{\Omega}  a(y)\, \nabla u(y) \cdot \nabla v(y)\,dx\,d\mu(y)  = \int_Y \int_\Omega f(v(y)) \,dx\,d\mu(y),
\end{equation}
where $a(y) = \bar a + \sum_{i=1}^d y_i \psi_i$. We assume that $\bar a, \psi_i \in L_\infty(\Omega)$, for $i=1,\ldots,d$, satisfy
$\sum_{i=1}^d \abs{\psi_i} \leq \theta \bar a$ for some $\theta < 1$, which ensures that the bilinear form $a(\cdot,\cdot)$ is  $V$-elliptic and 
the induced operator  $A:V\to V'$ is again an isomorphism. 
In this case the energy space $V = H^1_0(\Omega) \otimes L_2(Y,\mu)$ is a tensor product space endowed with a cross-norm, since
 \[
   L_2(Y,\mu) =   \bigotimes_{i=1}^d L_2\big((-1,1),\textstyle\frac {d y_i}2\big).
 \]

 Here the case $d=\infty$ of countably many parameters, which arises, e.g., in the Karhunen-Lo\`eve expansion of random fields, is 
 explicitly permitted and the fact that target functions may depend on infinitely many variables is a particular challenge in this scenario.

\subsection{Sequence Space Formulation}\label{ssec:seqspace}
The crucial role of rigorous a posteriori error bounds has been emphasized before. Such bounds can be based on the 
{\em error-residual relation} \eqref{res} valid for elliptic problems. Rather than employing duality arguments relying on spatial localization
as for problems in low spatial dimensions, which here are infeasible, we will exploit these relations through first transforming the continuous problem into an equivalent one where domain and range of the transformed operator are the same. More precisely,
as a first step,  to make the considered variational formulations amenable to the concepts discussed in Section \ref{sec:tensors}, we  choose coordinates on the underlying Hilbert spaces via suitable isomorphisms to $\ell_2$ sequence spaces, endowed with the 
norm $\|\bv\|_{\ell_2(\cI)}^2:= \sum_{i\in\cI}|\bv_i|^2$.  Linear mappings of this kind are given by \emph{Riesz bases} of these spaces: a family $\Phi := \{  \varphi_i \}_{i \in \Ical}$ in a Hilbert space $\Hcal$ is Riesz basis if there exist $c_\Phi,C_\Phi >0$ such that 
\be
\label{Riesz}
 c_\Phi \norm{\bv}_{\ell_2(\cI)} \leq \Bignorm{\sum_{i \in \cI} \bv_i \varphi_i }_\Hcal \leq C_\Phi \norm{\bv}_{\ell_2(\cI)},\quad \bv \in \ell_2(\cI), 
\ee
which implies that the corresponding linear mapping $S_\Phi \colon \ell_2(\Ical)\to \Hcal, \bv \mapsto \sum_{i\in\Ical} \bv_i \varphi_i$, is bounded and continuously invertible. Its adjoint is given by $S'_\Phi \colon \Hcal' \to \ell_2(\Ical), \psi \mapsto ( \psi(\varphi_i) )_{i \in \Ical}$. For a detailed description of such Riesz-bases in both scenarios (I) and (II) the reader is referred to \cite{BD2,BD3,BCD15}.

Now suppose one has a Riesz basis $\Phi$ for the energy space $V$ on which the   variational problem:  find $u\in V$, such that
\be
\label{varprob} 
a(u,v)= f(v), \quad v\in V,
\eeqn 
is posed.
Then, for $A:V\to V'$ induced by \eqref{varprob}, defining $\bA = S'_\Phi A S_\Phi$, $\bu = S^{-1}_\Phi u$ and $\bbf = S'_\Phi f$,
\be
\label{l2}
\bA \bu = \bbf,
\ee
is an equivalent reformulation  of \eqref{varprob}  on $\ell_2(\Ical)$. Since $A$ and $S_\Phi$ are isomorphisms,   $\bA: \ell_2(\Ical)\to \ell_2(\Ical)$ is  a boundedly invertible mapping  with the explicit representation $\bA= \big(a(\varphi_\lambda,\varphi_\nu)\big)_{\lambda,\nu\in \cI}$  (see e.g. \cite{actanum}).   
Moreover, it is not hard to verify that 
\be
\label{constants}
 {c_a c^2_\Phi}  \|\bv\|_{\ell_2(\cI)}\le \|\bA\bv\|_{\ell_2(\cI)} \le {C_a C^2_\Phi}  \|\bv\|_{\ell_2(\cI)},\quad \bv \in  \ell_2(\cI).
\ee
In what follows we often abbreviate $\|\cdot\|:=\|\cdot\|_{\ell_2(\cI)}$ if the index domain $\cI$ is clear from the context.

\subsection{Iterative Solvers with Recompression}\label{sec:iter}

Assuming that we have a well-conditioned representation $\bA \bu = \bbf$ on a product sequence space $\ell_2(\Ical)$ with $\Ical = \Ical_1 \times\cdots \times\Ical_d$, we now aim to iteratively construct an approximation of the solution coefficient sequence $\bu$ in hierarchical low-rank format. Due to the high dimensionality of the index set $\Ical$, this is only feasible if all steps in such an iteration are performed entirely on low-rank representations, which requires corresponding low-rank representations (or approximations) of $\bA$ and $\bbf$.

In our present context of elliptic problems, the most straightforward way of obtaining a sequence of approximations converging to $\bu$ is a Richardson iteration: As a consequence of \eqref{constants}, one can find $\omega>0$ such that the iterative scheme
\be
\label{Richardson}
\bu^{n+1}=\bu^n +\omega (\bbf - \bA\bu^n),\quad n=0,1,2,\ldots,
\ee
converges to $\bu$ for any $\bu^0$.
Of course, the iteration \eqref{Richardson} on an infinite-dimensional sequence space cannot be directly realized numerically. 
\smallskip

\noindent\emph{Finite supports:} First, one needs to ensure that all $\bu^n$ have finitely many nonzero entries. Since $\bbf$ and each column of $\bA$ are generally infinitely supported, this amounts to an appropriate truncation of $\bA\bu^n-\bbf$, to arrive at a computable perturbed version
\begin{equation}\label{richardson}
      \bu^{n+1} = \bu^n - \omega_n \br^n, \quad \bu^0 = 0,
\end{equation}
of \eqref{Richardson}.
 The most common strategy is to make an educated guess of a fixed $\Lambda \subset \Ical$ and always use $\br^n = (\bA \bu^n - \bbf)|_\Lambda$, which enforces $\supp \bu^n \subseteq \Lambda$ for all $n$. However, due to the limited accuracy in the residual approximation, the iteration then only converges to the Galerkin approximation $\bu_\Lambda$ given by $(\bA|_{\Lambda\times\Lambda})\bu_\Lambda = \bbf|_{\Lambda}$. Especially in high-dimensional problems, the appropriate choice (or refinement) of such $\Lambda$ to achieve a certain target error is typically not  obvious. Therefore, discretizations will never be fixed beforehand but will be adaptively updated.
\smallskip

\noindent\emph{Tensor ranks:} A second issue is that, with a basic scheme as in \eqref{richardson}, the tensor ranks in the representation of $\bu^n$ may increase rapidly with respect to $n$: in the addition of the low-rank representations of two vectors, their ranks are added, whereas the action of an operator in low-rank form leads to a multiplication by its ranks. Although methods using fixed-rank representations of all iterates can be constructed \cite{Grasedyck:13,KressnerTobler:11}, which essentially attempt to obtain an approximation by optimizing each component in this fixed tensor representation, the high degree of nonlinearity in the resulting problems makes their convergence analysis an extremely delicate problem. In addition, such approaches then still need to be coupled with a procedure for rank adaptation for a given target accuracy.
\smallskip
 
Thus, it is natural to let both the set of activated basis indices and the representation ranks evolve over the course of the iteration and hence to gradually refine both in parallel. 
The methods of this type that have been studied so far share the construction principle of combining mappings $\Fcal_n$, providing an error reduction, combined with mappings $\Rcal_n$ that perform a re-approximation with complexity reduction,
\begin{equation}\label{itertemplate}
   \bu^{n+1} = \mathcal{R}_n(\mathcal{F}_n(\bu^n)).
\end{equation}
Here $\Fcal_n$ can, in principle, be any procedure providing a guaranteed error reduction, such as a fixed-point iteration as in \eqref{Richardson}. Since we want this error reduction to happen with respect to the exact solution $\bu$ in $\ell_2(\Ical)$, in general this necessitates that $\br^n$, and hence $\bu^{n+1}$, have larger support than $\bu^n$, and that hierarchical ranks need to grow during the iteration.
Accordingly, $\Rcal_n$ needs to both eliminate extraneous basis indices and reduce the ranks of iterates.

This entails a compromise between preserving a sufficient error reduction while at the same time preventing too large a growth in the representation costs of the iterates.
Ideally, the reduction operation $\mathcal{R}_n$ should be adjusted to the error reduction so as to ensure convergence of the iteration with (up to a multiplicative constant) the {\em best achievable total computational costs} in terms of the number of operations, that is, to ensure asymptotically optimal complexity.
The basic template \eqref{itertemplate} has been used in the construction of the first adaptive wavelet methods with convergence rates \cite{Cohen:01,Cohen:02}, where optimality was established. 
Procedures of the form \eqref{itertemplate} are also a core ingredient in many iterative methods operating on low-rank representations, for instance those proposed in \cite{Beylkin:02,Hackbusch:08,Khoromskij:11-1,KressnerTobler:11,Ballani:13,BNZ:14}.   
The choices of $\Rcal_n$ employed in these contributions, corresponding to truncation of to fixed ranks or to variable ranks with ad-hoc tolerances, however, do not ensure a suitable compromise between convergence and complexity. 

In Section \ref{sec:complexity}, we consider in detail low-rank solvers with complexity bounds which ensure that this compromise is met
for both scenarios (I) and (II). A crucial role is played by a suitably abstracted version of a ``Coarsening Lemma''  that appeared first
in \cite{Cohen:01} in the context of adaptive wavelet methods.
At this point, we next discuss briefly some common aspects of choosing $\Rcal_n$ and $\Fcal_n$ that can be used to ensure convergence of $\bu^n$ to $\bu$. By the above choice of reference basis functions, this  is equivalent to convergence of the method to the exact solution $u$ in the energy space $V$. This requires on the one hand the identification of a sequence of finite subsets $\Lambda^n = \Lambda_1^n \times \cdots \times \Lambda_d^n$ such that $\supp (\bu^n) \subseteq\Lambda^n$; and on the other hand, for a dimension tree assumed to be given, finding hierarchical ranks and constructing representation coefficients of a hierarchical tensor representation of $\bu^n$.

Two basic ways of constructing $\mathcal{F}_n$ in \eqref{itertemplate} have been considered in the literature: the first is a perturbed iteration \eqref{richardson}, where for any given index set $\Lambda^n$ and ranks $\rr{r}^n$ of $\bu^n$, one needs to provide a routine that can produce finitely supported $\br^n$ in hierarchical format such that $\norm{\br^n - (\bA \bu^n-\bbf)}\leq \eta$ for any $\eta >0$. For appropriate choices of $\eta$ depending on $n$, this generally requires an enlarged product set $\Lambda^{n+1} = \Lambda_1^{n+1} \times\cdots\times \Lambda_d^{n+1}\supset\Lambda^n$ to satisfy $\supp(\br^n),\supp(\bu^{n+1})\subseteq \Lambda^{n+1}$.
Moreover, the tensor representation of $\bu^{n+1}$ resulting from performing \eqref{richardson} in low-rank format generally has larger ranks $\rr{r}^{n+1}$.
This strategy is considered in \cite{BD,BD2,BD3,BCD15}.

The second basic construction of $\mathcal{F}_n$, which is conceptually closer to adaptive finite element methods, uses sequential Galerkin solves: 
for given $\Lambda^n$, define $\bu^n$ as the corresponding Galerkin solution, $\bu^n := \bu_{\Lambda^n}$. Then, find a product set $\Lambda^{n+1} \supseteq \Lambda^n$ such that $\norm{ (\bA\bu^n - \bbf)|_{\Lambda^{n+1}}} \geq \tau \norm{ \bA\bu^n - \bbf }$ for a fixed $\tau \in (0,1)$. This can be achieved by a sufficiently accurate finitely supported approximation $\br^n$ as in the case of \eqref{richardson}. The process of approximating the Galerkin solution $\bu_{\Lambda^{n+1}}$, e.g., by a Krylov space method, then leads to new hierarchical representation ranks $\rr{r}^{n+1}$.
This strategy has been analyzed in \cite{AU:18}.

Accordingly, $\Rcal_n$ is chosen as a composition of a rank reduction as discussed in Section \ref{ssec:properties}, for instance by hard thresholding or soft thresholding of hierarchical singular values, and a subsequent reduction of active basis indices. 
 The concrete realizations of $\Fcal_n$ and $\Rcal_n$ depend on the type of the problem \eqref{varprob}. They rely on two
 essential ingredients discussed next.

 \subsection{Two Core Ingredients}\label{ssec:core}

The main distinction between the two problem scenarios (I) and (II) lies in the structure of the respective energy space $V$.
This has an essential effect on the approximate evaluation of $\bA\bu^n$.
In the case (II) where the space on which the variational problem is posed is a tensor product space endowed with a cross norm, such as $V = H^1_0(\Omega)\otimes\bigl( \bigotimes_{i=1}^d L_2((-1,1),\frac12 dy_i) \bigr)$ in \eqref{parametric}, a choice of Riesz bases $\Phi_0, \Phi_1,\ldots,\Phi_d$ in each of the $(d+1)$ factors leads to a {\em tensor product Riesz basis} $\Phi$ of $V$ (indexed, say, by $\N^{d+1}$), and hence $S_\Phi = S_{\Phi_0} \otimes \cdots S_{\Phi_d}$. As a consequence, low-rank structures in $A$ and $u$ are preserved in their representations $\bA\colon \ell_2(\N^{d+1})\to \ell_2(\N^{d+1})$ and $\bu \in \ell_2(\N^{d+1})$; for instance, both $A$ in \eqref{parametric} and the corresponding $\bA$ can be written as sums of $d+1$ Kronecker products. We then aim to find, for instance, a low-rank representation of the coefficient sequence $\bu$ as a tensor of order $d+1$ in hierarchical low-rank format.

\subsubsection{Finite rank adaptive scaling}
In the case (I) of \eqref{diffusion},  the mapping $S_\Phi$ can, in general, {\em not} be chosen to be of Kronecker rank one. To see this, it is instructive to consider the case $A= -\Delta$ on $\Omega = (0,1)^2$ in \eqref{diffusion}. A simple choice of basis can be derived from the $L_2$-normalized eigenfunctions $\tilde\varphi_{k_1,k_2}$ of $A$ given by $\tilde\varphi_{k_1,k_2}(x_1,x_2) = 2 \sin(\pi k_1 x_1) \sin(\pi k_2 x_2)$ for $k_1, k_2 \in \N$, with corresponding eigenvalues $\pi^2(k_1^2 + k_2^2)$. The functions $\varphi_{k_1,k_2}(x_1,x_2) = 2 \pi^{-2} (k_1^2 + k_2^2)^{-1/2}  \sin(\pi k_1 x_1) \sin(\pi k_2 x_2)$ are thus an orthonormal basis (that is, a Riesz basis with $c=C = 1$) of $V = H^1_0(\Omega)$. The resulting mapping $S_\Phi$ for $\Phi = \{ \varphi_{k_1,k_2} \}_{k\in\N^2}$ then cannot be written as a finite sum of Kronecker products due to the presence of the factor $(k_1^2 + k_2^2)^{-1/2}$. This reflects the fact that $H^1_0(\Omega)$ is not endowed with a cross norm, but can rather be characterized as the \emph{intersection} of product spaces $H^1_0(0,1)\otimes L_2(0,1)\cap L_2(0,1)\otimes H^1_0(0,1)$ with its natural norm.

As shown by \eqref{intersect}, the situation for larger $d$ and more general Riesz bases is analogous, i.e., Riesz bases for
the energy space $V$   do no longer consist of separable functions.  As a result the operator $\bA$ has in this case {\em infinite rank}.
In order to still facilitate in the end
an efficient error-controlled approximate application of the operator representation $\bA$ requires a judicious {\em preconditioning strategy} 
that realizes a proper balance between rank growth and stability needed to maintain convergence in \eqref{itertemplate}.

To that end,  a particularly well suited building block are {\em tensor product wavelet bases}, constructed from an $L_2$-orthonormal wavelet basis $\{ \psi_\lambda \}_{\lambda \in \vee}$ on a one-dimen\-sional domain $I_1 \subseteq \R$, assuming $\Omega := I_1\times\cdots\times I_1$. Here $\vee$ is an appropriate set of scale-space indices, where the level of $\lambda \in \vee$ is denoted by $\abs{\lambda}$. Assuming sufficient regularity of the $\psi_\lambda$, with $\Ical := \bigtimes_{i=1}^d \vee$ and $\Psi_{\lambda} := \psi_{\lambda_1}\otimes\cdots\otimes \psi_{\lambda_d}$, one has that $\Phi := \{ \norm{\Psi_{\lambda}}_{H^1_0}^{-1}\Psi_{\lambda} \}_{\lambda=(\lambda_1,\ldots,\lambda_d) \in \Ical}$ is a Riesz basis of $H^1_0(\Omega)$. Here one has the proportionality $\norm{\Psi_{\lambda}}_{H^1_0}^{-1} \sim \omega_\lambda := (2^{2\abs{\lambda_1}} + \ldots + 2^{2\abs{\lambda_d}})^{-1/2}$ uniformly in $\lambda$, which leads to a similar lack of separability of $S_\Phi$,
see e.g. \cite{Cohen:01,SSprod:08}.

Instead, the representation $\bT := S_\Psi' A S_\Psi$ of $A$ with respect to the $L_2$ orthonormal basis $\Psi$ does have 
  a simple low-rank structure.  For instance, in the case of the Laplacian, where $\Delta = \partial_{1}^2\otimes I + I\otimes \partial_2^2$ in the above example with $d=2$.  However, $\bT$ is not bounded on $\ell_2(\cI)$ and hence cannot be used 
  in \eqref{Richardson} or its perturbed version \eqref{richardson}. The low-rank structure 
  is unfortunately lost   in the boundedly invertible representation 
 \be
 \label{ST}
 \bA = \bS^{-1}\bT\bS^{-1},\quad (\bS)_{\lambda,\lambda'}= \delta_{\lambda,\lambda'} (2^{2\abs{\lambda_1}} + \ldots + 2^{2\abs{\lambda_d}})^{-1/2},\,\,\lambda \in \cI,
 \ee
 after an ideal scaling that
 takes the topology of $V$ into accout and ensures the necessary mapping property of $\bA$ relating errors to residuals. 
 
 A crucial point for what follows, however, is that in the case $V= H^1_0(\Omega)$, the departure from the original low-rank structure is still controllable in the sense that $S_\Phi$ has efficient and explicitly computable low-rank approximations.
One class of such approximations is provided by {\em exponential sum approximations} of the ideal non-separable scaling factors $\omega_\lambda$.
This is exemplified by the following result, paraphrased from \cite[Thm.\ 9]{BD2}.

\begin{theorem}
\label{thm:expsumrel}
There exist positive sequences $(w_j)_{j\in\N}$, $(t_j)_{j\in\N}$ such that for all $\lambda \in \Ical =\bigotimes_{i=1}^d \vee$,
\begin{equation}\label{expsumbasic}
   \tilde\omega_\lambda^{(\infty)} := \sum_{j=1}^\infty w_j \prod_{i=1}^d e^{-t_j 2^{2 \abs{\lambda_j}}} < \infty
\qquad\text{satisfies}\qquad
  \bigabs{ \omega_\lambda - \tilde\omega_\lambda^{(\infty)} }  \leq \frac{\omega_\lambda}{2} ,
\end{equation}
and in addition, there exists $C>0$ such that with
\be
\label{tildeomega}
\tilde\omega_\lambda^{(m)} := \sum_{j=1}^m w_j \prod_{i=1}^d e^{-t_j 2^{2 \abs{\lambda_i}}},
\ee
 one has
\begin{equation}\label{expsumlrapprox}
  \bigabs{ \omega_\lambda - \tilde\omega_\lambda^{(m)}  } \leq \frac{\omega_\lambda}{2} , 
  \quad \bigabs{ \tilde\omega_\lambda^{(\infty)} - \tilde\omega_\lambda^{(m)}  } \leq \eta \omega_\lambda, \qquad \lambda \in \Lambda,
\end{equation}
for any finite $\Lambda \subset \Ical$, provided that 
\be
\label{m}
m \geq  C \bigl(1 + \abs{\log \eta} + \max_{\lambda \in \Lambda} \abs{\log \omega_\lambda} \bigr).
\ee
\end{theorem}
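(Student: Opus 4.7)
The starting point is the elementary integral identity
\[
  \sigma^{-1/2} = \frac{2}{\sqrt{\pi}}\int_0^\infty e^{-t^2 \sigma}\,dt,\qquad \sigma>0,
\]
which, applied to $\sigma = \sigma_\lambda := \sum_{i=1}^d 2^{2|\lambda_i|}$, yields by separability
\[
  \omega_\lambda \;=\; \frac{2}{\sqrt{\pi}}\int_0^\infty \prod_{i=1}^d e^{-t^2 2^{2|\lambda_i|}}\,dt.
\]
This representation is crucial: any quadrature rule $\int_0^\infty F(t)\,dt \approx \sum_j \tilde w_j F(\tilde t_j)$ applied to this integral produces an expression of exactly the form claimed, with weights $w_j = \frac{2}{\sqrt{\pi}}\tilde w_j$ and nodes $t_j = \tilde t_j^2$. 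So the plan is to choose a quadrature that converges exponentially for the class of integrands $t \mapsto e^{-t^2 \sigma}$ uniformly in $\sigma$ in a suitable \emph{relative} sense.

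The natural tool is a sinc quadrature after the substitution $t = e^{s}$, transforming the integral to
\[
  \omega_\lambda \;=\; \frac{2}{\sqrt{\pi}}\int_{-\R}^\infty e^{s}\,e^{-e^{2s} \sigma_\lambda}\,ds.
\]
The integrand is holomorphic in a strip about $\R$ and decays doubly exponentially as $s\to+\infty$ and single-exponentially as $s\to-\infty$. Applying a sinc rule with step size $h$ centered at $s_0 = -\tfrac12 \log \sigma_\lambda$ (or, to obtain a rule independent of $\lambda$, centering at $0$ and absorbing the shift) gives an infinite sum that defines $\tilde\omega_\lambda^{(\infty)}$. Classical sinc analysis (Stenger's theorem, cf.\ the Braess--Hackbusch exponential sum construction for $x^{-1/2}$) yields a \emph{relative} error of the form $\bigabs{\omega_\lambda - \tilde\omega_\lambda^{(\infty)}} \le c_0(h)\,\omega_\lambda$ with $c_0(h) = O(e^{-c/h})$; choosing $h$ once and for all small enough secures $c_0(h)\le \tfrac12$, which is the first inequality in \eqref{expsumbasic}.

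Next I would truncate the infinite sinc sum to the $m$ nodes closest to $s_0 = 0$, defining $\tilde\omega_\lambda^{(m)}$. The truncation error is controlled by the tails of the integrand transformed to the $s$-variable. Because $\sigma_\lambda \in [1,\, d\,2^{2 L}]$ with $L := \max_{\lambda\in\Lambda}\max_i |\lambda_i|$, the ``active'' region of $s$ where the integrand is not exponentially small has width $O(\log\sigma_\lambda) = O(|\log\omega_\lambda|)$. Standard estimates on the Gaussian tails and on the right tail (double-exponential decay) then give, after dividing by $\omega_\lambda$, a relative truncation error bounded by $C_1 \exp\bigl(-c_1 h(m - C_2(1+\max_{\lambda\in\Lambda}|\log\omega_\lambda|))\bigr)$. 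Demanding this to be $\le \eta$ forces exactly
\[
 m \;\geq\; C\bigl(1 + |\log\eta| + \max_{\lambda\in\Lambda}|\log\omega_\lambda|\bigr),
\]
which is the quantitative bound \eqref{m}. Combining with the first step gives the second inequality in \eqref{expsumlrapprox} (truncation), and adding the bound from the first step yields the absolute bound $|\omega_\lambda - \tilde\omega_\lambda^{(m)}|\le \omega_\lambda/2$.

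The main technical obstacle is \emph{uniform relative} control: since $\omega_\lambda$ spans many orders of magnitude over $\Lambda$, an absolute quadrature error would be useless. The shifting of the sinc nodes (or, equivalently, the sharp dependence of the required $m$ on $\max_\lambda |\log\omega_\lambda|$) is precisely what converts the absolute sinc error estimate into a relative one, and correctly tracking the constants there — particularly ensuring the contribution of the left tail (where $e^s\to 0$ makes $F$ small but not doubly-exponentially so) does not spoil the relative bound — is the delicate point of the argument. Everything else reduces to invoking the well-developed theory of exponential sum approximations to $x^{-1/2}$.
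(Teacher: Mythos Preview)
The paper does not prove this theorem; it is quoted as a paraphrase of \cite[Thm.~9]{BD2}. Your approach---the integral representation $\sigma^{-1/2}=\tfrac{2}{\sqrt\pi}\int_0^\infty e^{-t^2\sigma}\,dt$ applied to $\sigma_\lambda=\sum_i 2^{2|\lambda_i|}$, the substitution $t=e^s$, sinc quadrature with a fixed step $h$, and truncation---is exactly the Braess--Hackbusch construction that underlies the cited result, and your relative-error reasoning is correct: the strip norm of the transformed integrand scales like $\sigma^{-1/2}$, giving a uniform relative discretization error $O(e^{-c/h})$, while the left sinc tail contributes $O(e^{-J_-h})$ absolutely and hence $O(e^{-J_-h}\sigma^{1/2})$ relatively, forcing $J_-\gtrsim h^{-1}(|\log\eta|+|\log\omega_\lambda|)$. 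One small point of presentation: the statement fixes a \emph{single} sequence $(w_j,t_j)_{j\in\N}$ and defines $\tilde\omega_\lambda^{(m)}$ as its first $m$ partial sums, so you should commit to an explicit enumeration of the sinc nodes $\{jh:j\in\Z\}$ by $\N$ (for instance $0,-1,1,-2,2,\dots$) rather than describe the truncation as ``the $m$ nodes closest to $0$''.
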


By \eqref{expsumbasic}, $\tilde\Phi := \{ \tilde\omega^{(\infty)}_\lambda \Psi_\lambda \}_{\lambda \in \Ical}$ is then also a Riesz basis of $H^1_0(\Omega)$, with constants $c_{\tilde\Phi},C_{\tilde\Phi}$ independent of $d$. For finite subsets of $\Ical$, the corresponding mapping $S_{\tilde\Phi}$ has efficient low-rank approximations whose Kronecker rank increases logarithmically in the desired error and linearly in 
\[
  \max_{\lambda \in \Lambda} \abs{\log \omega_\lambda} \sim \max_{\substack{j=1,\ldots,d\\ \lambda \in \Lambda}} \abs{\lambda_j}\,.
  \]
This quantity corresponds roughly to the largest frequency of an activated wavelet basis element. For problems on $H^1_0(\Omega)$, we can thus use the sequence space formulation $\bA \bu = \bbf$ with
\be
\label{A1}
  \bA = S_{\tilde\Phi}' A S_{\tilde\Phi} =: \tilde \bS^{-1}_\infty \bT \tilde \bS^{-1}_\infty, \quad \bu = S^{-1}_{\tilde\Phi} u, \quad \bbf = S'_{\tilde \Phi} f,
\ee
where $\tilde \bS^{-1}_\infty= {\rm diag} (\tilde\omega_\lambda^{(\infty)})_{\lambda\in\cI}$.
Then, with $\tilde\Phi_m := \{ \tilde\omega^{(m)}_\lambda \Psi_\lambda \}_{\lambda \in \Ical}$, the infinite matrix $\bA^{(m_1,m_2)} := S_{\tilde\Phi_{m_1}}' A S_{\tilde\Phi_{m_2}}$ provides an explicit low-rank approximation of $\bA$ when $m_1$, $m_2$ are chosen appropriately according to \eqref{expsumlrapprox}. For instance, in \eqref{diffusion} with $M = I$, using $L_2$-orthonormality of the $\psi_{\lambda_i}$, we have
\begin{align}
&  (\bA^{(m_1,m_2)})_{\lambda,\mu} =\!\! \sum_{j_1=1}^{m_1}\! \sum_{j_2=1}^{m_2} \!w_{j_1} w_{j_2} \!\sum_{k=1}^d e^{-t_{j_1} 2^{2 \abs{\lambda_{k}}} - t_{j_2} 2^{2 \abs{\mu_{k}}}} \langle \psi'_{\lambda_i}, \psi'_{\mu_i} \rangle\nonumber\\
& \qquad\qquad\qquad\qquad \prod_{i \neq k} e^{-t_{j_1} 2^{2 \abs{\lambda_{i}}} - t_{j_2} 2^{2 \abs{\mu_{i}}}} \delta_{\lambda_i,\mu_i}.
\end{align}
Note that although this gives $d m_1 m_2$ terms of Kronecker rank one, $\bA^{(m_1,m_2)}$ can in fact be represented as an operator in hierarchical tensor format with hierarchical ranks bounded by $2 m_1 m_2$.
One can proceed similarly for $\bbf$, and the approximate coefficients of $u$ with respect to the $L_2$-basis\footnote{Schemes for performing SVD-type approximations in $H^1$ directly using $L_2$-basis coefficients have recently been proposed in \cite{AN:18}; however, this leads to weakened quasi-optimality properties.} $\{ \Psi_\lambda\}_{\lambda\in \Ical}$ can be recovered by applying $S_{\tilde\Phi_{m}}$ with sufficiently large $m$ to any finitely supported approximation of $\bu$.

In summary, in problems such as \eqref{diffusion}, one arrives at a coupling between the costs of approximate low-rank representations of $\bA$ and the activated finite subsets of basis indices. In terms of the discretizations defined by these subsets, this means that preconditioning the problem in low-rank form to arrive at a well-conditioned matrix equation becomes more costly in terms of the arising ranks as the discretization is refined. How to dynamically adjust the choice of $m_1,m_2$ for given target tolerances $\eta$ will be taken up again in Section \ref{sec:complexity}.
This issue does not arise in scenario (II), since there $V$ is endowed with a cross-norm.

\subsubsection{Contractions and tensor coarsening}\label{ssec:reduction}
Thus, the reduction operator $\Rcal_n$ in \eqref{itertemplate} must involve, aside from rank-reduction, a mechanism to
control represenation complexity in terms of activated basis functions, which is the second core  component in all varients 
of \eqref{itertemplate}. Specifically,
this requires   the identification of a sequence of finite subsets $\Lambda^n = \Lambda_1^n \times \cdots \times \Lambda_d^n$ such that $\supp (\bu^n) \subseteq\Lambda^n$ that warrants sufficient accuracy.
 In purely sparsity-based adaptive methods, as introduced in \cite{Cohen:01,Cohen:02}, such a reduction of indices can be done by keeping the entries of largest absolute value. In our context, this is not an option: on one hand, directly sorting the high-dimensional coefficient sequence is not possible for complexity reasons; on the other hand, the tensor decomposition requires the product structure of index sets to be preserved. A possible way around these two restrictions, proposed in \cite{B} and analysed further in \cite{BD,BD2}, is to select the relevant basis indices based on the {\em one-dimensional sequences}
\[
 \pi^{(i)}(\mathbf{v}) = \bigl( \pi^{(i)}_{\lambda_i} (\mathbf{v})
 \bigr)_{\lambda_i\in\Ical_i}  
  \in \ell_2(\Ical_i),
\]
where
\[
  \pi^{(i)}_{\lambda_i} (\mathbf{v}) := \Bigl(\sum_{%
 \lambda_1,\ldots,\lambda_{i-1},\lambda_{i+1},\ldots,\lambda_d} \abs{v_{ \lambda_1,\ldots,\lambda_{i-1},\lambda_i, \lambda_{i+1},\ldots,\nu_d}}^2 \Bigr)^{\frac{1}{2}} ,
\]
which were termed \emph{contractions} in \cite{BD} and can also be regarded as one-dimensional densities of the tensor.
These can be efficiently evaluated using the identity\footnote{A technique of using \eqref{eq:contreval} for steering adaptivity in high dimensions was also subsequently developed independently in \cite{EPS17}.}
\begin{equation}\label{eq:contreval}
 \pi^{(i)}_\lambda (\mathbf{v}) = \Bigl( \sum_{k}
 \bigabs{\mathbf{U}^{(i)}_{\lambda, k}}^2 \bigabs{\sigma^{(i)}_{k}}^2
 \Bigr)^{\frac{1}{2}}  ,\quad \lambda \in \Ical_i,
\end{equation}
in terms of the ${H}$SVD mode frames $\mathbf{U}^{(i)}$ and the corresponding sequences of singular values $\sigma^{(i)}$.
One easily obtains the following {\em quasi-optimality} result for coarsening based on these sequences.

\begin{proposition}
\label{prop:coarsen}
Let $\Lambda(\bv; N) = \Lambda^{(1)}(\bv;N)\times \cdots\times  \Lambda^{(d)}(\bv;N) \subset \Ical$ be the product index set corresponding to the $N$ largest elements of $\{ \pi^{(i)}_\lambda(\bv) \colon i=1,\ldots,d, \; \lambda \in \Ical_i\}$. Then for any $\hat\Lambda = \hat\Lambda_1\times\cdots\times\hat\Lambda_d$ with $\sum_{i=1}^d \#\hat\Lambda_i \leq N$, with
\begin{equation}\label{tensorcoarsen_remainder}
	s_{N}(\bv) := \biggl( \sum_{i=1}^d \sum_{\lambda \in \Ical_i \setminus \Lambda^{(i)}(\bv;N)} \abs{\pi^{(i)}_\lambda (\bv) }^2 \biggr)^{\frac12}
\end{equation}
one has
\[
  \norm{ \bv - ( \bv|_{\Lambda(\bv;N)}) } \leq s_{N}(\bv) \leq \sqrt{d} \norm{ \bv - (\bv|_{\hat\Lambda} ) } . 
\]
\end{proposition}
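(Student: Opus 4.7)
The plan is to prove the two inequalities separately, both reducing to a bookkeeping argument about product index sets and the defining identity $\sum_{\lambda_i \in \Ical_i} |\pi^{(i)}_{\lambda_i}(\bv)|^2 = \|\bv\|^2$.

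For the first inequality, I would observe that the complement of a product set admits a union representation: $\Ical \setminus \Lambda(\bv;N) = \bigcup_{i=1}^d \{\lambda \in \Ical : \lambda_i \notin \Lambda^{(i)}(\bv;N)\}$. A union bound then gives
\begin{equation*}
  \|\bv - \bv|_{\Lambda(\bv;N)}\|^2 = \sum_{\lambda \notin \Lambda(\bv;N)} |v_\lambda|^2 \le \sum_{i=1}^d \sum_{\substack{\lambda \in \Ical \\ \lambda_i \notin \Lambda^{(i)}(\bv;N)}} |v_\lambda|^2 = \sum_{i=1}^d \sum_{\lambda \in \Ical_i \setminus \Lambda^{(i)}(\bv;N)} |\pi^{(i)}_\lambda(\bv)|^2,
\end{equation*}
where the last equality is just the definition of the contractions. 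The right-hand side is exactly $s_N(\bv)^2$, which yields the claimed bound after taking square roots.

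For the second inequality, the key point is that $\Lambda(\bv;N)$ is a \emph{greedy} selection: by construction, the pairs $(i,\lambda)$ with $\lambda \in \Lambda^{(i)}(\bv;N)$ are the $N$ pairs of largest value in the pooled collection $\{\pi^{(i)}_\lambda(\bv) : i=1,\ldots,d,\,\lambda\in\Ical_i\}$. Given any competing product set $\hat\Lambda = \hat\Lambda_1\times\cdots\times\hat\Lambda_d$ with $\sum_i \#\hat\Lambda_i \le N$, the family of pairs $\{(i,\lambda) : \lambda \in \hat\Lambda_i\}$ has cardinality at most $N$, so by greediness the complementary sum satisfies
\begin{equation*}
  s_N(\bv)^2 = \sum_{i=1}^d \sum_{\lambda \in \Ical_i \setminus \Lambda^{(i)}(\bv;N)} |\pi^{(i)}_\lambda(\bv)|^2 \le \sum_{i=1}^d \sum_{\lambda \in \Ical_i \setminus \hat\Lambda_i} |\pi^{(i)}_\lambda(\bv)|^2.
\end{equation*}
I would then bound each of the $d$ inner sums by $\|\bv - \bv|_{\hat\Lambda}\|^2$: expanding the contraction and using $\{\lambda : \lambda_i \notin \hat\Lambda_i\} \subseteq \Ical \setminus \hat\Lambda$ gives
\begin{equation*}
  \sum_{\lambda_i \in \Ical_i \setminus \hat\Lambda_i} |\pi^{(i)}_{\lambda_i}(\bv)|^2 = \sum_{\substack{\lambda \in \Ical \\ \lambda_i \notin \hat\Lambda_i}} |v_\lambda|^2 \le \sum_{\lambda \notin \hat\Lambda} |v_\lambda|^2 = \|\bv - \bv|_{\hat\Lambda}\|^2.
\end{equation*}
Summing over $i=1,\ldots,d$ produces the factor $d$, and taking square roots finishes the proof.

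I do not anticipate a real obstacle here; the argument is essentially a two-line union bound combined with one-line greedy optimality. The only small subtlety worth writing out carefully is that the greedy step works even when $\sum_i \#\hat\Lambda_i$ is strictly less than $N$, since having fewer selected pairs can only enlarge the complementary sum. This is why the hypothesis is an inequality rather than equality, and it causes no trouble in the bound.
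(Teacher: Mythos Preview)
Your argument is correct and is precisely the intended one: the paper does not spell out a proof but only remarks that the result is ``easily obtained,'' and the two steps you give---the union bound on the complement of a product set for the first inequality, and greedy optimality of the $N$ largest contraction values combined with $\{\lambda:\lambda_i\notin\hat\Lambda_i\}\subseteq\Ical\setminus\hat\Lambda$ for the second---are exactly what underlies that remark. Your comment on the strict-inequality case $\sum_i\#\hat\Lambda_i<N$ is also the right observation to make.
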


Note that this result is analogous to Theorem \ref{thm:hier_tensor_recompression_est}, and as discussed in further detail in Section \ref{sec:complexity}, the effect of the two types of reduction operations combined in $\mathcal{R}_n$ can be estimated using essentially the same techniques.
For finitely supported input sequences $\bw$ on $\Ical$, in both cases one can implement computational routines with analogous properties:
\begin{itemize}
	\item $\recompress(\bw;\eta)$, which performs ${H}$SVD hard thresholding, such that $\bw_\eta:= \recompress(\bw;\eta)$ has hierarchical ranks $\rr{r}_\eta:=\rr{r}_\mathbb{E}(\bw_\eta)$ with $\max_i r_{\eta,i}(\bw_\eta)$ \emph{minimal} such that $t_{\rr{r}_\eta}(\bw) \leq \eta$, and hence by Theorem \ref{thm:hier_tensor_recompression_est}, $\|\bw - \bw_\eta \|_{\ell_2(\cI)}\le \eta$. This routine has the cost of performing the ${H}$SVD as discussed in Remark \ref{rem:compl}. One has the quasi-optimality property
\[
   \|\bw- \coarsen(\bw;\eta)\| \leq \sqrt{2d-3} \min_{\rr{r}_\mathbb{E}(\hat\bw) \leq {\rr{r}_\eta}} \norm{ \bw - \hat\bw } \,.
\]
	
	\item $\coarsen(\bw;\eta)$, which determines the product index set $\Lambda(\bw;N_\eta)$ as in Proposition \ref{prop:coarsen} with $N_\eta$ minimal such that $s_{N_\eta}(\bw) \leq \eta$, which ensures $\|\bw- \coarsen(\bw;\eta)\| \le \eta$. This requires computing $\pi^{(i)}_\lambda(\bw)$ from the ${H}$SVD for all $i=1,\ldots,d$, $\lambda \in \Ical_i$, and (quasi-)sorting these values. From Proposition \ref{prop:coarsen}, one has the quasi-optimality property
\[
  \|\bw- \coarsen(\bw;\eta)\| \leq \sqrt{d} \min_{\hat\Lambda} \norm{ \bw - (\bw|_{\hat\Lambda} ) } ,
\] 
where the minimum is over all $\hat\Lambda = \hat\Lambda^1\times\cdots\times\hat\Lambda^d$ such that $\sum_i\#\hat\Lambda_i\leq N_\eta$.
	
\end{itemize}

An important feature of the resulting schemes of the form \eqref{itertemplate}, with appropriately adjusted $\Fcal_n$ and $\Rcal_n$, is their \emph{universality:} they require no information on the approximability of the sought solution $\bu$ to produce quasi-optimal solutions.
In order to assess the performance of such methods, however, we hypothetically assume $\bu$ to have certain approximability properties and then show that the method under consideration will have the expected complexity in such a case without using, however,
any a priori knowledge of such approximability properties.

\subsection{Approximation Classes}\label{ssec:apprcl}

The natural approximability properties to consider are strongly tied to the choice of $\Rcal_n$, that is, to the routines
$\recompress$ and $\coarsen$; in our case, these properties are the following intrinsic features of $\bu$:
\begin{enumerate}[(i)]
\item The convergence rate of the best approximation in $\ell_2(\cI)$ of hierarchical rank $\rr{r}$ to $\bu$ as $\rr{r}$ increases, for instance, as $\abs{\rr{r}}_\infty\to \infty$.
\item The asymptotic decay of the decreasing rearrangement of all values $\pi^{(i)}_\lambda(\bu)$ for $i=1,\ldots,d$ and $\lambda \in \Ical_i$.
\end{enumerate}

The notion of approximation classes  is a classical means to quantify approximation properties. 
Regarding (i), we follow \cite{BD} and
consider positive and strictly
increasing sequences  $\gamma = \bigl(\gamma(n)\bigr)_{n\in \N_0}$ with $\gamma(0)=1$ and $\gamma(n)\to\infty$ as $n\to\infty$.
For $\bv \in \spl{2}(\Ical)$ let
\[
  \norm{\bv}_{\AH{\gamma}} \coloneqq \sup_{r\in\N_0} \gamma({r})\,\inf_{\max{\rr{r}_\mathbb{E}(\bw)}\leq r} \norm{\bv - \bw} ,
\]
where $\rr{r}_\mathbb{E}(\bw)$ reduces to $\rank(\bw)$ when $d=2$ and $\Ical = \Ical_1\times \Ical_2$, as in the case of separating spatial and parametric variables in the problem class (II). It will be seen in the next section that relevant sequences $\gamma$
cover a wide range from algebraic to (sub-)exponential rates.
This gives rise to the approximation classes
\be
\label{rclass}
\AH{\gamma} \coloneqq  \bigl\{\bv\in \spl{2}(\sidx\times\pidx) : 
 \norm{\bv}_{\AH{\gamma}} <\infty \bigr\} .
\ee
For $\bu \in \AH{\gamma}$, a hierarchical tensor best approximation is guaranteed to achieve accuracy $\varepsilon$ with hierarchical ranks $\rr{r}_\varepsilon$ whenever
\be
\label{maxr}
   \max \rr{r}_\varepsilon \geq \gamma^{-1}\big( \norm{\bv}_{\AH{\gamma}}\,{\e}^{-1}\big).
\ee
We will be especially interested in exponential-type decay of low-rank approximation errors, where with $\gamma(r) = e^{c r^\beta}$ for some $c,\beta$, for $\bv \in \AH{\gamma}$ one has
\[
  \inf_{\max{\rr{r}_\mathbb{E}(\bw)}\leq r} \norm{\bv - \bw}  \leq \norm{\bv}_{\AH{\gamma}} e^{-c r^\beta}
\] 
and accordingly
\[
  \max \rr{r}_\varepsilon \geq \bigl(c^{-1} \ln ( \norm{\bv}_{\AH{\gamma}}\, \varepsilon^{-1})\bigr)^{1/\beta}.
\]

Regarding (ii), for an approximation $\bv$ of bounded support to $\bu$, the number of nonzero coefficients $\#\supp_i \bv$ required in each tensor mode to achieve a certain accuracy depends on the best $n$-term approximability of the sequences $\pi^{(i)}(\bu)$.
 This approximability by sparse sequences is quantified by the classical {\em approximation classes} $\As = \As(\mathcal{J})$, where $s>0$ and $\mathcal{J}$ is a countable index set.  $\As$ is comprised of all $\bw\in \ell_2(\mathcal{J})$
for which the quasi-norm
\begin{equation}
\label{Asdef}
  \norm{\mathbf{\bw}}_{\As(\mathcal{J})} := \sup_{N\in\N_0} (N+1)^s
  \inf_{\substack{\Lambda\subset \mathcal{J}\\
  \#\Lambda\leq N}} \norm{\bw - \Restr{\Lambda} \bw}
\end{equation}
is finite. The approximation of a sequence $\bu \in \As$ up to an error $\varepsilon$ by a sequence with $N_\varepsilon$ nonzero entries is ensured with $N_\varepsilon \geq  \norm{\mathbf{\bw}}_{\As }^{\frac1s} \e^{-\frac1s}$. The separate sparsity of tensor mode frames for $\bu \in \ell_2(\Ical)$ can be quantified by assumptions of the form $\pi^{(i)}(\bu) \in \As(\Ical_i)$ for $i=1,\ldots,d$.

Before turning to the complexity analysis, we next give an overview of available approximability results that indicate
which approximation classes solutions to both problem types (I) and (II) can be expected 
to belong to. This will then serve  formulating proper benchmarks for the performance of  specifications of algorithms.

\section{Approximability}
\label{sec:approximability}

  In this section we discuss approximability properties of solutions to problems  
of both types (I) and (II) with repspect to  low-rank or hierarchical tensor methods partly in comparison with
best $n$-term approximation. These theoretical questions concerning the ``expressive power'' of certain approximation formats serve two purposes.
On one hand, they help judging which approximation type is best suited for which problem class. 
It is clear from the preceding discussions that low-rank or hierarchical tensor formats, due to their ``higher level of nonlinearity'' may increase the computational burden and also raise coding challenges. This should be justified
by a superior performance in terms of the resulting {\em work-accuracy balance}. 

On the other hand,
describing best possible approximability, these results provide {\em benchmarks} for assessing the performance of
versions of Algorithm \ref{alg:tensor_opeq_solve}.
 This  asks for the computational
cost required to meet   the target accuracy in a given format. It includes two basic questions, namely
\begin{itemize}
	\item 
What  is the {\em representation complexity} of a given approximation format for a given problem class, i.e., the number of degrees
of freedom required by a given approximation format to achieve the target accuracy?
\item What is the corresponding {\em computational complexity}, that is, can one devise a numerical method that certifiably achieves a given target accuracy at 
at the expense of a computational work that stays as close as possible   to the representation complexity? 
\end{itemize}
In this section we address the first question mainly for low-rank and tensor methods. 
This asks for a {\em regularity theory} in a broad sense where classical smoothness measures are
replaced by quantified approximability. Corresponding notions of {\em low-rank approximability}
would delineate the extent to which tensor methods could ideally mitigate the curse of dimensionality.
Such methods  exhibit a significantly higher level of nonlinearity than
classical best $N$-term approximation, but are more restricted than DNNs for which comparable theoretical
guarantees are not yet available.

\subsection{High-Dimensional Diffusion}
The representation complexity  for problems of the type \eqref{diffusion} is perhaps best understood 
 for the special case that the diffusion matrix $M$ is diagonal \cite{DDGS}. In fact, what matters most is the following structure
of the operator $A$ induced by the $V$-elliptic  bilinear form in this case, namely  that
\be
\label{Delta}
A = \sum_{j=1}^d I\otimes \cdots I \otimes A_j\otimes I \otimes \cdots  \otimes I,
\ee
has rank $d$ and
  the ``low-dimensional'' operators 
\be
\label{Ai}
A_i : \HH_i \to \HH_i',\quad \langle A_i u,v\rangle = a_i(u,v): \HH_i\times \HH_i \to \R, %
\ee
are $\HH_i$-elliptic. The simplest example is $A_j= - \partial_{x_j}(M_j\partial_{x_j})$ and $\HH_j= H^1_0(\Omega_j)$, $\Omega_j\subset \R^m$,
$j=1,\ldots,d$. As seen below, it is not important that the $A_j$ are second order operators. Again the energy space $V$ has then the form
\eqref{intersect} with $H^1_0(\Omega_i)$ replaced by $\HH_i$.

While the structure of $A$ may nourish hope that the solution to 
\be
\label{opA}
Au = f \quad \text{on }\, \Omega_1\times \cdots \times \Omega_d
\ee
can be accurately approximated by low-rank functions, $A^{-1}$ as a mapping from $V'$ to $V$ has infinite rank which
renders the question of quantifiable low-rank approximability less clear.
In view of \eqref{res}, it is this mapping property that determines the representation complexity.
We review in this section some results from \cite{DDGS} on quantified {\em low-rank approximability} of solutions
to high-dimensional diffusion equations of the form \eqref{opA}, \eqref{Delta}  that do establish rigorous relations between target accuracies 
and rank growth in a ``tamed canonical tensor format'', as explained below. While the canonical format is most convenient 
for the present purpose, it can be embedded in the hierarchical tensor format as well, see \cite{BSU:18}. 
The term ``tamed'' accounts for the fact that computing in this format is, in general, prone to instability.

The relevant structural properties of $A$ are conveniently reflected by its {\em eigensystem}.
Denoting by $\{e_{j,k}\}_{k\in\N}$ the set of eigenfunctions of the low-dimensional component operators $A_j$
with eigenvalues $\lambda_{j,k}$, one easily verifies that 
\be
\label{Aeigen}
A e_\nu = \lambda_\nu e_\nu,\quad  e_\nu = e_{1,\nu_1} \otimes \cdots \otimes e_{d,\nu_d},\,\,\lambda_\nu = \lambda_{1,\nu_1}+\cdots +\lambda_{d,\nu_d}.
\ee
This allows one to define the smoothness norm
\be
\label{ns}
\|v\|_s^2:= \sum_{\nu\in \N^d} \lambda_\nu^s|\langle v,e_\nu\rangle|^2.
\ee
Denoting by $\HH^t$ the space of all $v$ for which $\|v\|_t$ is finite,
the operator $A$ then acts as isometries between these scales, $\|A\|_{\mathcal{L}(\HH^{t+2},\HH^t)} = 1$, $t\in \R$.
While these norms exist for all $t\in \R$, they are equivalent with classical Sobolev norms only for a limited range of smoothness
around $t=1$, depending on the specific problem data. In particular, one always has $V= \HH^1$.

Since obviously $\lambda_\nu^{-1}$ is not separable $A^{-1}: \HH^t\to\HH^{t+2}$ has infinite rank. 
Hence, a bounded rank of data $f$ does not a priori
indicate which rank suffices for an approximate solution to warrant a given accuracy.

We will state next in more precise terms what we mean by {\em low-rank approximability}
of a function. Since we will deal with tensors in
the CP format, continuing to denote rank-one functions as $(g_1\otimes \cdots \otimes g_d)(x):= g_1(x_1)\cdots g_d(x_d)$,  we have
to incorporate means to ensure stability. To that end, we  consider
for $r\in\N$ {\em tamed} rank-$r$ tensor classes of the form
$$
\Tcal_r(\HH^t) := \Big\{ g\in \HH^t : g= \sum_{k=1}^r \bigotimes_{j=1}^d g_j^{(k)}\Big\}.
$$
endowed with the ``norm-like'' quantities
\be
\label{normt}
\tripnorm{g}_{r,t}:= \inf_{ \sum_{k=1}^r \bigotimes_{j=1}^d g_j^{(k)}=g}\Big\{ \max_{k=1,\ldots,r} \{\|g\|_t,\|g^{(k)}\|_t\}\Big\}.
\ee
Controlling  $\tripnorm{g}_{r,t}$ is to account for the fact that rank-$r$ representations are in general not unique and to
avoid cancellation effects between terms with large norms.

Despite the otherwise inherent instability of the CP format this allows us to introduce {\em approximation classes} collecting those elements in 
$\HH^t$ that are close to low-rank tensors
in a quantifiable way. To that end, consider for any $\zeta >0$  the {\em K-functional}
\be
\label{K}
K_r(v,\delta) := K_r(v,\delta,\HH^t, \HH^{t+\zeta}) := \inf_{g\in \Tcal_r(\HH^{t+\delta})}\Big\{\|v- g\|_t +\delta\tripnorm{g}_{r,t+\zeta}\Big\}.
\ee
Then $\delta^{-1}K_r(v,\delta)\le   C$ means that there exists a  rank-$r$ tensor $g$ of smoothness $t+\zeta$ for which $\|v-g\|_t\le \delta C$ and $\tripnorm{g}_{r,t+\zeta} \le C$. To relate the achievable accuracy $\delta$ to the rank $r$, consider as before a strictly increasing  sequence of positive numbers
 $\bgamma = (\gamma(n))_{n\in \N}$,  called a {\em growth sequence}.  Monotonicity of $\gamma$ ensures that the inverse $\gamma^{-1}$ exists.
We now collect in the {\em approximation class}  $\Acal^{\gamma} = \Acal^{\gamma} (\HH^t,\HH^{t+\zeta})$ all those $v\in \HH^t$
for which
the quasi-norm
\be
\label{Anorm}
\|v\|_{\cA^{\gamma}(\HH^t,\HH^{t+\zeta})} := \sup_{r\in \N_0} \gamma(r) K_r(f,\gamma(r)^{-1};\HH^t,\HH^{t+\zeta}),
\ee
is finite, i.e., those elements in $\HH^t$ which can be stably approximated by rank-$r$ tensors in $\HH^{t+\zeta}$ with accuracy
$O(\gamma(r)^{-1})$. In other words,  membership of a $v$ to $\Acal^\gamma$ means that
approximating a given $v\in \Acal^\gamma$ by a tensor in $\HH^{t+\zeta}$ within accuracy $\e >0$, is guaranteed, in analogy to
\eqref{maxr}, by the  rank 
\be
\label{reps}
r=r(\e) = \lceil \gamma^{-1}(\|v\|_{\Acal^\gamma}/\e)\rceil.
\ee

Since $A^{-1}$ has infinite rank one cannot expect any low-rank approximability of the data $f$ to be exactly inherited by the solution
 $u=A^{-1}f$. The following result says that a low-rank approximation order $O(1/\gamma(r))$ for the data $f$ implies a slightly weaker order 
 $O(1/\hat\gamma(r))$ for the solution $u$. The attenuated growth sequence $\hat\gamma(r)$ is given as follows. Let 
 $G(x)= x(\ln \gamma(x))^2$, $x>0$, and define
 \beqn
\label{hatg}
\hat\gamma := \gamma(G^{-1}(m)).
\eeqn
Since $G$ has super-linear growth, $G^{-1}$ has only sublinear growth causing $\hat\gamma(n)$ to have a somewhat weaker growth
than $\gamma$.

\begin{theorem}
\label{thm:ts1}
Assume that 
the data $f$ belong to $\Acal^\gamma(\HH^t,\HH^{t+\zeta})\subset \HH^t$
for some $0< \zeta \le 2$. Then, for $\hat\gamma$ defined by \eqref{hatg}, %
the solution $u$ of \eqref{opA} belongs to $\Acal^{\hat\gamma} (\HH^{t+2},\HH^{t+2+\zeta})\subset \HH^{t+2}$ and
for each $r\in \N$ there exists a   $u_r\in \Tcal_r(\HH^{t+2+\zeta})$ such that
\be
\label{uappr}
\|u - u_r\|_{t+2} \le C\hat\gamma(r)^{-1}\|f\|_{\Acal^\gamma(\HH^t,\HH^{t+\zeta})}, %
\ee
holds for some fixed constant $C$ independent of $r$.
\end{theorem}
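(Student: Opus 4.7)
The plan is to combine two approximations: one of the data $f$ granted by its membership in $\Acal^\gamma$, and one of the operator $A^{-1}$ built from an exponential-sum (Sinc-type) approximation of $1/x$ that exploits the Kronecker-sum structure \eqref{Delta}. This two-step strategy is natural because the rank of the final approximant is the product of the two ranks, and balancing the two error contributions is what produces the attenuated rate $\hat\gamma$.

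First, by the hypothesis $f\in\Acal^\gamma(\HH^t,\HH^{t+\zeta})$ and the definition of the $K$-functional \eqref{K}, for every $r\in\N$ there exists $g_r\in\Tcal_r(\HH^{t+\zeta})$ with
\[
\|f-g_r\|_t\le C\gamma(r)^{-1}\|f\|_{\Acal^\gamma},\qquad \tripnorm{g_r}_{r,t+\zeta}\le C\|f\|_{\Acal^\gamma}.
\]
Since $A:\HH^{t+2}\to\HH^t$ is an isometry by \eqref{Aeigen}--\eqref{ns}, this controls $\|u-A^{-1}g_r\|_{t+2}$; the remaining problem is that $A^{-1}g_r$ still has infinite rank. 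Here one uses that on the spectrum of $A$ (bounded below by a positive constant), $1/x$ admits exponential-sum approximations $\sum_{k=1}^m w_k e^{-t_k x}$ with error decaying like $e^{-c\sqrt m}$, and in a weighted form exploiting the extra smoothness $\zeta$,
\[
\Big|\lambda^{-1}-\sum_{k=1}^m w_k e^{-t_k\lambda}\Big|\le C e^{-c\sqrt m}\lambda^{-1-\zeta/2}
\]
on $[\lambda_{\min},\infty)$. Functional calculus in the eigenbasis \eqref{Aeigen} then yields
\[
B_m:=\sum_{k=1}^m w_k e^{-t_k A}=\sum_{k=1}^m w_k\bigotimes_{j=1}^d e^{-t_k A_j},
\]
a sum of $m$ Kronecker products, together with $\|(A^{-1}-B_m)g_r\|_{t+2}\le C e^{-c\sqrt m}\|g_r\|_{t+\zeta}$. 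Consequently $B_m g_r\in\Tcal_{rm}(\HH^{t+2+\zeta})$ and
\[
\|u-B_m g_r\|_{t+2}\le C\bigl(\gamma(r)^{-1}+e^{-c\sqrt m}\bigr)\|f\|_{\Acal^\gamma}.
\]

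Finally, I balance the two error contributions by choosing $m\sim c^{-2}(\ln\gamma(r))^2$, so that the exponential-sum error matches $\gamma(r)^{-1}$. The total rank is then $R=rm\sim r(\ln\gamma(r))^2=G(r)$. Inverting, for a prescribed target rank $R$ I set $r=G^{-1}(R)$, which yields error of order $\gamma(G^{-1}(R))^{-1}=\hat\gamma(R)^{-1}$, as claimed. Setting $u_R:=B_m g_r$ produces the required approximant; the tame-tensor bound $\tripnorm{u_R}_{R,t+2+\zeta}\lesssim\|f\|_{\Acal^\gamma}$ is inherited from $\tripnorm{g_r}_{r,t+\zeta}$ together with uniform operator norms of the factors $e^{-t_k A_j}$ on $\HH^{t+\zeta}$, so $u\in\Acal^{\hat\gamma}(\HH^{t+2},\HH^{t+2+\zeta})$.

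The main obstacle I anticipate is the weighted exponential-sum estimate above (or an equivalent splitting of the spectrum into moderate and high parts) that converts the extra smoothness $\zeta$ into an $\HH^{t+2}$-norm bound uniformly over the \emph{unbounded} spectrum; this requires a careful choice of Sinc nodes and weights that is robust on $[\lambda_{\min},\infty)$ and is the reason the restriction $\zeta\le 2$ appears. A secondary delicate point is keeping the tame tensor norm $\tripnorm{\cdot}_{R,t+2+\zeta}$ under control after summing many rank-one pieces, which is precisely why the $K$-functional in \eqref{K} explicitly includes the $\tripnorm{\cdot}$ penalty and why the construction uses the stable representation of $g_r$ rather than an unconstrained best low-rank approximation.
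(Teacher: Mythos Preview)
Your approach is essentially the one the paper sketches (the full proof being deferred to \cite{DDGS}): approximate $f$ by a tamed rank-$r$ tensor $g_r$, apply an exponential-sum surrogate $S_m(A)=\sum_{k=1}^m \omega_k e^{-\alpha_k A}$ of $A^{-1}$, and balance $m\sim (\ln\gamma(r))^2$ so that the total rank is $G(r)=r(\ln\gamma(r))^2$, yielding the attenuated rate $\hat\gamma$. The paper packages the operator step as Theorem~\ref{thm:expappr}, namely
\[
\|A^{-1}-S_m(A)\|_{\mathcal{L}(\HH^{t+\zeta},\HH^{t+2})}\le C_0\,e^{-\frac{\zeta\pi}{2}\sqrt{m}},
\]
which is the precise form of the ``weighted exponential-sum estimate'' you anticipate; note that the excess regularity $\zeta$ enters the \emph{rate constant} in the exponential rather than as an additional power $\lambda^{-1-\zeta/2}$ in a pointwise bound (your displayed pointwise inequality is stronger than needed and not what Braess--Hackbusch actually gives, but the conclusion $\|(A^{-1}-B_m)g_r\|_{t+2}\lesssim e^{-c\sqrt{m}}\|g_r\|_{t+\zeta}$ follows directly from Theorem~\ref{thm:expappr}).
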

The relation between $\gamma$ and  $\hat\gamma$ is illustrated
by taking
$\gamma(r)= r^\alpha$, for some positive $\alpha$. Then $\hat\gamma(r)\sim \Big(\frac{r}{\alpha^2\log r}\Big)^\alpha$,
i.e., up to a log-factor $\hat\gamma$ the rank approximation order of the solution exhibits still the same algebraic rate. Instead, for $\gamma(r)= e^{\alpha r}$ one obtains
$\hat\gamma(r)\sim e^{(\alpha r/C)^{1/3}}$ and thus a more significant relative degradation of low-rank approximability caused by $A^{-1}$.
Of course, when $f\in \Tcal_k(\HH^{t+\zeta})$ has a finite tamed rank $k$, it belongs to $\cA^\gamma(\HH^t,\HH^{t+\zeta})$ for
any growth function $\gamma$. This explains the high efficiency of low-rank approximations to the solution $u$ in such a case.

The result in Theorem \ref{thm:ts1} is in this form unusual since the approximants $u_r\in \Tcal_r(\HH^{t+2+\zeta})$ in \eqref{uappr} are not yet determined by finitely many parameters.
It remains to identify numerically viable {\em finitely parametrized} approximants $\bar u_r\in \Tcal_r(\HH^{t+2+\zeta})$ 
and bound the complexity of computing them.

We summarize next some related simplified but representative results from \cite{DDGS}. Note first that stable approximability of the data $f$
by somewhat smoother low-rank expressions in $\Tcal_r(\HH^{t+\zeta})$ can be viewed as {\em excess-regularity} of the data that, according
to Theorem \ref{thm:ts1}, is in this form inherited by the solution. Moreover, a rank-one tensor $g$ belongs to $\HH^s$ only if its factors $g_i$
belong to the corresponding component spaces $\HH^s_i$. Therefore, the excess-regularity $\zeta$ affects the
number of parameters needed to form an $\e$-accurate rank-$r$ approximant. 

More precisely, the following facts are somewhat specialized cases of the results in 
\cite{DDGS}. For convenience we assume here $1$-dimensional component domains $\Omega_i\subset \R$,  $\zeta \in (0,2]$, and $t=-1$. 
Then, for any $f\in \Acal^{\gamma}(\HH^{-1},\HH^{\zeta-1})$ and any $\e>0$ there exists  a $\bar u (\e) \in \Tcal_{\bar r(\e)}(\HH^{\zeta+1})$, which is determined by $N(\e,d)<\infty$ parameters, such that
\be
\label{appr2}
\|u-\bar u(\e)\|_1\le \e ,%
\quad \tripnorm{\bar u(\e)}_{\bar r(\e),1} 
\le C_1 \|f\|_{\Acal^{\gamma}(\HH^{-1},\HH^{\zeta-1})},
\ee
where
\be
\label{barr}
\bar  r(\e)\le  C(\zeta)\gamma^{-1}\big(\bar C\|f\|_{\Acal^{\gamma}}/\e\big)|\ln \e|^2,\quad N(\e,d) \le C_2 d\e^{-1/\zeta} \bar r(\e)^{1+\frac{1}{\zeta}}.
\ee
Since in view of \eqref{reps}, an $\e$-accurate approximation of $f$ requires at most a rank $r(\e)\lesssim \gamma^{-1}\big(
\|f\|_{\Acal^{\gamma}}/\e\big)$, the solution rank $\bar r(\e)$ is only mildly increased by a factor of the order 
of $|\ln \e|^2$. Moreover, the total number $N(\e,d)$ of
degrees of freedom exceeds an ``ideal count'' of $d\e^{-1/\zeta} \bar r(\e)$, obtained when  all tensor factors of $\bar u(\e)$ 
of smoothness $1+\zeta$ were known beforehand, by the factor $\bar r(\e)^{\frac{1}{\zeta}}$.

A pivotal constituent behind the above results is again approximation by {\em exponential sums}. Specifically, one can employ
a somewhat simpler version
of Theorem \ref{thm:expsumrel}, namely a classical result by Braess and Hackbusch \cite{BH1,BH2}  stating that
\be
\label{exps1}
\sup_{x\in [1,\infty)} \Big|\frac 1x - S_r(x)\Big| \le C e^{-\pi\sqrt{r}},\quad S_r(x)= \sum_{k=1}^r \omega_{r,k}e^{-\alpha_{r,k}x}.
\ee
Defining operator exponentials $e^{-A}v$ through the eigensystems \eqref{Aeigen}, one can prove the following result
that takes the scale-change caused by $A^{-1}$ into proper account.
\begin{theorem}
\label{thm:expappr}
There exists a constant $C_0$ such that for $t\le s\le 1$, $s-t\le 2$, one has
\be
\label{Ainv}
\|A^{-1} - S_r(A)\|_{ \mathcal{L}(\HH^t,\HH^s)} \le C_0 e^{-\frac{(2-s+t)\pi}{2}\sqrt{r}},\quad r\in \N.
\ee
\end{theorem}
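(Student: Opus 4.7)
The plan is to reduce the operator norm to a scalar approximation problem via spectral calculus, and then to interpolate between the two endpoints of the allowed smoothness range. First, because $A$ is positive and self-adjoint on $\HH^0$ with the common orthonormal eigenbasis $\{e_\nu\}$ from \eqref{Aeigen}, both $A^{-1}$ and $S_r(A)=\sum_k \omega_{r,k}e^{-\alpha_{r,k}A}$ act diagonally, by multiplying the coefficient of $e_\nu$ by $\lambda_\nu^{-1}$ and $S_r(\lambda_\nu)$, respectively. Writing $v=\sum_\nu c_\nu e_\nu$ and using the definition \eqref{ns}, I would compute
\begin{equation*}
\|(A^{-1}-S_r(A))v\|_s^2 = \sum_\nu \lambda_\nu^{s-t}\bigabs{\lambda_\nu^{-1}-S_r(\lambda_\nu)}^2\,\lambda_\nu^t|c_\nu|^2,
\end{equation*}
so that
$\|A^{-1}-S_r(A)\|_{\mathcal{L}(\HH^t,\HH^s)} = \sup_{\nu}\lambda_\nu^{(s-t)/2}\bigabs{\lambda_\nu^{-1}-S_r(\lambda_\nu)}$. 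Since each $A_j$ is $\HH_j$-elliptic, $\lambda_\nu\ge c_*>0$, and after absorbing a rescaling into constants I may assume $\lambda_\nu\ge 1$. The constraints $t\le s\le 1$ and $s-t\le 2$ put $\theta:=(s-t)/2$ in $[0,1]$, and the claim reduces to the scalar estimate
\begin{equation*}
\sup_{\lambda\ge 1}\lambda^\theta\bigabs{\lambda^{-1}-S_r(\lambda)} \le C_0 \, e^{-(1-\theta)\pi\sqrt{r}}.
\end{equation*}

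Next, I would establish this scalar estimate by interpolation between the two endpoints $\theta=0$ and $\theta=1$. At $\theta=0$, the uniform Braess--Hackbusch bound \eqref{exps1} gives $\sup_{\lambda\ge 1}|\lambda^{-1}-S_r(\lambda)|\le Ce^{-\pi\sqrt{r}}$ directly. At $\theta=1$, I need the uniform bound
\begin{equation*}
\sup_{\lambda\ge 1}\lambda\bigabs{\lambda^{-1}-S_r(\lambda)} = \sup_{\lambda\ge 1}\bigabs{1-\lambda S_r(\lambda)} \le C_1
\end{equation*}
with $C_1$ independent of $r$; equivalently, $\|I-AS_r(A)\|_{\mathcal{L}(\HH^0)}\le C_1$. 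Granted these two endpoint estimates, the elementary pointwise identity
\begin{equation*}
\lambda^\theta\bigabs{\lambda^{-1}-S_r(\lambda)} = \bigl(\lambda\bigabs{\lambda^{-1}-S_r(\lambda)}\bigr)^\theta \bigabs{\lambda^{-1}-S_r(\lambda)}^{1-\theta},
\end{equation*}
valid for $\theta\in[0,1]$, yields $\lambda^\theta|\lambda^{-1}-S_r(\lambda)|\le C_1^\theta C^{1-\theta}e^{-(1-\theta)\pi\sqrt{r}}$. Substituting $1-\theta=(2-s+t)/2$ gives the theorem with $C_0=C_1^\theta C^{1-\theta}$. (One can equivalently phrase the last step as complex interpolation of the operator $T=A^{-1}-S_r(A)$ between $\mathcal{L}(\HH^t,\HH^t)$ and $\mathcal{L}(\HH^t,\HH^{t+2})$.)

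The main obstacle is establishing the uniform endpoint bound at $\theta=1$. The absolute error estimate \eqref{exps1} alone is inadequate in the far field: it only yields $|1-\lambda S_r(\lambda)|\le \lambda Ce^{-\pi\sqrt{r}}$, which is unbounded as $\lambda\to\infty$. What one really needs is to exploit structural properties of the Braess--Hackbusch sinh-quadrature: each summand $\omega_{r,k}\lambda e^{-\alpha_{r,k}\lambda}$ of $\lambda S_r(\lambda)$ is unimodal with maximum $\omega_{r,k}/(e\alpha_{r,k})$, and quantitative control of the nodes $\alpha_{r,k}$ and weights $\omega_{r,k}$ (most delicately at the small-$k$ tail, where $\alpha_{r,k}$ is close to zero) is required to bound $\sup_\lambda \lambda S_r(\lambda)$ uniformly in $r$. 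I would split $[1,\infty)$ at $\lambda^*=e^{\pi\sqrt{r}}$: on $[1,\lambda^*]$ the bound $|1-\lambda S_r(\lambda)|\le \lambda^*\cdot Ce^{-\pi\sqrt{r}}=C$ follows directly from \eqref{exps1}, while on $(\lambda^*,\infty)$ one uses that only the summands with $\alpha_{r,k}\lesssim 1/\lambda$ contribute, together with super-exponential decay of the quadrature data for the other summands, to bound $\lambda S_r(\lambda)$ by a constant. Combining these two regimes closes the argument.
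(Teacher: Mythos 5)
The paper itself states this theorem without proof (it is quoted from \cite{DDGS}), so the comparison is with the strategy of that reference, which likewise works through the eigensystem \eqref{Aeigen}. Your reduction is the right one and is carried out correctly: since $A^{-1}$ and $S_r(A)$ are diagonal in the common eigenbasis, the $\mathcal{L}(\HH^t,\HH^s)$ norm is $\sup_\nu \lambda_\nu^{(s-t)/2}\,\bigabs{\lambda_\nu^{-1}-S_r(\lambda_\nu)}$, and your pointwise identity $\lambda^\theta\abs{\lambda^{-1}-S_r(\lambda)}=\bigl(\lambda\abs{\lambda^{-1}-S_r(\lambda)}\bigr)^\theta\abs{\lambda^{-1}-S_r(\lambda)}^{1-\theta}$ with $\theta=(s-t)/2\in[0,1]$ reproduces exactly the exponent $(2-s+t)\pi/2$ in \eqref{Ainv} once the two endpoint facts are in hand. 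You also correctly observe that \eqref{exps1} alone cannot suffice: one can perturb $S_r$ by $\varepsilon e^{-\alpha x}$ with $\varepsilon\sim e^{-\pi\sqrt r}$ and $\alpha$ arbitrarily small without violating the uniform bound, while destroying the weighted bound; so some quantitative information on the nodes and weights of the particular exponential sums is genuinely needed. This is an honest and accurate diagnosis.

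The gap is that this decisive ingredient, the uniform endpoint bound $\sup_{\lambda\ge1}\lambda S_r(\lambda)\le C$ with $C$ independent of $r$, is only sketched, and the sketch as written would not go through. First, for $\lambda>\lambda^*=e^{\pi\sqrt r}$ it is generally \emph{not} the case that ``only the summands with $\alpha_{r,k}\lesssim 1/\lambda$ contribute'': in the sinc-quadrature construction behind \eqref{exps1} the smallest node is of size roughly $e^{-c\sqrt r}$, which exceeds $1/\lambda$ in this regime, so typically \emph{no} node lies below $1/\lambda$ and all summands sit in the decaying regime. Second, the ``super-exponential decay of the quadrature data'' is not available: the weights in the upper tail are of size $\sim h\sim r^{-1/2}$ and do not decay; what saves the bound is the decay of the factors $e^{-\alpha_{r,k}\lambda}$ combined with the spacing of the nodes. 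Concretely, with $\alpha_{r,k}=\log(1+e^{kh})$, $\omega_{r,k}=h\,e^{kh}/(1+e^{kh})$, one checks $\lambda\sum_{k\ge0}\omega_{r,k}e^{-\alpha_{r,k}\lambda}\le h\lambda\,2^{-\lambda}\sum_{k\ge0}e^{-k h\lambda/2}\le C$ (using $1+e^{kh}\ge 2e^{kh/2}$), while for $k<0$ the nodes are geometrically spaced and the sum is a Riemann-type sum of $\int_0^1\lambda e^{-a\lambda}\,da\le 1$; summing the individual maxima $\omega_{r,k}/(e\alpha_{r,k})$, as your unimodality remark suggests, is too crude and in fact grows like $\sqrt r$. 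So the skeleton (spectral reduction, interpolation between $\theta=0$ and $\theta=1$, splitting at $\lambda^*$) is sound, but the far-field estimate must be proved from the explicit quadrature representation of $S_r$ (or imported from \cite{DDGS}) rather than by the mechanism you describe; also note the mild normalization point that \eqref{exps1} holds on $[1,\infty)$, so the rescaling to $\lambda_\nu\ge1$ should be made explicit, with constants then depending on the coercivity bound.
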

Of course,  this allows one to determine $R(\e)$ such that for any given rank-one tensor $\tau = \tau_1\otimes\cdots\otimes \tau_d$
one can approximate with accuracy $\e$ in a desired smoothness scale $A^{-1}\tau$ by the rank-$R(\e)$ tensor
$$
S_{R(\e)}(A)\tau = \sum_{k=1}^{R(\e)} \omega_{R,k}\Big(\bigotimes_{j=1}^d e^{-\alpha_{R,k}A_j}\tau_j\Big), \quad \text{where}
\,\, R=R(\e)\sim |\ln\e|^2,
$$
explaining the rank-increase by a factor $|\ln\e|^2$.

The {\em representation complexity} bounds \eqref{barr} make use of the implied regularity of the tensor factors. They can 
be extended to bounds for the {\em computational complexity} of actually {\em }computing an accuracy controlled low-rank approximant
by devising a numerical scheme for evaluating  exponential sums of the form $S_{R(\e)}(A)\tau$. Of course, using the eigensystems 
of $A$ is not a numerically viable option. Instead,
 approximating
 exponential factors of the type $e^{-\alpha A_j\tau_j}$ in low dimensions can
be based on the Dunford integral representation
\be
\label{Dun}
e^{-\alpha A_j}\tau_j = \frac{1}{2\pi i} \int_\Gamma e^{-\alpha z} (z I - A_j)^{-1} \tau_j dz,
\ee
where $\Gamma\subset \mathbb{C}$ is a suitably chosen analytic curve separating the spectrum of $A_j$ from the left complex half plane,
and permitting an analytic extension of the integrand to a sufficiently wide region, see \cite{DDGS} for details. 
Sharp estimates for the mapping properties of the low-dimensional operator exponentials $e^{-\alpha A_j}$ and  using
exponential convergence of sinc-quadrature, one can  devise for any given target accuracy $\e$ appropriate
discretizations of the resolvent problems $(zI-A_j)u_j(z) = \tau_j$ for suitably chosen quadrature points $z$. Thus, in contrast to
earlier work in \cite{GHK}, using such integral representations, the order of first discretizing the operator $A_j$ and then
using quadrature, is here reversed in order to exploit mapping properties for error controlled computation. When the component operators
$A_j$ are indeed second order $H^1_0(\Omega_j)$-elliptic and if $\HH^s$ agrees with $H^1_0(\Omega_j)\cap H^s(\Omega_j)$, $s\in [1,2]$,
one can  bound the {\em number of operations and used degrees of freedom} for computing $\bar u(\e)$ by
\be
\label{compl}
d^{1+\bar \rho/\zeta} \e^{-\bar\rho/\zeta} \gamma^{-1}(C\|f\|_{\Acal^\gamma}/\e)|\ln \e|^2\Big(\ln \Big(\frac{d}{\e}\Big)\Big)^2,
\ee
where $\bar\rho$ is a fixed positive number. This is slightly more pessimistic than \eqref{barr} but nevertheless
shows that the curse of dimensionality is clearly avoided for a wide scope of data $f$.

Exponential sum based schemes are very performant but restricted to the particular operator type \eqref{Delta}.
Nevertheless, in particular, sub-exponential rates are seen to provide relevant benchmarks for high-dimensional
diffusion problems with low-rank data. The performance of the more general iterative approach, outlined in Section \ref{sec:solstrat},
will therefore be discussed  in Section \ref{sec:complexity} for  such benchmarks.

\subsection{Multi-Parametric Problems}\label{ssec:many}

As mentioned earlier in Section \ref{sec:solstrat}, the uniform ellipticity assumption on the parametric 
family $a(y), y\in \pdom$, ensures ellipticity of \eqref{parametric} in the tensor product Hilbert space
\be
\label{paraH}
V :=  H^1_0(\Omega)
 \otimes L_2(\pdom,\mu) =  H^1_0(\Omega)
 \otimes\Big(\bigotimes_{j=1}^d L_2\bigl((-1,1), \textstyle\frac {d y_j}2\displaystyle)\Big) = L_2(\pdom,H^1_0(\Omega),\mu).
\ee
The mapping
$u: y\mapsto u(y)\in \cH$
is then well-defined and the set $u(\pdom) \subset \cH$ of all states that can be attained when traversing the parameter domain, sometimes referred to as {\em solution manifold}, is compact. Hence the {\em Kolmogorov $n$-widths}
$$
d_n(u(\pdom))_V:= \inf_{{\rm dim}(W)=n}\sup_{y\in \pdom}\inf_{w\in W}\|u(y)-w\|_V,
$$
tend to zero. They quantify linear approximability of $u(\pdom)$ in the worst case sense with respect to $\pdom$ viewing $u$ as a mapping into the smaller Bochner space $L_\infty(\pdom,H^1_0(\Omega))$. A natural accuracy measure for low-rank approximation in $V$ is 
the mean-squared error
\be
\label{deltan}
\delta_n(u,\mu)^2_V := \inf_{{\rm dim}(W)=n} \int_\pdom \min_{w\in W}\|u(y)-w\|_{H^1(\Omega)}^2d\mu(y).
\ee

As exploited earlier, best subspaces realizing $\delta_n(u,\mu)_V$ are obtained with the aid of the Hilbert-Schmidt operator
$M_u: L_2(\pdom,\mu)\to H^1_0(\Omega)$ defined by
$$
M_u \varphi := \int_\pdom u(y)\varphi(y)d\mu(y),
$$
through its Hilbert-Schmidt decomposition 
\be
\label{HSy}
M_u = \sum_{k=1}^\infty \sigma_k v_k \langle \cdot,\phi_k\rangle_{L_2(\pdom,\mu)},
\ee
where $\sigma= (\sigma_k)_{k\ge1}\in \ell_2(\N)$ is non-negative and non-increasing. Moreover,  the $(v_k)_{k\ge 1}, (\phi_k)_{k\ge 1}$
are orthonormal systems in $H^1_0(\Omega)$ and $L_2(\pdom,\mu)$, respectively. In particular, the spaces $W_n:= {\rm span}\,\{v_k:k=1,\ldots,n\}$
realize $\delta_n(u,\mu)_V$ and 
\be
\label{errn}
\delta_n(u,\mu)^2_V= \sum_{k>n}\sigma_k^2.
\eeqn
The largest $r$ for which $\sigma_r>0$ is the rank of $M_u$.

Since $\mu$ is a probability measure $\delta_n(u,\mu)_V$ is always dominated by the $n$-widths
\be
\label{Knw}
\delta_n(u,\mu)_V\le d_n(u(\pdom))_V, %
\ee
so that bounds for $d_n$ provide such for $\delta_n$ as well. In this context a crucial fact is {\em holomorphy} of the map $u$, see 
\cite{cds1,cds2,CDacta:15}, even when $d=\infty$, which is, in particular, responsible for decay rates of $\delta_n, d_n$ that are robust in $d$.
 
Quantifying such rates, however, requires  distinguishing 
two fundamental regimes, namely the case (a) $d<\infty$ possibly very large but fixed, and (b) $d=\infty$. In (a) 
all parametric components $y_i$ will be viewed as equally important. In scenario (b) uniform ellipticity necessitates 
a  decay of the parametric expansion functions $\psi_j$ in $L_\infty(\Omega)$.

\subsubsection{Finitely many parameters}
 It is straightforward to show (for instance, using Taylor expansion as in \cite{BC:15}) that for fixed $d<\infty$, the $n$-widths of the solution manifold $u(\pdom)$ satisfy
\be
\label{adecay}
d_n(u(\pdom))_\cH \le C e^{-cn^{1/d}},
\ee
where the constants $c,C$ depends on  $\bar a$ and $r$ in 
 the uniform ellipticity condition $r\le \bar a -\sum_{i=1}^d\abs{\psi_i} \le a(y)= \bar a + \sum_{i=1}^d y_i\psi_i$.

The rate \eqref{adecay} can be slightly improved to $e^{-\abs{\ln \rho}n^{\frac{1}{d-1}}}$ under the assumption
\be
\label{assA}
\sum_{i=1}^d A_i= \theta \bar A \quad \text{for a $\theta \in (0,1)$,}
\ee
for $A_i,\bar A$ as above, which holds when $\sum_{i=1}^d\psi_i =\theta \bar a$.
More precisely, it is shown in \cite{BC:15} that the partial sums $u_k(y)$ in a Neumann series expansion of $u(y)$ can be represented as
\be
\label{ps}
u_k(y) = \sum_{j=0}^{n(d-1,k)}\phi_{k,j}(y)\, v_j,
\ee
for some  $d$-variate polynomials $\phi_{k,j}$ and $v_l\in V$, and therefore have at most rank $n(d-1,k)\le (k+1)^{d-1}$.

Estimates of the type \eqref{adecay} give a first justification for the use of  sub-exponential decay rates  as benchmark rates. %
On the other hand, the obtained rates weaken substantially with increasing $d$.  
This has led to a careful study of more specialized types of parametric expansions to better understand whether and 
to what extent  low-rank approximation may substantially outperform 
approximations based on an \emph{a priori} fixed expansion system such as tensor product Taylor or Legendre expansions.

An important model case are 
 parametric expansions with {\em piecewise constant} coefficients of the form
 \begin{equation} \label{inclusions}
    a(y) = \bar{a} + \sum_{j=1}^d y_j \theta_j  ,
 \end{equation}
where $d<\infty$, $\bar a=1$,  $\theta_j= b_j \Chi_{D_j}$, $b_j\in (0,1)$, and the subdomains    $D_j\subset \Omega$ 
form a partition of $\Omega$.
 
 Specifically, consider first the spatially one-dimensional case $\Omega = (0,1)$.
  As shown in \cite{BC:15,BCD15}, for any $f\in \cH'$, $u(y)$ has then finite rank at most $2d-1$.
 In fact, $u(y)|_{D_i}$ must be spanned for each $y$  by $\Chi_I,x\Chi_I,F_i\Chi_I$ where $F_i''=f$ on $D_i$
 and $F_i$ vanishes at the end points of $D_i$. Boundary conditions and $d-1$ continuity conditions at the interior breakpoints
 leave $2d-1$ degrees of freedom so that $u(y)$ is contained in a linear space of dimension at most $2d-1$. Since therefore 
 $d_{2d-1}(u(\pdom))_V=0$, $u(y)$ has at most rank $2d-1$.

For higher spatial dimensions, $M_u$ is finite only in exceptional cases. 
For piecewise constant diffusion coefficients of the form \eqref{inclusions} with $\theta_j= \bar a \theta \chi_{D_j}$, significantly refined information is derived in \cite{BC:15}
by decomposing $H^1_0(\Omega)$ into $\bigoplus_{i=1}^d H^1_0(D_i)$ and its orthogonal complement $W$ with respect to
$\langle\cdot,\cdot\rangle_{\bar A}$. Employing corresponding $\bar A$-orthogonal projections, one can write 
\[
u(y)= u_W(y) + \sum_{i=1}^d u_i(y),
\] 
where  the $u_i(y)$ are solutions to
$$
(1+\theta y_i) \int_{D_i} \bar a \nabla u_i(y)\cdot\nabla v\, dx = \int_{D_i}f\,v\, dx, \,\, v\in H^1_0(D_i),\quad u(y)|_{\Omega\setminus D_i}=0.
$$
Since the $u_i$ have rank one depending only on $y_i$ the portion $\sum_{i=1}^d u_i(y)$ has at most rank $d$. 
To determine the rank of $u_W$, it is written as the harmonic extension of a skeleton component $u_\Gamma$.
It is then shown that $u_\Gamma(y)$ has a Neumann series representation which ultimately requires estimating the ranks
of its partial sums $u_{\Gamma,k}(y)$. 

In the case $d=4$ and $D_i$ being congruent squares, this analysis is carried out
showing that in this case the ranks of the partial sums $u_k(y)$ of the Taylor expansion of $u(y)$ are bounded by $8k+5$, which then 
leads to a bound $d_n(u(\pdom))_V \le C e^{-cn}$.  Hence,
 for each $n\in\N$ one can find functions $u^\rs_k\in H^1_0(\Omega)$, $u^\rp_k \in L_2(Y,\mu)$ such that
 \begin{equation}\label{pwconstexponential}
   \Bignorm{ u - \sum_{k=1}^n u^\rs_k \otimes u^\rp_k}_{L_2(\pdom,H^1_0(\Omega))} \lesssim e^{- c n}.
 \end{equation}
 Remarkably, numerical experiments reveal that for partitions into four non-congruent quadrilaterals, corresponding singular values
 decay at a significantly slower, yet still subexponential rate.
 
Moreover, numerical experiments support a similar behavior for larger $d$, with $c$ in the estimate \eqref{pwconstexponential} depending weakly (approximately as $1/d$) on $d$.
 Very similar results are obtained for hierarchical tensor approximations, where also the intermediate ranks for 
 matricizations with respect to the parametric variables enter. While to our knowledge, there are no theoretical bounds on
 these ranks, the numerical results indicate that the ranks for the spatial matricizations 
 dominate and behave essentially as in the previous case where the parametric variables are not separated.
 
\subsubsection{Infinitely many parameters}\label{sssec:infparam}
The case $d=\infty$, where we consider diffusion coefficients $a(y) = \bar a + \sum_{i=1}^\infty y_i \psi_i$ parameterized by $y \in Y = (-1,1)^\N$, turns out to be significantly different. A tensor product basis for $L_2(Y,\mu)$ is provided by the product Legendre polynomials $L_\nu(y) = \prod_{i=1}^\infty L_{\nu_j} (y_j)$ for $\nu \in \Fcal := \{ \nu \in \N_0^\N \colon \nu_j = 0 \text{ for almost all $j\in \N$}\}$, giving rise to the orthonormal polynomial basis expansion 
\begin{equation}\label{eq:legendreexp}
u(y)= \sum_{\nu\in \cF} u_\nu L_\nu(y), \quad u_\nu = \int_Y u(y)\,L_\nu(y)\,d\mu(y).
\end{equation}
Also here, a natural first question concerns the separation of spatial and parametric variables as in \eqref{pwconstexponential}, that is, the approximation of $u$ by functions of the form $\sum_{k=1}^n u^\rs_k \otimes u^\rp_k$. The smallest possible error $\delta_n(u,\mu)_V = (\sum_{k>n} \sigma_k^2 )^{1/2}$ is attained by the truncated Hilbert-Schmidt decomposition of $M_u$ and thus determined by the decay of the singular values $(\sigma_k)_{k\in\N}$ of $M_u$.

Truncations of the expansion \eqref{eq:legendreexp} provide alternative low-rank approximations, where one takes $u^\rs_k := u_{\nu_k}$, $u^\rp_k := L_{\nu_k}$ with $(\nu_k)_{k\in\N}$ chosen such that $\norm{u_{\nu_k}}_{H^1(\Omega)} \geq \norm{u_{\nu_{k+1}}}_{H^1(\Omega)}$, which yields an approximation error $(\sum_{k>n} \norm{u_{\nu_k}}_{H^1(\Omega)}^2 )^{1/2}$ with $n$ terms. By the best approximation property of the Hilbert-Schmidt decomposition, clearly $\sum_{k>n}\norm{u_{\nu_k}}^2_{H^1(\Omega)} \geq \sum_{k> n}\sigma_k^2$. 

The decay of $\norm{u_{\nu_k}}_{H^1(\Omega)}$ as $k\to \infty$, which depends mainly on the functions $\psi_j$ and their decay as $j\to\infty$, is well studied, see e.g.\ \cite{cds1,cds2,CDacta:15,BCM:2015}. One obtains results of the type $( \norm{u_\nu}_{H^1(\Omega)} )_{\nu \in \cF} \in \ell_p(\cF)$ for some $p \in (0,2)$, which implies $(\sum_{k>n} \norm{u_{\nu_k}}_{H^1(\Omega)}^2 )^{1/2} \lesssim n^{-1/p+1/2}$. There are only very limited results on the decay of $\sigma_k$ in this setting; it depends also, e.g., on the right hand side $f$. 

The following result on $(\sigma_k)_{k\in\N}$ was obtained in \cite[\S6]{BCD15}, adapting arguments from \cite[\S4.1]{BCM:2015}:
For $\Omega=(0,1)$ and a certain class of $\psi_j$ such that $u(y)$ is easy to handle analytically, there always exists $f \in H^{-1}(\Omega)$ 
such that $( \norm{u_\nu}_{H^1(\Omega)} )_{\nu \in \cF} \in \ell_p(\cF)$ and $(\sigma_k)_{k\in\N}\notin \ell_{p'}(\N)$ for any $p'< p$. In other words, with appropriate $f$, the singular values $\sigma_k$ have the \emph{same asymptotic decay} as the decreasingly ordered Legendre coefficient norms $\norm{u_{\nu_k}}_{H^1(\Omega)}$. This also yields lower bounds on the Kolmogorov widths $d_n(u(\pdom))_{H^1(\Omega)} \geq  \delta_n(u,\mu)_{H^1(\Omega)} \gtrsim n^{-1/p+1/2}$ in these settings. Thus, the performance concerning the number of terms $n$ attainable by any low-rank approximation is, for an adverse choice 
of data $f$,  asymptotically no better than that of the simpler Legendre expansion \eqref{eq:legendreexp}.

As numerical experiments in \cite{BCD15} show, this situation is also observed in generic examples with typical choices of $\psi_j$ and $f$, where this has not been proved analytically. In summary, in such problems with infinitely many parameters, one can therefore only expect limited gains from optimized low-rank approximations in comparison to Legendre expansions \eqref{eq:legendreexp}. Indeed, as explained in more detail in Section \ref{sec:complexity}, corresponding SVD-based computational methods are generally more expensive, since they scale nonlinearly with respect to the expansion ranks.

\section{Adaptive Error Control and Computational Complexity}\label{sec:complexity}

 A central objective in adaptive methods for high-dimensional problems is the reliable quantification of errors. 
For scenario (I), one approach to determine rigorous error bounds has been recently proposed in \cite{Dol:19} employing the {\em Hypercircle Principle} 
in combination with flux-approximations in tensor-train format. However, lower bounds do not seem to be available in this format.
Hence, while the involved constant is just one, a quantification of a possible overestimation and its dependence on $d$ seems to remain
unclear.  Moreover, the approach does not offer any provision for updating the 
given discretization based on the computed bounds.
When estimating instead residuals directly,  as suggested in this article,
one issue is the quality of error estimates in that one needs to ensure that all involved constants are controlled and the resulting bounds remain meaningful for large dimension $d$. For instance, in naive generalizations of lower-dimensional error estimators, constants in the estimates may depend exponentially on $d$.
 A second issue is the numerical cost of their evaluation, which should equally remain under control for large $d$.
 
The approach that we follow here addresses both issues in conjunction with strategies for updating the approximations. It is based on the proportionality between errors and residuals \eqref{res} and approximation of these infinite-dimensional residuals. To ensure the quality of error estimates for large $d$, we rely in particular on expressing the underlying problem in terms of suitable basis functions. Achieving near-optimal computational cost of the residual approximation prohibits, of course,  ever accessing the underlying full coefficient tensor but requires a somewhat specialized treatment in each of the problem scenarios that we consider.

The resulting methods for error estimation can, in principle, be used in various different constructions of adaptive solvers. In Section \ref{ssec:complexity}, we combine them with a scheme for which we can subsequently show near-optimal asymptotic computational costs.
 
\subsection{Residual Approximations}\label{sec:residualapprox}
The accuracy-controlled approximation of residuals $\bA \bv -\bbf$ for given finitely represented $\bv$, needed in any realization of \eqref{itertemplate}, requires some means of approximating $\bbf$ in the desired form, and more crucially a scheme for approximately applying the infinite matrix $\bA$.
Devising such a routine with provably optimal performance, one faces different obstructions in the two problem scenarios (I), (II). 

\subsubsection*{{\rm(I)} Diffusion Equations} Recall that an important requirement on a Riesz basis $\Phi$ for the energy space
$V$ is that it is a rescaled version of an orthonormal basis $\Psi$ for $L_2(\Omega)$; the latter property is crucial for avoiding constants in the estimates that deteriorate exponentially in $d$. In this case the (unbounded)  representation 
$\bT := S_\Psi' AS_\Psi$ preserves the (formal) low-rank structure of the diffusion operator while the scaled version 
$  \bA = S_{\tilde\Phi}' A S_{\tilde\Phi} = \tilde \bS^{-1}_\infty \bT \tilde \bS^{-1}_\infty$ from \eqref{A1} has infinite
rank. A key component of a computable residual approximation in low-rank format thus is to replace the ideal scaling matrix $ \tilde \bS^{-1}_\infty$ by an approximation
$ \tilde \bS^{-1}_m$. Here, $m = m(\bv,\eta)$ depends through \eqref{m} on the support of the vector $\bv$,  which $\bA$ is  to be applied to,  
as well as on the target accuracy $\eta$. This ensures that the  matrix $\bA_{m(\bv,\eta)}:=  \tilde \bS^{-1}_m \bT \tilde \bS^{-1}_m$
has now finite rank while, on account of Theorem \ref{thm:expsumrel}, the restriction to the activated basis elements is still well-conditioned and thus convergence of \eqref{itertemplate} is still ensured.

The matrix $\bA_{m(\bv,\eta)}$ is still infinite and its approximate
application   makes essential use of the {\em near-sparsity} or 
{\em compressibility} of  properly rescaled versions of the low-dimensional blocks $\bT^{(i)}_{n_i}$ of $\bT$. 
Exploiting compressibility of such scaled blocks, in turn, is facilitated by the  near-separability of the low-rank   
scaling matrices. 
 
 To indicate how this can be used to approximate the infinite-dimensional residual $\bA \bu^n-\bbf$, whose $\ell_2$-norm is uniformly proportional to the error $\|\bu-\bu^n\|_{\ell_2(\cI)}$, within a suitably updated 
 dynamic accuracy tolerance $\eta_n$, we recall the notion of {\em $s^*$-compressibility} which plays a 
 pivotal role in adaptive wavelet methods, \cite{Cohen:01,Cohen:02}. 
 An infinite  matrix $\mathbf{B}\in \R^{\vee\times\vee}$
  is called
\emph{$s^*$-compressible} if for any $0 < s < s^*$, there exist 
summable positive sequences $(\alpha_j)_{j\geq 0}$, $(\beta_j)_{j\geq 0}$ 
and for each $j\geq 0$, there exists $\mathbf{B}_j$ with at most $\alpha_j 2^j$
nonzero entries per row and column, such that $\norm{\mathbf{B} - \mathbf{B}_j}
\leq \beta_j 2^{-s j}$. 

To approximate $\mathbf{B}\bv$ for a given $\bv\in \ell_2(\vee)$ one combines {\em a priori} knowledge about the matrix $\mathbf{B}$
with {\em a posteriori} information on the input $\bv$, e.g., by partitioning the support of $\bv$ into disjoint subsets $\Lambda_j$
corresponding to the magnitude of its entries in decreasing order. We then define a $\bv$-dependent approximation $\tilde\bB_J$ of $\bB$,
\begin{equation}
\label{motivate}
  \bB \bv \approx \tilde\bB_J \bv := \sum_{j=0}^{J} \bB_{J-j}(\bv|_{\Lambda_j})	,
\end{equation}
which satisfies
\begin{align*}
\norm{ \bB\bv - \tilde\bB_J \bv }  &=  
\Bignorm{ \sum_{j=0}^{J}(\bB- \bB_{J-j})(\bv|_{\Lambda_j}) + \bB (\bv|_{\Lambda_{J+1}}) } \\
  & \leq \sum_{j=0}^{J} \norm{\bB- \bB_{J-j}} \norm{\bv|_{\Lambda_j}} + \norm{\bB} \norm{ \bv|_{\Lambda_{J+1}} } .  
\end{align*}
Note that for $j$ close to $J$, $\bB_{J-j}$ is a coarse approximation, whereas the 
most accurate approximations of $\bB$ are applied to the large contributions in $\bv$. 

For high-dimensional input tensors $\bv \in \ell_2(\vee^d)$, the crucial step is then to apply approximations of each lower-dimensional component in the tensor decomposition of $\mathbf{T}$ separately in each tensor mode, with accuracies controlled by subdividing the sequences $\pi^{(i)}(\bv)$ for each mode $i=1,\ldots,d$.
As shown in \cite{BD2}, one can then combine this with the approximate rescaling in a routine $\apply(\bv;\eta): \bv \to \bw_\eta$
with
\beqn
\label{weta-def}
\bw_\eta := \tbS^{-1}_{m(\eta;\bv)}\tbT_{J(\eta)}\tbS^{-1}_{m(\eta;\bv)}\bv,
\eeqn
for exponential sum truncation ranks $m(\eta;\bv)$ chosen according to \eqref{m} and compression depth $J(\eta)$,
such that $\norm{\bA\bv -\bw_\eta} \leq \eta$. 

Instead of the energy norm, one can instead measure accuracy in a weaker norm
such as the $L_2(\Omega)$-norm. Since this is a cross-norm one would intuitively expect achieving desired accuracy 
thresholds at the expense of lower ranks. This is indeed the case as shown in \cite{BD3}, where rigorous error control in 
$L_2(\Omega)$ is established again in the framework of an equivalent problem formulation in sequence space.
The relevant mapping properties are now realized through an {\em unsymmetric} preconditioning, which requires only a single scaling operation in each application of $\bA$ of the form
\be
\label{A2n}
\bA_{m(\bv,\eta)} = \bS_{\Phi_{m(\bv,\eta)}}^{-2} \bT  ,
\ee
see \cite{BD3} for further details.
 
\subsubsection*{{\rm(II)} Parametric PDEs} As indicated at the beginning of Section \ref{ssec:core}, one can tensorize 
any Riesz-basis $\Phi$ of $H^1_0(\Omega)$ with an orthonormal basis of $L_2(\pdom,\mu)$. A natural choice for this orthonormal basis are the product Legendre polynomials as in Section \ref{sssec:infparam}, with an orthonormal basis given by $\{ L_\nu\}_{\nu \in \mathcal{G}}$, where $\mathcal{G} = \N_0^d$ for $d<\infty$ and by $\mathcal{G} = \cF$ for $d=\infty$. The resulting
system $\Phi:=\{\varphi_\lambda \otimes L_\nu\}_{(\lambda,\nu)\in \sidx \times \mathcal{G}}$ is then a Riesz basis of $L_2(\pdom,H^1_0(\Omega))$,
which we now use to transform \eqref{parametric} into an  equivalent system over $\ell_2(\sidx\times \mathcal{G})$.
Defining then with $\psi_0 := \bar a$ for $j\in \N_0$
\be
\bA_j := \bigl( \langle \psi_j \nabla  \varphi_{\lambda'}, \nabla \varphi_{\lambda}\rangle \bigr)_{\lambda,\lambda' \in \sidx},\quad 
\bbf := \big(\langle f, \varphi_\lambda\otimes L_\nu\rangle\big)_{(\lambda,\nu)\in \sidx\times\mathcal{G}},
\ee
as well as 
\be
 \bM_j := \left( \int_{\pdom} y_j  L_\nu(y) L_{\nu'}(y) \,d\mu(y)  \right)_{\nu,\nu'\in\mathcal{G}},\quad j\geq 1,
\ee
and $\bM_0$ as the identity on $\ell_2(\mathcal{G})$,
we obtain an equivalent problem
\beqn
\label{equiv}
\bA \bu = \bbf,\quad \text{where}\quad  \bA := \sum_{j=0}^d \bA_j \otimes \bM_j  ,
\eeqn
on $\ell_2(\sidx \times \mathcal{G})$. Due to the classical three-term recurrence relations for Legendre polynomials, the matrices $\bM_j$ are
{\em bi-diagonal} for $j\geq 1$.

In the case of fixed finite dimension $d<\infty$, the hierarchical tensor format is a natural choice. In fact, it poses significantly less
difficulties than in the case of high-dimensional diffusion equations, since the mapping $S_\Phi$ is a Kronecker product and the sparse matrices $\bM_j$
need no additional compression. 
The matrices $\bA_j$ can be approximated similarly to \eqref{motivate}, with accuracies controlled by norms of segments of $\norm{\pi^{(\rs)}(\bv)}$ of the spatial contractions
\[
\pi^{(\rs)}_\lambda(\bv) :=   \Big(\sum_{\nu\in \mathcal{G}}|\bv_{\lambda,\nu}|^2\Big)^{1/2}, \quad \lambda \in \sidx,
\]
for inputs $\bv \in \ell_2(\sidx\times \mathcal{G})$, which can be computed from the ${H}$SVD as in \eqref{eq:contreval}.
As shown in the next section, corresponding complexity estimates are slightly more favorable than for scenario (I).

In problems with $d=\infty$, residual approximations are more complex due to the additional requirement of \emph{adaptive truncation of dimensionality:}
to ensure efficiency with controlled errors, the number of parameters $y_j$ that are activated needs to be adjusted to discretization and low-rank approximation errors. This can be controlled adaptively, taking into account the known decay properties of the parameter expansion functions $\|\psi_j\|_{L_\infty(\Omega)}$, via estimates for the operator truncation error of the type
\be
\label{eJ}
 e_{J} := \Big\| \sum_{j > J } \bA_j \otimes \bM_j \Big\|\le C J^{ -S},
 \ee
 with some fixed $S>0$. 
 Such an adaptation has been considered in \cite{Gittelson:13,Gittelson:14} for generic $\psi_j$, but as observed in \cite{BCD15}, the value of $S$ is strongly influenced by structural features of this expansion. A particularly favorable case are expansions with a multilevel structure, where each $j$ is uniquely associated to a pair $(\ell(j), k(j))$ with inverse mapping $j(\ell,k)$, such that there are $\mathcal{O}(2^{m\ell})$ functions on level $\ell$ and
\be
\label{ml}
  \sum_{k} \abs{\psi_{j(\ell, k)}} \leq C 2^{-\alpha  \ell}, \quad \ell \geq 0,
\ee
for some $\alpha>0$, with a uniform constant $C>0$.
Here $\ell(j) \in \N_0$ describe the scale and $k(j) \in \Z^m$ (with $\Omega \subset \R^m$) the spatial localization of $\psi_j$.
Such a condition is satisfied in particular when ${\rm diam}({\rm supp}(\psi_j))\sim 2^{-\ell(j)}$, there exists $M>0$ such that for each $\ell, k$, the number $\#\{ k' \colon \supp \psi_{j(\ell,k)}\cap \supp \psi_{j(\ell,k')}\neq \emptyset \} \leq M$, and $\norm{\psi_j}_{L_\infty(\Omega)} \sim c_j \sim 2^{-\alpha \ell(j)}$.
Expansions with the property \eqref{ml} can be constructed for wide classes of random fields, see \cite{BCM:2016}, where one obtains $S = \alpha/m$.
 
 In principle, for any fixed set of modes $x, y_1,\ldots,y_J$, one can apply the hierarchical format with $J+1$ tensor modes, which leads to some substantial additional difficulties in the complexity analysis due to the changing dimensionality. We therefore restrict the following discussion to the basic case of  separating spatial indices in $\sidx$ and parametric indices in $\cF$ as in Section \ref{sssec:infparam}, that is, to estimating errors in low-rank approximations to $\bu$ of the form 
   \be
\label{uelr}
\bar\bu =   \sum_{k=1}^{r}\bar \bU^\rs_k \otimes \bar\bU^\rp_k.
\ee
with $\bU^\rs_k$ and $\bar\bU^\rp_k$ finitely supported on $\sidx$ and $\cF$, respectively.
In view of the results outlined in Section \ref{sssec:infparam}, direct Legendre expansion can be advantageous compared with \eqref{uelr}
regarding asymptotic costs. Nevertheless, despite the fact that the number of terms $r$ is possibly not significantly smaller
than for sparse Legendre expansions, and the total computational cost may in fact be higher, an efficient computation of low-rank approximations with controlled error in $L_2(\pdom,H^1_0(\Omega))$ and near-optimal basis functions as in \eqref{uelr} can still be of interest in many application scenarios, for instance in the case of many parameter queries.

In the case of a low-rank approximation of the residual with simultaneous sparse approximation in the tensor modes, the decomposition of $\bv$ is more
involved. For $p,q = 0,1,2,\ldots$ an input $\bv$  is then decomposed further into segments $\bv_{[p,q]}$ of rank $2^p$, where $p$ refers to consecutive groups of $2^p$ singular values of $\bv$, and $q$ refers to the best $2^q$-term approximations 
of the parametric contractions $\pi^{(\rp)}(\bv)$, defined by  
\[
  \pi^{(\rp)}_\nu(\bv) :=   \Big(\sum_{\lambda\in \sidx}|\bv_{\lambda,\nu}|^2\Big)^{1/2}.
\] 
The routine $\apply(\bA,\bv;\eta) \to \bw_\eta$ then yields finitely supported low-rank approximations 
\begin{equation}\label{eq:bweta}
 {\bw_{\eta} := \sum_{p,q\ge 0}\sum_{j=1}^{J_{p,q}({\eta})}({\tilde\bA_{j,p,q}} \otimes \bM_j)\bv_{[p,q]}} \,,
\end{equation}
with ${ \|\bw_{ \eta} -\bA\bv\|\le { \eta}}$,
where the number of terms $J_{p,q}(\eta)$ is determined in an a posteriori fashion from the 
norms $\|\bv_{[p,q]}\|$ as well as known bounds for the truncation errors $e_{J_{p,q}}$ from \eqref{eJ}. Moreover, 
$(\tilde\bA_{j,p,q} \otimes \bM_j)\bv_{[p,q]}$ is an error-controlled approximation of $(\bA_j \otimes \bM_j) \bv_{[p,q]}$ obtained as in \eqref{motivate}.
Here, the change in compressibility of $\bA_j$ as $j\to \infty$, which typically  is driven by increasingly oscillatory features of the functions $\psi_j$, crucially determines the resulting computational costs. Also in this respect, a multilevel structure of the parametric expansion, as in \eqref{ml},  turns out to be highly advantageous, in that it also leads to improved compressibility of the matrices $\bA_j$. In summary, under this condition one can construct the approximation \eqref{eq:bweta} at near-optimal computational costs.
For further details, we refer to \cite{BCD15}.

\begin{remark}
	A different technique for residual approximation for such problems, using finite element discretizations in space but also relying on contractions $\pi^{(i)}(\bv)$ similarly to the present approach, was proposed in \cite{EPS17} and shown to provide an upper bound for the error. However, for this method no suitable complexity estimates are available.
\end{remark}

\subsection{Computational Complexity of Solvers}\label{ssec:complexity}

Independently of any specific solution algorithm, the residual approximations, discussed above, provide information on numerical errors in high-dimensional problems, where the costs of evaluating these approximations in terms of their quality depend only mildly on the dimensionality of the problem.
Such residual approximations can, in particular,  be  naturally used  in iterative solvers as discussed in Section \ref{sec:iter}, which gradually refine simultaneous sparse and low-rank approximations.

As indicated earlier, the residual approximations, as well as many further operations required in iterative solvers, increase the representation complexity of iterates, especially concerning their tensor representation ranks. The computational complexity of the resulting methods is therefore a crucial issue. Ideally, it should be directly related to the convergence of corresponding best approximations; in the case of low-rank approximations, however, one can generally not expect computational costs to remain strictly proportional to the number of degrees of freedom, since the costs of many required procedures such as SVD scale nonlinearly in the ranks.

A basic strategy for controlling approximation complexity, following the template \eqref{itertemplate}, is to combine error reduction (based, in particular, on residual approximations) with a suitable complexity reduction. Whereas maintaining convergence to the exact solution is straightforward with the ingredients we have introduced so far, relating the computational complexity to the quality of best approximations is substantially more delicate. 
We now discuss two basic approaches that rely on hard and soft thresholding with judiciously chosen thresholding parameters. 

\subsubsection{Hard thresholding}
This first approach is based on the observation that if an input $v$ is known to be quantifiably close to an unknown object $u$ then, using properly chosen thresholds, these routines produce a new approximation with a slightly larger error but with near-minimal ranks or representation complexities, respectively. 

We will first frame the basic mechanisms underlying the reduction procedures in abstract terms. 
To this end, let ${H}$ be a Hilbert space and let $\{ \mathcal{S}_r \}_{r\in \N}$ be a family of subsets from which approximations are selected, where the parameter $r$ controls their complexity.
For $v \in {H}$ and $\eta >0$, let 
\[ 
 \bar{r}(v,\eta) := \min\{ r \colon (\exists \,w \in \mathcal{S}_r\colon \norm{v - w} \leq \eta) \}, 
 \]
 which is the minimal complexity parameter for which the best approximation from these sets attains accuracy $\eta$.
  In addition, assume that there exists a linear projection $\bar{P}(v,\eta)$ onto $\mathcal{S}_{\bar{r}(v,\eta)}$ such that $\norm{v - \bar{P}(v,\eta) v} \leq \eta$.

\begin{lemma}
\label{lem:coars}
For any $r$, let $\hat{P}_r \colon \cH \to \mathcal{S}_r $ and $\hat{e}_r\colon \cH\to \R^+$ be such that 
\[ 
 \norm{v - \hat{P}_r(v)} \leq \hat{e}_r(v),
 \] where for some $\kappa>0$, 
 \[\hat{e}_r(v) \leq \kappa \inf_{w \in \mathcal{S}_r} \norm{v - w}.
 \]  
 For any $\alpha, \eta >0$, and for any $u, v \in \cH$ with $\norm{u - v} \leq \eta$, let $\hat r$ be minimal such that $e_{\hat r} (\bv) \leq \kappa(1+\alpha)\eta$.
Then, for $\hat v := \hat{P}_{\hat r}(v) \in \mathcal{S}_{\hat r}$  one has  
\be
\label{rclaim}
\norm{u-\hat v}\leq ( 1 + \kappa(1+\alpha))\eta\quad  \text{while} \quad \hat r \leq \bar{r}(u, \alpha\eta).
\ee
\end{lemma}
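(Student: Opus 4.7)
The plan is to verify the two claims of \eqref{rclaim} separately, and the whole argument should be quite short as it amounts essentially to a triangle inequality combined with a minimality argument exploiting the quasi-optimality hypothesis on $\hat e_r$.

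For the error estimate, I would simply use the triangle inequality together with the assumption $\norm{u-v}\le \eta$ and the quality bound on the projection. Concretely,
\begin{equation*}
\norm{u - \hat v} \le \norm{u - v} + \norm{v - \hat{P}_{\hat r}(v)} \le \eta + \hat e_{\hat r}(v) \le \eta + \kappa(1+\alpha)\eta,
\end{equation*}
where the last bound is just the defining inequality of $\hat r$. This yields the first claim.

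For the complexity estimate, set $r^* := \bar r(u,\alpha\eta)$. By the definition of $\bar r$ and the existence of the linear projection $\bar P(u,\alpha\eta)$, there exists $w^* \in \mathcal{S}_{r^*}$ with $\norm{u-w^*}\le \alpha\eta$. I would then estimate the best approximation of $v$ from $\mathcal{S}_{r^*}$ via
\begin{equation*}
\inf_{w\in \mathcal{S}_{r^*}} \norm{v-w} \le \norm{v - w^*} \le \norm{v-u} + \norm{u-w^*} \le \eta + \alpha\eta = (1+\alpha)\eta,
\end{equation*}
so that by the quasi-optimality hypothesis $\hat e_{r^*}(v)\le \kappa(1+\alpha)\eta$. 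Since $\hat r$ was defined as the smallest index for which $\hat e_{\hat r}(v)\le \kappa(1+\alpha)\eta$ is satisfied, this immediately yields $\hat r \le r^* = \bar r(u,\alpha\eta)$.

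The argument is essentially mechanical once one has unpacked the definitions; the only mildly subtle point is that the minimality of $\hat r$ implicitly presupposes a suitable monotonicity of $r\mapsto \hat e_r(v)$ (naturally arising when the sets $\mathcal{S}_r$ are nested, as is the case for the rank classes $\Hcal(\rr r)$ and the index-set-based classes used in $\coarsen$), so that the set of admissible indices $r$ is an upper set. I would briefly remark on this and note that both instantiations relevant here — hard thresholding of $\cH$SVD singular values and product-set coarsening driven by the contractions $\pi^{(i)}$ — fall precisely into this abstract framework, with $\kappa=\sqrt{2d-3}$ and $\kappa=\sqrt d$ respectively by Theorem \ref{thm:hier_tensor_recompression_est} and Proposition \ref{prop:coarsen}.
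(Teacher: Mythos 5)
Your proof is correct, and its overall structure is the one the paper uses: bound the best approximation error of $v$ from $\mathcal{S}_{\bar{r}(u,\alpha\eta)}$ by $(1+\alpha)\eta$, invoke the quasi-optimality of $\hat P_r$ together with the minimality of $\hat r$ to conclude $\hat r \le \bar{r}(u,\alpha\eta)$, and get the accuracy bound by the triangle inequality. The one genuine difference lies in how the bound $(1+\alpha)\eta$ is obtained: the paper applies the linear projection $\bar{P}(u,\alpha\eta)$ to $v$ itself and splits $v-\bar{P}(u,\alpha\eta)v = \bigl((v-u)-\bar{P}(u,\alpha\eta)(v-u)\bigr)+\bigl(u-\bar{P}(u,\alpha\eta)u\bigr)$, which uses linearity and, implicitly, that $\norm{(v-u)-\bar{P}(u,\alpha\eta)(v-u)}\le\norm{v-u}$ (true for the orthogonal projections realized by $\recompress$ and $\coarsen$, but a property beyond what the lemma states); you instead compare $v$ with the single element $w^*=\bar{P}(u,\alpha\eta)u\in\mathcal{S}_{\bar{r}(u,\alpha\eta)}$, which needs neither linearity nor any non-expansiveness of $\bar P$, so your route is marginally more elementary and slightly more general. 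Your closing caveat about monotonicity of $r\mapsto\hat e_r(v)$ is not actually needed: since $r^*=\bar{r}(u,\alpha\eta)$ itself satisfies $\hat e_{r^*}(v)\le\kappa(1+\alpha)\eta$, the minimal admissible index is automatically at most $r^*$, without any nestedness assumption on the sets $\mathcal{S}_r$.
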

\begin{proof} Given $u,v, \eta>0$ as above, by linearity of $\bar P$, we have for any fixed $\alpha >0$
\begin{align}
\label{step1}
\|v - \bar{P}(u, \alpha\eta) v \|_\cH &\le \|(v-u)- \bar{P}(u,\alpha\eta) (v-u )\|_\cH\nonumber\\
&\qquad  + \|u- \bar{P}(u, \alpha\eta) u\|_\cH \le (1+\alpha)\eta.
\end{align}
On the other hand, we have for any $r$ that
\[
\|v - \hat P_r(v)\|_\cH \le \kappa \inf_{w\in \mathcal{S}_r} \|v- w\|_\cH,
\]
so that by \eqref{step1}, we have for $r=\bar r(v,\alpha \eta)$
\[
\|v - \hat P_{\bar r(v,\alpha\eta)} (v)\|_\cH \le \kappa\inf_{w\in \Scal_{\bar r(v,\alpha\eta)}}\|v-w\|_\cH
\le \kappa(1+\alpha)\eta.
\]
Since, by definition $\hat r$ is the minimal index providing accuracy $\kappa(1+\alpha)\eta$,  we conclude that
$\hat r\le \bar r(v,\alpha\eta)$ so that the second relation in \eqref{rclaim}
follows, while the first relation holds by the triangle inequality.
\end{proof}

Lemma \ref{lem:coars} means roughly the following: suppose one has constructed an explicit finitely parametrized approximation 
$v$ to an unknown $u$  for which an error bound $\eta$ is known. In the present context $v$ is the result of an error-controlled  reduction step
$\Fcal_n$ in \eqref{itertemplate}. Then, {\em re-approximating} the explicitly given $v$ within a judiciously chosen tolerance, which is
somewhat larger than the error bound $\eta$, provides a new approximation $\hat v$ to the unknown $u$ with a slightly worsened 
accuracy but with near-optimal complexity.

More precisely, the approximation complexity of the computed recompressed approximation $\hat v$ in \eqref{rclaim} can be quantified in terms of the corresponding best approximations of $u$. For instance, take $\Sigma_r$ as defined in \eqref{eq:sigmadef}, with a fixed basis $\{ \varphi_\lambda \}_{\lambda \in \N}$, as the approximation sets, where the complexity parameter is now the number of nonzero coefficients $r \in \N$. Both $\bar P$ and $\hat P_r$ correspond to retaining the coefficients of largest absolute value, and $\kappa = 1$. Then if $u = \sum_{\lambda} \bu_\lambda \varphi_\lambda$ with $\bu \in \As$, we have
\[
  	\bar{r}(u, \varepsilon) \le \norm{  \bu }_\As^{\frac1s}  \varepsilon^{-\frac1s},
\]
and thus \eqref{rclaim} gives
$  \norm{  u - \hat v   } \leq \varepsilon $ with $\hat v \in \Sigma_{\hat r}$, where 
\[
  \hat r \leq \left( \frac{2+\alpha}{\alpha} \right)^{\frac1s} \norm{  \bu }_\As^{\frac1s}  \varepsilon^{-\frac1s} .
\]
In this manner, this observation appears first in the context of sparse approximations with respect to a given basis, as in adaptive wavelet schemes \cite{Cohen:01,Cohen:02}. 

In the context of hierarchical tensor representations, $r$ is the maximal entry in the tuple of hierarchical ranks. In the simplest case $d=2$, where $\cH = \cH_1 \otimes\cH_2$, we choose $\Scal_r$ as the elements of rank at most $r$, and $\bar P$, $\hat P_r$ are given by the truncated Hilbert-Schmidt expansion. Then if $\sigma(u) \in \As$, in a completely analogous way we obtain $\norm{  u - \hat v   } \leq \varepsilon $ with $\hat v \in \Scal_{\hat r}$ and 
\[
  \hat r = \rank(\hat v) \leq \left( \frac{2+\alpha}{\alpha} \right)^{\frac1s} \norm{  \sigma(u) }_\As^{\frac1s}  \varepsilon^{-\frac1s} .
\]
In this setting, cases where low-rank approximations are particularly attractive are those where $u$ satisfies
\[
    \inf_{\rank(v) \leq r} \norm{ u - v} \leq C e^{-c r^\beta}
\]
 for some $C,c,\beta >0$, corresponding to exponential-type decay of the singular value sequence $\sigma(u)$. In such cases we instead obtain
\[
  \hat r = \rank(\hat v) \leq \bigl(  c^{-1} \ln (C (2+\alpha) \alpha^{-1}\varepsilon^{-1})\bigr)^{1/\beta}  \lesssim \abs{\ln \varepsilon}^{1/\beta}  .
\]
For higher-order hierarchical tensors, the near-optimal projection $\hat{P}_r$ can be done by the routine $\recompress$ implementing the ${H}$SVD hard thresholding operation where,   by Theorem \ref{thm:hier_tensor_recompression_est}, $\kappa = \sqrt{2 d - 3}$. 
Under an analogous assumption 
\[
   \inf_{\max \rr{r}_\mathbb{E}(v) \leq r} \norm{ u - v} \leq C e^{-c r^\beta},
\]
which means that $u \in \AH{(e^{c r^\beta})_{r\in\N_0}}$, one then obtains
\[
  \hat r = \max \rr{r}_\mathbb{E}(\hat v) \leq \Bigl(  c^{-1} \ln \bigl(C (1+(1+\alpha)\kappa) \alpha^{-1}\varepsilon^{-1}\bigr)\Bigr)^{1/\beta}  .
\]
 
In the application of Lemma \ref{lem:coars} to the control of the componentwise sparsity in each tensor mode, as quantified by the contractions $\pi^{(i)}$ in \eqref{eq:contreval}, $r$ is the total number of nonzero entries $\pi^{(i)}_\lambda$ for $i=1,\ldots,d$ and $\lambda \in \mathcal{I}_i$. Proposition \ref{prop:coarsen} provides the corresponding quasi-best approximation procedure $\hat{P}_r$ realized by $\coarsen$. As shown in \cite{BD}, the two reduction procedures for hierarchical tensor decompositions and componentwise sparsity can be combined to achieve the quasi-optimality property \eqref{rclaim}  simultaneously with respect to both types of approximation.

A corresponding adaptive schemes can be realized as in Algorithm \ref{alg:tensor_opeq_solve} using the routines $\coarsen$ and $\recompress$ 
described in Section \ref{ssec:reduction}, combined with residual approximations using routines 
$\rhs(\bbf;\eta)$, $\apply(\bA,\bv;\eta)$. These latter routines generate, for any given target tolerance $\eta >0$,  
error-controlled finite-rank and finitely supported approximations such that
\be 
\label{routines}
 \|\bbf- \rhs(\bbf;\eta)\|_{\ell_2(\cI)} \le \eta,\quad   \|\bA\bv - \apply(\bA,\bv;\eta)\|_{\ell_2(\cI)}\le \eta .
\ee
In all cases, the accuracy requirements are ensured by a posteriori bounds, {\em independently} of any prior assumptions on or 
knowledge of the given finitely parametrized input. 

\begin{algorithm}[!ht]
\caption{$\quad \mathbf{u}_\varepsilon = \solve(\mathbf{A},
\mathbf{f}; \varepsilon)$} \begin{algorithmic}[1]
\Require $\Bigg\{$\begin{minipage}{12cm}$\omega >0$ and $\rho\in(0,1)$ such that
$\norm{\id - \omega\mathbf{A}} \leq \rho$,\\
$c_\bA \geq \norm{\bA^{-1}}$, $\varepsilon_0 \geq c_\bA \norm{\mathbf{f}}$, \\
$ \kappa_1, \kappa_2, \kappa_3 \in (0,1)$ with $\kappa_1 +
\kappa_2 + \kappa_3 \leq 1$, and $\beta_1 \geq 0$, $\beta_2 > 0$.\end{minipage}
\Ensure $\mathbf{u}_\varepsilon$ satisfying $\norm{\mathbf{u}_\varepsilon -
\mathbf{u}}\leq \varepsilon$.
\State $\mathbf{u}_0 := 0$
\State $k:= 0$, $I := \min\{ j \colon \rho^j (1 + (\omega + \beta_1 + \beta_2) j) \leq
\textstyle \frac12 \displaystyle\kappa_1\}$\label{alg:jchoice} 
\While{$2^{-k} \varepsilon_0 > \varepsilon$}
\State $\mathbf{w}_{k,0}:=\mathbf{u}_k$, $j \gets 0$
\While{$j<I$} \label{alg:cddtwo_looptermination_line}
\State $\eta_{k,j} := \rho^{j+1} 2^{-k} \varepsilon_0$
\State $\mathbf{r}_{k,j} := \apply( \mathbf{w}_{k,j} ; \frac{1}{2}\eta_{k,j})
- \rhs(\frac{1}{2}\eta_{k,j})$ 
\State $\mathbf{w}_{k,j+1} := \coarsen\bigl(\recompress(\mathbf{w}_{k,j} - \omega \mathbf{r}_{k,j} ;
\beta_1 \eta_{k,j}); \beta_2 \eta_{k,j} \bigr)$ \label{alg:tensor_solve_innerrecomp}
\State $j\gets j+1$.
\EndWhile
\State $\mathbf{u}_{k+1} := \coarsen\bigl(\recompress(\mathbf{w}_{k,j};
\kappa_2 2^{-(k+1)} \varepsilon_0) ; \kappa_3 2^{-(k+1)}
\varepsilon_0\bigr)$\label{alg:cddtwo_coarsen_line} 
\State $k \gets k+1$
\EndWhile
\State $\mathbf{u}_\varepsilon := \mathbf{u}_k$ 
\end{algorithmic}
\label{alg:tensor_opeq_solve}
\end{algorithm}
\newcommand{\constsvd}{\sqrt{2d-3}}
\newcommand{\constcrs}{\sqrt{d}}

One step in Algorithm \ref{alg:tensor_opeq_solve} takes the basic form
\be
\label{typical}
\bar\bu \leftarrow \coarsen\Big(\recompress\big(\bar\bu -\omega \br_{\eta_A,\eta_f}(\bar\bu)\big);\eta_{rc}\big);\eta_{coa}\Big),
\ee
where $\br_{\eta_A,\eta_f}(\bar\bu) := \apply(\bA,\bar u;\eta_A) - \rhs(\bbf;\eta_f)$ with certain tolerances $\eta_A,\eta_f, \eta_{rc},\eta_{coa}$ of comparable size and geometric decay.
The algorithm has an inner loop, where steps \eqref{typical} are applied to achieve an error reduction, corresponding to the mapping $\mathcal{F}_n$ in \eqref{itertemplate}. In the outer loop, this is followed by a complexity reduction by $\recompress$ and $\coarsen$ with sufficiently large tolerances, see step 11. As shown in \cite{BD}, Lemma \ref{lem:coars} can be applied to infer joint quasi-optimality of hierarchical ranks and mode frame supports provided that the parameters $\kappa_1,\kappa_2,\kappa_3$, which control the relative size of error and complexity reduction tolerances, are chosen as
\begin{equation}
\begin{gathered}
\kappa_1 = \Bigl(1 + (1+\alpha)\bigl(\constsvd + \constcrs +
  \sqrt{(2d-3)d}\bigr)\Bigr)^{-1}\,, \\
  \kappa_2 = \constsvd\,(1+\alpha) \kappa_1\,,\qquad 
  \kappa_3 = \constcrs\,(\constsvd + 1)(1+\alpha)\kappa_1 \,.	
\end{gathered}
\end{equation}
\begin{figure}
	\centering\includegraphics[width=11cm]{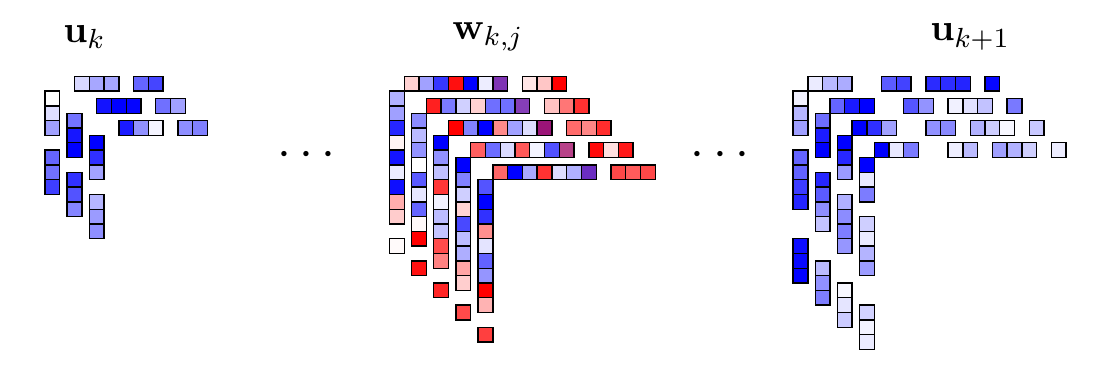}
	\caption{Illustration of Algorithm \ref{alg:tensor_opeq_solve}, for $d=2$ with coefficient arrays determined by sums of outer products of sparse vectors: starting from $\bu_k$, in the inner loop (steps 2--11), ranks are increased and degrees of freedom are activated (shown in red) in $\bw_{k,j}$ for $j=1,\ldots,J$. Small coefficients are removed in the re-approximation step of line \ref{alg:cddtwo_coarsen_line} to obtain $\bu_{k+1}$.}
\end{figure}

\begin{remark}
It is easy to see that for any given tolerance $\varepsilon$, Algorithm \ref{alg:tensor_opeq_solve} terminates after $\mathcal{O}(|\ln \varepsilon|)$ steps,
producing a coefficient array $\bu_\varepsilon$ such that $\|\bu -\bu_\varepsilon\|_{\ell_2(\cI)}\le \varepsilon$.
We emphasize that convergence of the algorithm is guaranteed independently of any additional conditions 
on the unknown solution array $\bu$. The performance of the algorithm, i.e., the numerical cost of computing $\bu_\varepsilon$,
can, of course, only be quantified under certain assumptions on the solution.
\end{remark}

\subsubsection{Complexity bounds in model cases}

Guided by the findings in Section \ref{sec:approximability}, we proceed to formulate the relevant assumptions and {\em benchmark} properties 
for the problem scenarios (I) and (II) for which we will quantify the performance 
of Algorithm \ref{alg:tensor_opeq_solve}. For a detailed account we refer to \cite{BD2,BD3}, in particular regarding the dependence
on $d$ of various constants.

In this regard, similar assumptions are suitable for the model problems in scenario (I) and in scenario (II) with $d<\infty$, since in both cases we expect
\be
\label{bm1}
\bu\in \Acal_\cH(\gamma), \quad \gamma(n)= e^{c n^{\beta}}, \quad\text{for some}\,\,c, \beta>0\,.
\ee
For the sparse approximability in each tensor mode, we may assume
\be
\label{bm2}
\pi^{(i)}(\bu)\in \Acal^s,\quad i=1,\ldots,d,\quad \text{for some $s>0$}.
\ee
Then for any $\varepsilon>0$, the result of Algorithm \ref{alg:tensor_opeq_solve} after the final complexity reduction step satisfies\footnote{If $\beta\leq 1$, the estimate \eqref{eq:complexity_rank} can be quantified more precisely in terms of $\norm{\bu}_{\AH{\gamma}}$; see \cite{BD}.}, as a consequence of Lemma \ref{lem:coars},
\begin{gather} \label{eq:complexity_rank} 
 \max \rr{r}_\mathbb{E}(\bu_\varepsilon)
    \lesssim (\abs{\ln \varepsilon} + \ln d)^{1/\beta} \,,
   \\
 \label{eq:complexity_supp} \sum_{i=1}^d \#\supp(\pi^{(i)}(\bu_\varepsilon)) \lesssim
     d^{1 + s^{-1}} \, \Bigl(\sum_{i=1}^d \norm{ \pi^{(i)}(\bu)}_{\As} \Bigr)^{\frac{1}{s}}
           \varepsilon^{-\frac{1}{s}} \,.
\end{gather}
Note that $\#\supp(\pi^{(i)}(\bu_\varepsilon))$ is the number of degrees of freedom in tensor mode $i$. Thus, in other words, ranks and discretization sizes in the result are {\em quasi-optimal}. Moreover, one has
\begin{gather}
  \label{eq:complexity_ranknorm} 
  \norm{\bu_\varepsilon}_{\AH{\ga_\mathbf{\bu}}}
  \lesssim \sqrt{d}\,
      \norm{\bu}_{\AH{\ga_\mathbf{\bu}}}    \,,   \\
  \label{eq:complexity_sparsitynorm} \sum_{i=1}^d \norm{
  \pi^{(i)}(\bu_\varepsilon)}_{\As} \lesssim d^{1 + \max\{1,s\}} 
      \sum_{i=1}^d \norm{ \pi^{(i)}(\bu)}_{\As}  \,.
\end{gather}
For the proofs of the estimates \eqref{eq:complexity_rank}, \eqref{eq:complexity_supp}, \eqref{eq:complexity_ranknorm}, \eqref{eq:complexity_sparsitynorm}, see \cite{BD,BD2}.
Note that these bounds hold after every execution of the outer loop in Algorithm \ref{alg:tensor_opeq_solve}, with $\varepsilon$ replaced by the accuracy $2^{-k} \varepsilon_0$ of the current iterate $\bu_k$.

On this basis, we obtain estimates for the total computational complexity of the adaptive scheme, which depends mainly on the computational costs of residual approximations in the inner loop. At this point, we need to exploit structural properties of
$\bA$. Specifically, we refer to \cite{BD2,BD3,BCD15} where $s^*$-compressibility of the tensor components of $\bA$  has been established.
The value of $s^*$ depends on diffusion coefficients, the choice of the Riesz-basis, and in scenario (II), on the 
properties of the parametric expansions.
Furthermore, we need to make  some additional technical assumptions
concerning approximability of $\bbf$, including $\bbf \in \AH{\gamma}$. These properties of the problem data can, in principle, be verified.

In case (I), for the bound of the complexity of $\apply(\bA,\bv;\eta)\to \bw_\eta$, we assume in addition that $u\in H^t(\Omega)$ for some $t>1$. One can then bound the ranks $\hat m(\eta,\bv)$ required for the approximate rescaling $\tilde\bS^{-1}_{m(\bv,\eta)}$. Then one obtains estimates of the form 
\be
\label{ranks}
r_i(\bw_\eta)\le (\hat m(\eta,\bv))^2 R r_i(\bv), \quad i=1,\ldots,E,
\ee
where $R$ bounds the representation ranks of $\bT$ (for instance, $R=2$ in the case $A=-\Delta$). Moreover,  whenever $s<s^*$,
\begin{equation}
\label{Acompl}
\begin{gathered}
\#(\supp(\pi^{(i)}(\bw_\eta) ) \lesssim\Big( \sum_{j=1}^d \|\pi^{(i)}(\bv) \|_{\Acal^s}\Big)^{1/s} \eta^{-1/s},\\
 \|\pi^{(i)}(\bw_\eta)\|_{\Acal^s}
 \lesssim d \|\pi^{(i)}(\bv)\|_{\Acal^s},
\end{gathered}
\end{equation}
with hidden constants depending in particular on $\bA$; for details, see \cite[\S6.2]{BD2}. Thus, for a certain approximability range $s<s^*$,
the approximate operator application preserves joint mode frame sparsity.

With \eqref{ranks}, \eqref{Acompl}, and taking into account that the costs of each iteration in the inner loop are dominated by those for the ${H}$SVD of iterates and residual approximations, we arrive at the bound
\begin{equation}\label{totalcomplexity1}
 	\ops(\bu_\varepsilon)\lesssim d^{C_1\ln d} ({1+} \abs{\ln \varepsilon})^{C_2 \ln d + 2 /\beta }\,
       \varepsilon^{-\frac{1}{s}}
\end{equation}
 for the total computational costs of Algorithm \ref{alg:tensor_opeq_solve}, where $C_1,C_2>0$ are constants 
 {\em independent} of $d$ and $\varepsilon$.
Similar (and slightly more favorable) estimates hold for (I) with error control in $L_2$, see \cite{BD3}.

In scenario (II) with $d<\infty$, the situation is similar, but instead of \eqref{ranks} one has the $\eta$-independent bound $r_i(\bw_\eta)\le (d+1) r_i(\bv)$, $i=1,\ldots,E$. As a consequence, the complexity bound improves to
\begin{equation}\label{totalcomplexity2}
 \ops(\bu_\varepsilon)\lesssim d^{C\ln d} \abs{\ln \varepsilon}^{2 / \beta} \varepsilon^{-\frac1s},	
\end{equation}
with $C>0$ independent of $d$ and $\varepsilon$. 

Concerning the dependence on $d$, the bounds \eqref{totalcomplexity1}, \eqref{totalcomplexity2} are in general far from sharp: also in the inner loop of Algorithm \ref{alg:tensor_opeq_solve}, the ranks of iterates typically remain lower than guaranteed by the available upper bounds. However, the estimates \eqref{totalcomplexity1}, \eqref{totalcomplexity2} still show a growth in $d$ that is far below exponential, and thus ensure that low-rank methods can indeed break the curse of dimensionality for these problems also in terms of their total computational cost. 

In scenario (II) with $d=\infty$, we again consider low-rank approximation with a single separation between spatial and parametric degrees of freedom as in \eqref{uelr}. Assuming that the parametric expansion functions $\psi_j$ have multilevel structure according to \eqref{ml},
 one immediately obtains from the results in \cite{BCM:2015} that
$\pi^{(\rs)}(\bu), \pi^{(\rp)}(\bu) \in \Acal^s$ for any $s < \alpha/m$.
In view of the discussion in \ref{sssec:infparam}, for the singular values $\sigma(\bu)$ of $\bu$ this implies $\sigma(\bu) \in \Acal^s$ for any $s < \alpha/m$. Unfortunately, one cannot in general expect stronger summability of $\sigma(\bu)$; that is,  in this setting the singular values of $\bu$ generally decay only \emph{algebraically}.

In this case, as a consequence of Lemma \ref{lem:coars}, one has
\begin{equation}\label{hsrankbound}
		 \rank(\bu_\varepsilon) \lesssim { \varepsilon^{-\frac1{s}}} \norm{\sigma(\bu)}_{\As}^{\frac1{s}}, \quad \norm{\sigma(\bu_\varepsilon)}_{\As} \lesssim \norm{\sigma(\bu)}_{\As},
	\end{equation}
as well as estimates for $\#\supp \pi^{(i)}(\bu_\varepsilon)$, $i=\rs,\rp$, analogous to \eqref{eq:complexity_supp}, \eqref{eq:complexity_sparsitynorm}.

Under certain assumptions on the parametric representation of the diffusion coefficient, the compressibility on $\bA_j$ is quantified in 
\cite{BCD15} based on which    
$\apply(\bv;\eta) \to \bw_\eta$, as in \eqref{eq:bweta}, is shown to 
  exhibit near optimal performance also in this case. Specifically, one has
\[
\begin{gathered}
\rank(\bw_\eta)\lesssim { \eta^{-\frac{1}{s}}}\| \sigma( \bv)\|_{\As}^{\frac{1}{s}} { (1+\abs{\log\eta})^{1/\beta}} 
, \\ 
\| \sigma(\bw_\eta)\|_{\As}\lesssim  \|\sigma(\bv)\|_{\As} { (1+\abs{\log\eta})^{\bar s /\beta }},  
\end{gathered}
\]
as well as
\[
\|\pi^{(\rp)}(\bw_\eta)\|_{\cA^{s}} \lesssim \|\pi^{(\rp)}(\bv)\|_{\cA^{s}} { (1+\abs{\log \eta})^{s / \beta}}
\]
and analogous bounds for $\|\pi^{(\rs)}(\bw_\eta)\|_{\cA^{s}}$. These estimates show 
a logarithmic degradation in the target tolerances. Nevertheless, they allow one to establish the estimate
	\begin{equation}\label{hsopbound}
{\rm ops}(\bu_\varepsilon) \lesssim  {  ( 1 + \abs{\log \varepsilon})^\zeta} \Bigl( { \varepsilon^{-\frac1{s}}} \norm{\sigma(\bu)}_{\As}^{\frac1{s}} \Bigr)^2  \sum_{i\in\{\rs,\rp\}} { \varepsilon^{-\frac1{s}}} \norm{\pi^{(i)}(\bu)}_{\cA^{s}}^{-\frac{1}{s}} ,
	\end{equation}
	where $\zeta>0$ depends on $s$, on $\operatorname{cond}(\bA)$, and on the parameters in Algorithm \ref{alg:tensor_opeq_solve}. Further details can be found in \cite{BCD15}.
Thus, in this case with algebraic decay of singular values, although the arising ranks are quasi-optimal, the quadratic dependence of the cost of the SVD on these ranks leads to substantially less favorable costs of low-rank approximations.  This effect is essentially unavoidable by any low-rank scheme using a separation of spatial and parametric variables.

\begin{remark}
	As outlined in Section \ref{sec:iter}, an alternative to the perturbed Richardson iteration in Algorithm \ref{alg:tensor_opeq_solve} consists in solving a sequence of Galerkin discretizations that are successively refined using residual approximations as considered in Section \ref{sec:residualapprox}. As shown in \cite{AU:18}, such an approach leads to analogous asymptotic complexity bounds, with potential for quantitative improvements in the numerical costs.
\end{remark}

\subsubsection{Soft thresholding}\label{sssec:st}
A second basic approach to control the complexity of iterates in a scheme of the form \eqref{itertemplate} is to choose a reduction operation $\mathcal{R}_n$ which is {\em non-expansive}, that is, $\norm{ \mathcal{R}_n (\bu) - \mathcal{R}_n(\bv) } \leq \norm{ \bu - \bv }$. As a consequence, if $\mathcal{F}_n$ is a contraction, then $\mathcal{R}_n \circ \mathcal{F}_n$ is still a contraction for each $n$. 
This applies, in particular, to the soft thresholding operation, which can be applied entry-wise to sparse expansions, or to the hierarchical singular values of tensors as discussed in Section \ref{ssec:properties}.

Compared to the truncation of the ${H}$SVD by hard thresholding, one has the interesting feature that convergence of the iteration is preserved regardless of the thresholding value. Rather than thresholding with a sufficiently large tolerance whenever a sufficient error reduction has been achieved, one can therefore simply threshold in every iteration step, with parameters that decrease sufficiently slowly. 

Let us take a closer look at this in the case $\mathcal{F}_n = \mathcal{F}$, that is, the underlying iteration is stationary. 
One key observation for establishing quasi-optimality of ranks is that the limit of the iteration with the thresholded mapping $\mfS_\alpha \circ \mathcal{F}$ can be related to the thresholded exact solution.

\begin{lemma}
	Let $\mathcal{F}$ be a contraction with Lipschitz constant $\rho<1$ and unique fixed point $u$, let $\alpha>0$ and let $u^\alpha$ be the unique fixed point of $\mfS_\alpha \circ \mathcal{F}$. Then
	\[
	  (1 + \rho)^{-1} \norm{ \mfS_\alpha( u) - u } \leq \norm{ u^\alpha - u } 
	    \leq ( 1 - \rho)^{-1} \norm{ \mfS_\alpha( u) - u } . 
	\]
\end{lemma}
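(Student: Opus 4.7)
The proof is a direct application of the triangle inequality combined with the two key ingredients: the fixed point identities $\mathcal{F}(u)=u$, $\mathfrak{S}_\alpha(\mathcal{F}(u^\alpha))=u^\alpha$, and the non-expansiveness \eqref{Lip1} of $\mathfrak{S}_\alpha$. The overall strategy is to insert $\mathfrak{S}_\alpha(u)$ (equivalently $\mathfrak{S}_\alpha(\mathcal{F}(u))$) into the expression $u^\alpha - u$ in two different ways to obtain each of the two inequalities.

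For the upper bound, I would start from $u^\alpha - u = \mathfrak{S}_\alpha(\mathcal{F}(u^\alpha)) - u$, then write
\[
  u^\alpha - u = \bigl( \mathfrak{S}_\alpha(\mathcal{F}(u^\alpha)) - \mathfrak{S}_\alpha(\mathcal{F}(u)) \bigr) + \bigl(\mathfrak{S}_\alpha(u) - u\bigr),
\]
using $\mathcal{F}(u)=u$ in the second term. Applying the triangle inequality, non-expansiveness of $\mathfrak{S}_\alpha$, and the contractivity of $\mathcal{F}$ yields
\[
  \|u^\alpha - u\| \leq \rho\,\|u^\alpha - u\| + \|\mathfrak{S}_\alpha(u) - u\|,
\]
which rearranges to $\|u^\alpha - u\| \leq (1-\rho)^{-1} \|\mathfrak{S}_\alpha(u) - u\|$.

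For the lower bound, I would instead decompose
\[
  \mathfrak{S}_\alpha(u) - u = \bigl(\mathfrak{S}_\alpha(\mathcal{F}(u)) - \mathfrak{S}_\alpha(\mathcal{F}(u^\alpha))\bigr) + (u^\alpha - u),
\]
again using $\mathcal{F}(u)=u$ and now $\mathfrak{S}_\alpha(\mathcal{F}(u^\alpha)) = u^\alpha$. The same two estimates give
\[
  \|\mathfrak{S}_\alpha(u) - u\| \leq \rho\,\|u^\alpha - u\| + \|u^\alpha - u\| = (1+\rho)\|u^\alpha - u\|,
\]
which is the desired lower bound. There is no real obstacle here: both bounds are elementary once the non-expansiveness \eqref{Lip1} and the two fixed-point identities are invoked, and the form of the constants $(1\pm\rho)^{-1}$ is exactly what one obtains from the perturbation-of-a-contraction argument. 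The only point requiring mild care is to consistently exploit $\mathcal{F}(u)=u$ so that $\mathfrak{S}_\alpha(u)$ and $\mathfrak{S}_\alpha(\mathcal{F}(u))$ may be freely interchanged inside the Lipschitz estimates.
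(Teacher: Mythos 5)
Your proof is correct, and it follows the same standard argument as the paper's source (the non-expansiveness of $\mfS_\alpha$ from \eqref{Lip1} combined with the two fixed-point identities and the triangle inequality, rearranged to absorb the $\rho\norm{u^\alpha-u}$ term for the upper bound and to collect $(1+\rho)\norm{u^\alpha-u}$ for the lower bound). No gaps; nothing further is needed.
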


Up to mildly dimension-dependent constants (recall that $E=2d-3$), the rank reduction by $\mfS_\alpha$ produces quasi-optimal ranks:
\begin{enumerate}[(i)]
\item In the case of algebraic decay of singular values, that is, $\sigma^{(i)}(u) \in \As$ with $s>0$ for $i=1,\ldots, E$, with $C_{u,s} := \max_i \norm{\sigma^{(i)}(u) }_{\As}$, we have $\norm{\mfS_\alpha (u) - u} \leq E C_{u,s}^{1/(2s+1)} \alpha^{2s/(2s+1)}$ with ranks
\[
\max \rr{r}_\mathbb{E}(\mfS_\alpha(u)) \leq  ( C_{u,s} \alpha^{-1} )^{2/(2s + 1)}.
\]
This means that $\norm{\mfS_\alpha (u) - u)} \leq \varepsilon$ with 
\begin{equation}\label{stquasiopt1}
\max \rr{r}_\mathbb{E}(\mfS_\alpha(u)) \leq (E C_{u,s})^{1/s} \varepsilon^{-1/s}.
\end{equation}
\item If $\sigma^{(i)}_k(u) \leq C e^{-c k^{\beta}}$ for $k\in\N$ with $C,c,\beta>0$, then $\norm{\mfS_\alpha (u) - u} \leq C_1 E ( 1 + \abs{\log \alpha})^{1/(2\beta) } \alpha$ with ranks
\[
 \max \rr{r}_\mathbb{E}(\mfS_\alpha(u)) \leq    ( c^{-1} \abs{\log C \alpha^{-1}})^{1/\beta } \leq C_2 ( 1 + \abs{\log \alpha } )^{1/\beta}.
\]
This gives $\norm{\mfS_\alpha (u) - u)} \leq \varepsilon$, for $\varepsilon<1$, with
\begin{equation}\label{stquasiopt2}
  \max \rr{r}_\mathbb{E}(\mfS_\alpha(u)) \leq C_3 (1 + \abs{\log \varepsilon})^{1/\beta},
\end{equation}
where $C_1,C_2,C_3>0$ depend on $C,c,\beta$.
\end{enumerate}

The link from thresholded approximations to the approximability of exact solutions is provided by the following lemma.
\begin{lemma}\label{thresholdinglemma}
Let $u,v\in\Hcal$, $\alpha > 0$, and $\varepsilon >0$ such that $\norm{u-v}\leq \varepsilon$. 
\begin{enumerate}[{\rm(i)}]
\item If $\sigma^{(i)}(u)\in \As$ with $s>0$ for all $i=1,\ldots,E$, then
\[   r_i \bigl(\mfS_\alpha(v)\bigr)  \leq \frac{4 \varepsilon^2}{\alpha^2} + C_s \bigl( \norm{\sigma^{(i)}(u)}_{\As} \alpha^{-1})^{2/(2s+1)} \,.  \]
\item If $\sigma^{(i)}_k(u) \leq C e^{-c k^\beta}$ for $k\in\N$ with $C, c, \beta> 0$, then
\[   r_i\bigl(\mfS_\alpha(v)\bigr)  \leq \frac{4 \varepsilon^2}{\alpha^2} + \bigl( c^{-1} \ln (2C\alpha^{-1}) \bigr)^{1/\beta} \,.  \]
\end{enumerate}
\end{lemma}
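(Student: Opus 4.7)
The plan is to bound the rank of $\mfS_\alpha(v)$ directly in terms of the distribution of singular values of the matricization $M_i(u)$, exploiting both the perturbation estimate \eqref{singdiff} and the assumed decay of $\sigma^{(i)}(u)$. Writing $r_i(\mfS_\alpha(v)) = \#\{k : \sigma^{(i)}_k(v) > \alpha\}$ (since soft thresholding annihilates singular values $\le \alpha$), I would split this count into
\[
  \#\bigl\{k : \sigma^{(i)}_k(v) > \alpha,\ \sigma^{(i)}_k(u) < \tfrac{\alpha}{2}\bigr\} \;+\; \#\bigl\{k : \sigma^{(i)}_k(u) \ge \tfrac{\alpha}{2}\bigr\} \,=:\, N_1 + N_2.
\]
For the first term, every contributing index satisfies $|\sigma^{(i)}_k(v) - \sigma^{(i)}_k(u)| > \alpha/2$, so Chebyshev's inequality applied to the sequence $\sigma^{(i)}(v) - \sigma^{(i)}(u)$ combined with \eqref{singdiff} yields
\[
  N_1 \,\le\, \frac{4}{\alpha^2}\sum_{k} |\sigma^{(i)}_k(v)-\sigma^{(i)}_k(u)|^2 \,\le\, \frac{4}{\alpha^2}\norm{u-v}^2 \,\le\, \frac{4\varepsilon^2}{\alpha^2}.
\]
This gives the first summand on the right-hand side of both (i) and (ii) and is the same in either case.

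It remains to bound $N_2$, the number of singular values of $M_i(u)$ exceeding $\alpha/2$, using the respective hypothesis.

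For case (i) with $\sigma^{(i)}(u)\in\As$, the key point — and probably the most delicate step — is to derive the sharp pointwise decay $\sigma^{(i)}_k(u) \le C_s' \norm{\sigma^{(i)}(u)}_{\As}\, k^{-s-1/2}$ rather than the naive $k^{-s}$ bound, since the latter produces the wrong exponent. I would obtain this by using monotonicity of $\sigma^{(i)}(u)$ together with
\[
  \tfrac{k}{2}\,\bigl(\sigma^{(i)}_k(u)\bigr)^2 \,\le\, \sum_{j>\lfloor k/2\rfloor}\bigl(\sigma^{(i)}_j(u)\bigr)^2 \,\le\, \norm{\sigma^{(i)}(u)}_{\As}^2\,\bigl(\lfloor k/2\rfloor\bigr)^{-2s},
\]
where the outer bound follows from the definition \eqref{Asdef} applied to the best $\lfloor k/2\rfloor$-term approximation of the decreasing sequence $\sigma^{(i)}(u)$. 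Inverting the inequality $\sigma^{(i)}_k(u) > \alpha/2$ then yields $N_2 \le C_s (\norm{\sigma^{(i)}(u)}_{\As}\alpha^{-1})^{2/(2s+1)}$, and summing with $N_1$ gives (i).

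For case (ii), the exponential hypothesis $\sigma^{(i)}_k(u)\le Ce^{-ck^\beta}$ gives directly
\[
  \sigma^{(i)}_k(u) \ge \tfrac{\alpha}{2} \;\Longrightarrow\; k \le \bigl(c^{-1}\ln(2C\alpha^{-1})\bigr)^{1/\beta},
\]
so $N_2 \le (c^{-1}\ln(2C\alpha^{-1}))^{1/\beta}$, and again combining with $N_1$ produces the stated bound. The main obstacle throughout is case (i), where replacing the obvious $k^{-s}$ bound by the sharp $k^{-s-1/2}$ bound is essential to recover the exponent $2/(2s+1)$; the rest is essentially bookkeeping using \eqref{singdiff} and the variational characterization of soft thresholding.
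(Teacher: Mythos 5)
Your argument is correct in substance and is essentially the standard route (the survey itself states this lemma without proof, citing \cite{BS:17}, and the proof there follows the same splitting): bound the number of $\sigma^{(i)}_k(v)$ above $\alpha$ by the number of indices where $\sigma^{(i)}_k(u)\ge \alpha/2$ plus the number where $|\sigma^{(i)}_k(v)-\sigma^{(i)}_k(u)|>\alpha/2$, control the latter by Chebyshev together with the Mirsky-type estimate \eqref{singdiff}, and control the former by the pointwise decay $\sigma^{(i)}_k(u)\lesssim \norm{\sigma^{(i)}(u)}_{\As}\,k^{-s-1/2}$ (your derivation of this sharpened decay from monotonicity and the $\ell_2$-tail definition \eqref{Asdef} is exactly the right move, and it is what produces the exponent $2/(2s+1)$); the exponential case is immediate. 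The constant bookkeeping ($\lfloor k/2\rfloor$ versus $\lfloor k/2\rfloor+1$) is harmless.

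The one step you state too casually is the identity $r_i(\mfS_\alpha(v))=\#\{k:\sigma^{(i)}_k(v)>\alpha\}$. This equality is exact only for the single operation $\mfS_{i,\alpha}$; the operator $\mfS_\alpha$ in \eqref{Seta} is the composition over all $E$ matricizations, so by the time the $i$-th thresholding acts, the input has been modified, and the later factors act as well. What you need (and all you need) is the inequality $r_i(\mfS_\alpha(v))\le\#\{k:\sigma^{(i)}_k(v)>\alpha\}$, which rests on the fact that each factor $\mfS_{j,\alpha}$ does not increase any singular value $\sigma^{(i)}_k$ for $i\neq j$. This is not automatic: an arbitrary non-expansive map on $\cH$ can increase individual singular values of a matricization, so one must use the specific structure of soft thresholding. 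It does hold here because, writing $v=\sum_k\sigma^{(j)}_k u^{(j)}_k\otimes w^{(j)}_k$, the reduced Gram operator of $M_i$ applied to $\mfS_{j,\alpha}(v)$ is $\sum_k\bigl(\sigma^{(j)}_k-\alpha\bigr)_+^2\rho_k$ versus $\sum_k\bigl(\sigma^{(j)}_k\bigr)^2\rho_k$ for $v$, with the same positive semidefinite $\rho_k$, so it decreases in the Loewner order and all its eigenvalues decrease; this monotonicity is established in \cite{BS:17}. With that observation inserted (and the equality replaced by an inequality), your proof is complete and matches the known argument.
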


To obtain an iterative method with quasi-optimal ranks for all iterates, it thus suffices to ensure that the first terms of order $\varepsilon^2/\alpha^2$ in the above estimates remain comparable to the respective second terms by decreasing $\alpha$ sufficiently slowly.

For elliptic problems in well-conditioned representations $\bA \bu = \bbf$, this can be realized using a Richardson iteration \eqref{Richardson}. 
Let $\omega >0$ be such that $\xi := \norm{ I - \omega \bA} < 1$.
The basic iterative method applied to the present problem has the form
\begin{equation}\label{stiter1}
   \bu^{n+1} = \mfS_{\alpha_n} \bigl( \bu^n - \omega ( \bA \bu^n - \bbf) \bigr), \quad n\geq 0,
\end{equation}
with $\bu^0 =0$ and $\alpha_n \to 0$ determined (according to \cite[Alg.\ 2]{BS:17}) as follows: set $\alpha_0 = \omega\norm{\bbf}_2/(d-1)$, and for a fixed $\bar B > \norm{\bA}$, take
\begin{equation}\label{stiter2}
 \alpha_{n+1} = \begin{cases}
 	  \frac12 \alpha_n, & \text{if } \norm{\bu^{n+1} - \bu^n}_2\leq \frac{1 - \xi }{\xi \bar B} \, \norm{\bA \bu^{n+1} - \bbf},\\
 	  \alpha_n, &\text{else.}
 \end{cases}
\end{equation}

As shown in \cite{BS:17}, the scheme given by \eqref{stiter1}, \eqref{stiter2}, converges linearly and each iterate $\bu^n$ satisfies quasi-optimal rank estimates of the form \eqref{stquasiopt1} or \eqref{stquasiopt2} provided that the exact solution $\bu$ has the corresponding approximability. 

This is potentially stronger than the result for hard thresholding, where one cannot rule out that ranks in the inner loop 
of Algorithm \ref{alg:tensor_opeq_solve} between steps 3 and 11  cumulatively increase  due to repeated (approximate) application of $\bA$
before being reduced to near-optimality in step 11.
In contrast, in \eqref{stiter1}, \eqref{stiter2}, the iterates are returned to quasi-optimality after {\em every single application} of $\bA$. However, in the form given above and in \cite{BS:17}, the soft thresholding method still assumes a {\em fixed discretization} (or an idealized iteration on the full sequence space) and does not yet incorporate adaptive discretizations.

\section{Conclusions and Outlook}

The methods discussed in this article rest on two conceptual pillars. First, they invoke strategies for estimating errors with respect to the underlying continuous problem in highly nonlinear approximations of solutions of high-dimensional PDEs, combining adaptive discretizations with low-rank expansions. Exploiting the mapping properties of the underlying continuous operator is essential. These strategies are based on the approximate {\em evaluation of residuals} in function spaces and are guaranteed to remain computationally feasible even in very high-dimensional settings as they progress from coarse to fine with certified accuracy at each stage.

Second, they employ basic {\em complexity reduction mechanisms} for ensuring quasi-optimality of computed approximations That is, they ensure that their representation complexities remain comparable to those of corresponding best approximations of the same accuracy.
This can be achieved, in particular, by the truncation of the hierarchical singular value decomposition up to a judiciously chosen tolerance. 
An alternate strategy is based on soft thresholding, which has the advantage of preserving the convergence of iterative methods for any thresholding parameter.

The combination of the approximate residual evaluations with such recompression strategies enables the construction of iterative methods that converge to the exact solutions of the continuous problem with near-optimal computational costs. As we have noted, soft thresholding can,
in principle, give slightly stronger bounds on the total computational complexity, since the ranks of all iterates are under control, and the bounds are therefore less dependent on the ranks of operators. This approach, however, has not yet been combined with adaptive discretizations. 

We have confined the discussion to highlighting the essential conceptual mechanisms. Corresponding findings are illustrated  by first numerical experiments in \cite{BD2,BD3,BCD15}; in particular, comparisons with other methods are given in \cite{BD3}. 
While these experiments confirm the near-optimal asymptotic complexity bounds for the resulting methods, much room 
is certainly left  for   optimizing corresponding 
concrete implementations with regard to quantitative practical performance.

Concerning the basic construction of solvers, there is a variety of methods that follow a quite different philosophy in using minimization principles for optimizing tensor decompositions for {\em fixed discretizations}, for instance
 ALS \cite{hrs-als,white-als,ru}, DMRG \cite{vidal,white}, AMEn \cite{DoSa:14}, or Riemannian optimization methods \cite{KSV:2016}. Comparably little is known, however, on their global convergence properties, let  alone the total computational complexity of such methods in relation
 to the output accuracy. As a possible further direction, they could, however, serve as additional inner iterations for accelerating the convergence of error-controlled methods as considered here.
	
	Here we have concentrated on the application to linear operator equations. For other problem classes, such as eigenvalue problems or time-dependent problems, only rather preliminary results on error-controlled low-rank methods are available, see, e.g., \cite{B,AndreevTobler:2015}. Many additional challenges in the application of the basic principles discussed here to such problems  remain open.

\bibliographystyle{amsplain}
\bibliography{Bachmayr_Dahmen_Survey}

\end{document}